\numberwithin{equation}{section}
\newtheorem{theorem}{Theorem}[section]
\newtheorem{lemma}[theorem]{Lemma}
\theoremstyle{definition}
\newtheorem{definition}[theorem]{Definition}
\theoremstyle{remark}
\newtheorem{remark}[theorem]{Remark}
\begin{document}
\baselineskip=15pt

\newcommand{\la}{\langle}
\newcommand{\ra}{\rangle}
\newcommand{\psp}{\vspace{0.4cm}}
\newcommand{\pse}{\vspace{0.2cm}}
\newcommand{\ptl}{\partial}
\newcommand{\dlt}{\delta}
\newcommand{\sgm}{\sigma}
\newcommand{\al}{\alpha}
\newcommand{\be}{\beta}
\newcommand{\G}{\Gamma}
\newcommand{\gm}{\gamma}
\newcommand{\vs}{\varsigma}
\newcommand{\Lmd}{\Lambda}
\newcommand{\lmd}{\lambda}
\newcommand{\td}{\tilde}
\newcommand{\vf}{\varphi}
\newcommand{\yt}{Y^{\nu}}
\newcommand{\wt}{\mbox{wt}\:}
\newcommand{\rd}{\mbox{Res}}
\newcommand{\ad}{\mbox{ad}}
\newcommand{\stl}{\stackrel}
\newcommand{\ol}{\overline}
\newcommand{\ul}{\underline}
\newcommand{\es}{\epsilon}
\newcommand{\dmd}{\diamond}
\newcommand{\clt}{\clubsuit}
\newcommand{\vt}{\vartheta}
\newcommand{\ves}{\varepsilon}
\newcommand{\dg}{\dagger}
\newcommand{\tr}{\mbox{Tr}}
\newcommand{\ga}{{\cal G}({\cal A})}
\newcommand{\hga}{\hat{\cal G}({\cal A})}
\newcommand{\Edo}{\mbox{End}\:}
\newcommand{\for}{\mbox{for}}
\newcommand{\kn}{\mbox{ker}}
\newcommand{\Dlt}{\Delta}
\newcommand{\rad}{\mbox{Rad}}
\newcommand{\rta}{\rightarrow}
\newcommand{\mbb}{\mathbb}
\newcommand{\lra}{\Longrightarrow}
\newcommand{\X}{{\cal X}}
\newcommand{\Y}{{\cal Y}}
\newcommand{\Z}{{\cal Z}}
\newcommand{\U}{{\cal U}}
\newcommand{\V}{{\cal V}}
\newcommand{\W}{{\cal W}}
\newcommand{\sta}{\theta}
\setlength{\unitlength}{3pt}
\newcommand{\msr}{\mathscr}
\newcommand{\wht}{\widehat}

\begin{center}{\large \bf Full Conformal Oscillator Representations of Orthogonal \\ \pse Lie Algebras and Combinatorial Identities} \footnote{2010 Mathematical Subject
Classification. Primary 17B10; Secondary 05A19.}
\end{center}
\vspace{0.2cm}

\begin{center}{\large Zhenyu Zhou$^{a,\:}$\footnote{Corresponding author.} and Xiaoping Xu$^{b,\;c,\;}$\footnote{Research supported
 by National Key R\&D Program of China 2020YFA0712600.
} }\end{center}

\begin{center}{
$^{a\;}$Chern Institute of Mathematics \& LPMC, Nankai University,\\ Tianjin 300071, China\\
$^{b\;}$HLM, Institute of Mathematics, Academy of Mathematics \& System
Sciences\\ Chinese Academy of Sciences, Beijing 100190, P.R. China
\\ $^{c\;}$School of Mathematics, University of Chinese Academy of Sciences,\\ Beijing 100049, P.R. China}
\end{center}

\begin{abstract}
\quad

 Zhao and the second author (2013) constructed a functor from
$\mathfrak{o}(k)$-{\bf Mod} to $\mathfrak{o}(k+2)$-{\bf Mod}. In this paper, we use the functor successively to obtain
an  universal first-order differential operator realization  for any highest-weight
 representation of $\mathfrak{o}(2n+3)$ in $(n+1)^2$ variables and that of $\mathfrak{o}(2n+2)$ in $n(n+1)$ variables. When the highest weight is  dominant integral,
 we determine the corresponding finite-dimensional irreducible module explicitly.  One can use the result to study tensor decompositions of finite-dimensional irreducible modules by solving certain first-order linear partial differential equations, and thereby obtain the corresponding physically interested Clebsch-Gordan coefficients and  exact solutions of Knizhnik-Zamolodchikov equation in WZW model of conformal field theory. We also find an equation of counting the dimension of an irreducible $\mathfrak{o}(k+2)$-module in terms of certain alternating sum of the dimensions of  irreducible $\mathfrak{o}(k)$-modules.  In the case of the Steinberg modules, we obtain new  combinatorial identities of classical type.

 \vspace{0.3cm}

\noindent{\it Keywords}:\hspace{0.3cm} orthogonal Lie
algebra; conformal oscillator
 representation; irreducible module;  singular vectors; combinatorial identities.
	
\end{abstract}

\section{Introduction}
	
Finite-dimensional irreducible representations of finite-dimensional simple Lie algebra over $\mbb C$ were abstractly determined
by Cartan and Weyl in early last century. However, explicit representation formulas of the root vectors in these simple algebras are in general very difficult to be given. In 1950, Gelfand and Tsetlin \cite{GT1, GT2}
used a sequence of corank-one subalgebras to obtain a basis whose elements were labeled by upside-down triangular data for finite-dimensional irreducible representations of general linear Lie algebras and orthogonal Lie algebras, respectively. Moreover, the corresponding matrix elements of simple root vectors (or Chevalley basis) were explicitly given. Zhelobenko \cite{DPZ1,DPZ2} derived the well-known branching rules for all classical Lie algebras by the methods that the representations are realized in a space of polynomials satisfying a certain indicator system of differential equations. Moreover, he constructed the lowering operators to depict the matrix element formulas for the general linear Lie algebra. His work were extended to the orthogonal Lie algebras by Nagel and Moshinsky \cite{N1}, Pang and Hechi \cite{P1} and Wong \cite{W1}.
Based on the theory of Mickelsson algebras and the representation theory of the Yangians, Molev \cite{M1,M2,M3,M4} constructed a weight basis for finite-dimensional irreducible representations of classical Lie algebras and obtained explicit formulas for the matrix elements of generators. There are also many interesting works  in this direction (e.g., cf. \cite{C,FGR1,FGR2,G1,G2,G3,GK,Lp,M1}).

 Sum-product type identities are important objects both in combinatorics and number theory. The Jacobi triple and
quintuple product identities are such well-known examples. Macdonald \cite{Mi} used affine analogues of the root systems of finite-dimensional simple Lie algebras to derive new type sum-product identities with the above two identities as special cases. Kac \cite{K} found the character formula for the integrable modules of affine
Kac-Moody algebras and showed that  the Macdonald identities are exactly the denominator identities.
The denominator identities of finite-dimensional simple Lie algebras are Vandermonde determinant type identities, which do not produce sum-product identities of numbers.

 Using the conformal oscillator representation of the orthogonal Lie algebra $\mathfrak{o}(k+2)$ and Shen's mixed
product for Witt algebras in \cite{S}, Zhao and the second author \cite{XZ} constructed a functor from
$\mathfrak{o}(k)$-{\bf Mod} to $\mathfrak{o}(k+2)$-{\bf Mod}. In this paper, we use the functor successively to obtain
an  universal first-order differential operator realization  for any highest-weight
 representation of $\mathfrak{o}(2n+3)$ in $(n+1)^2$ variables and that of $\mathfrak{o}(2n+2)$) in $n(n+1)$ variables. In comparison with Gelfand and Tsetlin's local matrix elements of simple root vectors in \cite{GT2}, we obtain the precise global pictures  of all root vectors.  When the highest weight is  dominant integral, we determine the corresponding finite-dimensional irreducible module explicitly.  Our construction also gives rise to an equation of counting the dimension of an irreducible $\mathfrak{o}(k+2)$-module in terms of certain alternating sum of the dimensions of  irreducible $\mathfrak{o}(k)$-modules. It could be viewed
  as a finite analogue of the Macdonald identities in a certain sense.
  In the case of the Steinberg modules, we obtain new nice combinatorial identities of classical type, which might be viewed as finite analogues
 of the Jacobi triple and quintuple product identities.

 In physics, the Clebsch-Gordan  coefficients are numbers that arise in angular momentum coupling in quantum mechanics. They appeared as the expansion
coefficients of total angular momentum eigenstates in an uncoupled tensor product basis (e.g., cf. \cite{dS,E}). We refer \cite{A, B,L,R} for more applications and later developments. Mathematically, the numbers are those of explicitly determining the irreducible components in the tensor of two finite-dimensional irreducible modules in terms of orthonormal bases.
 Knowing the matrix elements of simple roots in \cite{GT1, GT2} is not enough to solve the general decomposition problem of
the tensors of irreducible representations because they are not commuting operators.
One can use our result in this paper to study the problem for orthogonal Lie algebras by solving certain first-order linear partial differential equations, and thereby obtain the corresponding physically interested Clebsch-Gordan coefficients and  exact solutions of Knizhnik-Zamolodchikov equation in WZW model of two-dimensional conformal field theory (cf. \cite{KZ, TK, X1}). Indeed, our result may also be helpful in constructing higher-dimensional conformal field theory.

  Let $n\geqslant 1$ be an integer, we define the inner product $(\cdot,\cdot)$ on $2n+1$ dimensional Euclid space by
	\begin{eqnarray}
		(\vec{x},\vec{y})=\sum_{i=-n}^nx_iy_{-i},\label{1.1}
	\end{eqnarray}
	where $\vec{x}=(x_0,x_1,\cdots,x_n,x_{-1},\cdots,x_{-n}),~\vec{y}=(y_0,y_1,\cdots,y_n,y_{-1},\cdots,y_{-n})$. The conformal group with respect to $(\cdot,\cdot)$ is given by the translations, rotations, dilations and special conformal transformations
	\begin{eqnarray}
		\vec{x}\longmapsto\frac{\vec{x}-(\vec{x},\vec{x})\vec{b}}{(\vec{b},\vec{b})(\vec{x},\vec{x})-2(\vec{b},\vec{x})+1}. \label{1.2}
	\end{eqnarray}
	
	 Assuming that the base field is the field $\mathbb{C}$ of complex numbers in the rest of this paper, we will deal with odd and even orthogonal Lie algebras simultaneously in the following paragraphs. Now we take the integer $n\geqslant 0$. Let $E_{i,j}$ be the $(2n+3)\times (2n+3)$ or $(2n+2)\times (2n+2)$ matrix with $1$ as its $(i,j)$-entry and $0$ as the others. Denote
	\begin{eqnarray}
		&A_{i,j}=-A_{-j,-i}=E_{i,j}-E_{n+1+j,n+1+i},\label{1.3}\\
		&A_{i,-j}=E_{i,n+1+j}-E_{j,n+1+i},~A_{-i,j}=E_{n+1+i,j}-E_{n+1+j,i},\\
		&A_{0,i}=-A_{-i,0}=E_{0,i}-E_{n+1+i,0},~A_{0,-i}=-A_{i,0}=E_{0,n+1+i}-E_{i,0},~A_{0,0}=0.\label{1.4}
	\end{eqnarray}
	for $1\leqslant i,j\leqslant n+1$. The the odd and even orthogonal Lie algebras
	\begin{eqnarray}
			&\mathfrak{o}(2n+3)\cong\sum\limits_{i,j=1}^{n+1}\mathbb{C}A_{i,j}+\sum\limits_{1\leqslant i< j\leqslant n+1}\mathbb{C}(A_{i,-j}+\mathbb{C}A_{-i,j})+\sum\limits_{i=1}^{n+1}(\mathbb{C}A_{0,i}+\mathbb{C}A_{0,-i}), \label{1.5}\\
			&\mathfrak{o}(2n+2)\cong\sum\limits_{i,j=1}^{n+1}\mathbb{C}A_{i,j}+\sum\limits_{1\leqslant i< j\leqslant n+1}\mathbb{C}(A_{i,-j}+\mathbb{C}A_{-i,j}).\label{1.6}
	\end{eqnarray}
	Let $\mathscr{A}=\mathbb{C}[x_n,x_{n-1},\cdots,x_{-n}]$ be the polynomial algebra in $2n+1$ variables in the odd case and let $\mathscr{A}=\mathbb{C}[x_{\pm 1},x_{\pm_2},\cdots,x_{\pm n}]$ in the even case, respectively. Set
	\begin{eqnarray}
		&D_{2n+1}=\sum\limits_{s=-n}^nx_s\partial_{x_s},\quad\eta_{2n+1}=\frac{1}{2}x_0^2+\sum\limits_{t=1}^nx_tx_{-t}, \label{1.7}\\
		&D_{2n}=\sum\limits_{|s|=1}^nx_s\partial_{x_s},\quad\eta_{2n}=\sum\limits_{t=1}^nx_tx_{-t}.\label{1.8}
	\end{eqnarray}
	Differentiating the transformation in (\ref{1.2}), we get an inhomogeneous first-order differential operators representations of $\mathfrak{o}(2n+3)$. Replace $2n+1$ by $2n$ and take the product $(\vec{x},\vec{y})=\sum\limits_{|i|=1}^nx_iy_{-i}$ in (\ref{1.2}). Again by differentiating the transformation, we get the counterpart of $\mathfrak{o}(2n+2)$.
Let $\mathscr{G}_n=\mathfrak{o}(2n+1),\;\mathscr{G}_{n+1}=\mathfrak{o}(2n+3)$ in the odd case, and let $\mathscr{G}_n=\mathfrak{o}(2n),\;\mathscr{G}_{n+1}=\mathfrak{o}(2n+2)$ in the even case.
Suppose that $M$ is a $\mathscr{G}_n$-module. Denote
	\begin{eqnarray}
		\widehat{M}=\mathscr{A}\otimes_{\mathbb{C}} M. \label{1.9}
	\end{eqnarray}
	We fix $c\in \mbb C$. Using the above representation and Shen's mixed product for Witt algebra in \cite{S}, Zhao and the second author \cite{XZ} obtained the following representation of $\mathscr{G}_{n+1}$ on $\widehat{M}$: if $\mathscr{G}_{n+1}=\mathfrak{o}(2n+2)$,
	\begin{eqnarray}
		&A_{i,j}|_{\widehat{M}}=(x_i\partial_{x_j}-x_{-j}\partial_{x_{-i}})\otimes \text{Id}_M+\text{Id}_\mathscr{A}\otimes A_{i,j}, \label{1.10e}\\
		&A_{n+1,i}|_{\widehat{M}}=\partial_{x_i}\otimes \text{Id}_{M},~A_{n+1,n+1}|_{\widehat{M}}=-(D_{2n}+c)\otimes \text{Id}_M, \label{1.11e}\\
		&A_{i,n+1}|_{\widehat{M}}=(-x_i(D_{2n}+c)+\eta_{2n}\partial_{x_{-i}})\otimes \text{Id}_M- \sum\limits_{r=-n}^nx_r\otimes A_{i,r}. \label{1.12e}
	\end{eqnarray}
	for $1\leqslant |i|,|j|\leqslant n$; and if $\mathscr{G}_{n+1}=\mathfrak{o}(2n+3)$,
	\begin{eqnarray}
		&A_{i,j}|_{\widehat{M}}=(x_i\partial_{x_j}-x_{-j}\partial_{x_{-i}})\otimes \text{Id}_M+\text{Id}_\mathscr{A}\otimes A_{i,j}, \label{1.10}\\
		&A_{n+1,i}|_{\widehat{M}}=\partial_{x_i}\otimes \text{Id}_{M},~A_{n+1,n+1}|_{\widehat{M}}=-(D_{2n+1}+c)\otimes \text{Id}_M, \label{1.11}\\
		&A_{i,n+1}|_{\widehat{M}}=(-x_i(D_{2n+1}+c)+\eta_{2n+1}\partial_{x_{-i}})\otimes \text{Id}_M-\sum\limits_{r=-n}^nx_r\otimes A_{i,r}. \label{1.12}
	\end{eqnarray}
	for $0\leqslant |i|,|j|\leqslant n$. Moreover
	\begin{eqnarray}
		\overline{M}=U(\mathscr{G}_{n+1})(1\otimes M) \label{1.13}
	\end{eqnarray}
	forms a $\mathscr{G}_{n+1}$-submodule. Denote by $\mathscr{A}_r$ the subspace of homogeneous polynomials in $\mathscr{A}$ with degree $r$. Set
	\begin{eqnarray}
		\widehat{M}_r=\mathscr{A}_r\otimes_{\mathbb{C}}M,~~\overline{M}_r=\overline{M}\bigcap \widehat{M}_r. \label{1.14}
	\end{eqnarray}
	According to (\ref{1.10e})-(\ref{1.12}), $\widehat{M}_r$ is the tensor module of the $\msr G_n$-module $\mathscr{A}_r$ and $M$, and $\overline{M}_r$ is a $\mathscr{G}_{n}$-submodule of $\widehat{M}_r$. If $M$ is an irreducible $\mathscr{G}_{n}$-module, then $\overline{M}$ is an irreducible $\mathscr{G}_{n+1}$-module. When $M$ is a finite-dimensional irreducible $\mathscr{G}_{n}$-module,  Zhao and the second author \cite{XZ} found a sufficient condition for $\widehat{M}$ to be an irreducible $\mathscr{G}_{n+1}$-module, that is, $\widehat{M}=\overline{M}$.
	
	When $M$ is a highest-weight irreducible $\mathscr{G}_{n}$-module,
	\begin{eqnarray}
		\overline{M}=V_{n+1}(\lambda) \label{1.15}
	\end{eqnarray}
	is a highest-weight irreducible $\mathscr{G}_{n+1}$-module. Indeed, $\lambda$ can be any dominant integral weight of $\mathscr{G}_{n+1}$. In particular, when $M=\mbb Cv_0$ is the one-dimensional trivial $\msr G_n$-module, we identify
$\widehat{M}$ with $\msr A$ by: $f\otimes v_0\leftrightarrow f$ for $\forall f\in \msr A$. Suppose $c=-k$ and $k$ is an nonnegative integer. Then
$\overline{M}=\msr A_0+\sum_{i=1}^k (\msr A_i+\eta^i\msr
A_{k-i})$ is an irreducible $\msr G_{n+1}$-module with
highest weight $k\lmd_1$ (cf. \cite{X2}), where $\eta=\eta_{2n}$ in the even case and $\eta=\eta_{2n+1}$ in the odd case. Our early initial motivation is to generalize the above result.

Starting from a  first-order differential operator representation of $\mathfrak{o}(3)$ in odd case and $\mathfrak{o}(4)$ in even case, respectively, we apply (\ref{1.9})-(\ref{1.12}) inductively to obtain
an  universal first-order differential operator realization  for any highest-weight
 irreducible representation of $\mathfrak{o}(2n+3)$ in $(n+1)^2$ variables (cf. Theorem \ref{T3}) and that of $\mathfrak{o}(2n+2)$ in $n(n+1)$ variables (cf. Theorem \ref{Th1}).
  When the highest weight is dominant integral, we determine the corresponding finite-dimensional irreducible module of $\mathfrak{o}(2n+3)$ explicitly in Theorem \ref{T8} and that of $\mathfrak{o}(2n+2)$ explicitly in Theorem \ref{Th2}.

Let $\mbb R^m$ be the natural Euclidean space with the standard basis $\{\ves_1,\ves_2,...,\ves_m\}$.  Denote by $\mbb N$ the set of nonnegative integers. In this paper, we take the set of positive roots of $o(2m+1)$ as
  \begin{equation} \Phi_{2m+1}^+=\{\ves_i,\ves_r\pm \ves_s\mid i\in\ol{1,m},\;1\leq s<r\leq m\}\end{equation}
  and then the set of dominant integral weights is
  \begin{equation} \Lmd_{2m+1}^+=\{\sum_{i=1}^m\ell_i\ves_i\mid\ell_s\in \mbb N/2,\;s\in\ol{1,m};\;\ell_{r+1}-\ell_r\in\mbb N,\;r\in\ol{1,m-1}\}.\end{equation}
  Denote by $d_k(\lmd)$ the dimension of the finite-dimensional irreducible $\mathfrak{o}(k)$-module with highest weight $\lmd$.

   Suppose that $\ol{M}$ is a finite-dimensional irreducible $\mathfrak{o}(2n+3)$-module with highest weight $\lmd=\sum_{i=1}^{n+1}\mu_i\ves_i\in  \Lmd_{2n+3}^+$.  In order to find $\dim\ol{M}_r$, we have used the inclusion-exclusion principle to calculate $\dim \wht{M}_r/\ol{M}_r$. Then we get
   $\dim\ol{M}_r$ and sum up to lead the following identities
   \begin{eqnarray}& &2\sum_{s=2}^{n+2}(-1)^{n-s}\binom{n+s+\frac{1}{2}+\mu_{s+1}}{2n+1} d_{2n+1}(\sum\limits_{i=1}^s\mu_i\varepsilon_i+\!\!\sum\limits_{i=s+1}^n(\mu_{i+1}+1)\varepsilon_i)\nonumber\\ &=& d_{2n+3}(\sum_{i=1}^{n+1}\mu_i\ves_i)  \end{eqnarray}
if $2\mu_{n+1}$ is odd, and
\begin{eqnarray}& &\sum\limits_{s=0}^n(-1)^{n-s}\binom{n\!+\!s\!+\!\mu_{s+1}}{2n}\frac{2s\!+\!2\mu_{s+1}\!+\!1}{2n\!+\!1} d_{2n+1}(\sum\limits_{i=1}^s\mu_i\varepsilon_i+\!\!\sum\limits_{i=s+1}^n(\mu_{i+1}+1)\varepsilon_i)\nonumber\\&=&
d_{2n+3}(\sum_{i=1}^{n+1}\mu_i\ves_i)
         \end{eqnarray}
when $2\mu_{n+1}$ is even. Note that Macdonald identities are of the form of an alternating sum equal to a product (cf. \cite{Mi}), and $d_k(\lmd)$ is also a product by Weyl's character formula (e.g., cf. the  corollary in Subsection 24.3 of \cite{H}). The above identities can be viewed as finite analogues of the Macdonald identities.

   Denote by $\lmd_i$ the $i$th fundamental weight.  When $\lmd=k\sum_{i=1}^{n+1}\lmd_i=\sum_{i=1}^{n+1}(i-1/2)k\ves_i$, $\ol{M}$ is a Steinberg module and the above equations are equivalent to the following identities:
   \begin{eqnarray}
	      	2\sum_{s=0}^{n}(-1)^{n-s}\binom{n+(s+\frac{1}{2})(k+1)}{2n+1}\binom{2n+1}{n-s}=(k+1)^{2n+1}
	      \end{eqnarray}
	  if  $k$ is odd, and
\begin{eqnarray}
	      	\sum\limits_{s=0}^{n}(-1)^{n-s}\binom{n-\frac{1}{2}+(s+\frac{1}{2})(k+1)}{2n}\binom{2n+1}{n-s}\frac{2s+1}{2n+1}=(k+1)^{2n}
	      \end{eqnarray}
    when $k$ is even. We have checked an authoritative encyclopedia of combinatorial identities \cite{Ri}, the above identities are not there.
They can be viewed as finite analogues of the  Jacobi triple and quintuple product identities. Furthermore, we have also found similar identities (\ref{4.151}),(\ref{4.152}) and (\ref{4.163}) for $\mathfrak{o}(2n+2)$.

The paper is organized as follows. In section 2, we derive the  universal first-order differential operator realization  for any highest-weight
 irreducible representation of $\mathfrak{o}(2n+3)$ and determine the corresponding finite-dimensional irreducible module of $\mathfrak{o}(2n+3)$ explicitly when the
highest weight is dominant integral. More detailed information is given for the weights $k_1\lmd_1+k_2\lmd_2$ and $k\lmd_i$. Similar results for $\mathfrak{o}(2n+2)$ are obtained in Section 3. In Section 4, we find all the $\msr G_n$-singular vectors in $\wht M_r$ and $\ol{ M}_r$. Then we use them and inclusion-exclusion principle to derive finite analogues of the Macdonald identities and the identities as those in (1.24) and (1.25).

Through out this paper we make the following conventions: Let $\mathbb{C}$, $\mathbb{R}$, $\mathbb{Z}$ and $\mathbb{N}$ denote the complex number field, the real number field, the ring of integers and the set of nonnegative integers respectively. For $i,j\in\mathbb{Z}$, denote the set $\{k\in\mathbb{Z}\mid i\leqslant k\leqslant j\}$ by $\overline{i,j}$, note that $\overline{i,j}$ is nonempty if and only if $i\leqslant j$. Let $\delta_{i,j}$ denote the Kronecker symbol, namely, $\delta_{i,j}$ equals to $1$ when $i=j$ and $0$ otherwise.

	\section{Construction for Odd Orthogonal Lie Algebras}
	
	In this section, we start from $\mathfrak{o}(3)$ and repeatedly use (\ref{1.9}) and (\ref{1.10})-(\ref{1.12}) to obtain the representation of $\mathfrak{o}(2n+3)$. Moreover, we  present two special cases with particular highest weights at the end of this section.
	
	Recall that the odd orthogonal Lie algebra in (\ref{1.5}), we take the subspace of $\mathfrak{o}(2n+3)$
	\begin{eqnarray}
		H=\sum_{i=1}^{n+1}\mathbb{C}A_{i,i}
	\end{eqnarray}
	as a Cartan subalgebra and define $\{\varepsilon_i ~|~i\in\overline{1,n+1}\}\subset H^*$ by
	\begin{eqnarray}
		\varepsilon_i(A_{j,j})=\delta_{i,j}.
	\end{eqnarray}
	Then  finite-dimensional irreducible $\mathfrak{o}(2n+3)$-modules are in one-to-one correspondence with highest weight $\lambda=\sum_{i=1}^{n+1}\mu_i\varepsilon_i$ such that
	\begin{eqnarray}
		2\mu_1\in\mathbb{N} \text{ and } \mu_{i+1}-\mu_i\in\mathbb{N} \text{ for }i=\overline{1,n}.\label{2.3}
	\end{eqnarray}
	Note that $A_{i,j}=-A_{-j,-i}$ in (\ref{1.3})-(\ref{1.4}). We choose $\{A_{i,j}~|~0\leqslant |j| < i\leqslant n+1\}$ as positive root vectors, and $\{A_{i,j}~|~0\leqslant |i|< j\leqslant n+1\}$ as negative root vectors. In particular,
	\begin{eqnarray}
		\mathfrak{o}(2n+3)_+=\sum_{0\leqslant |j| < i\leqslant n+1}\mathbb{C}A_{i,j},~\mathfrak{o}(2n+3)_-=\sum_{0\leqslant |i|< j\leqslant n+1}\mathbb{C}A_{i,j}. \label{4.3}
	\end{eqnarray}
	are the nilpotent subalgebra of positive root vectors and the nilpotent subalgebra of negative root vectors, respectively. A $singular~vector$ of an $\mathfrak{o}(2n+3)$-module $V$ is a weight vector annihilated by the elements all positive root vectors. Set
	\begin{eqnarray}
		\mathscr{G}_0=\mathfrak{o}(2n+1)+\mathbb{C}A_{n+1,n+1},~\mathscr{G}_+=\sum_{j=-n}^n\mathbb{C}A_{n+1,j},~\mathscr{G}_-=\sum_{i=-n}^n\mathbb{C}A_{i,n+1}. \label{4.5}
	\end{eqnarray}
	Then $\mathscr{G}_{\pm}$ are abelian Lie subalgebras of $\mathfrak{o}(2n+3)$ and $\mathscr{G}_0$ is a reductive Lie subalgebra of $\mathfrak{o}(2n+3)$. Moreover,
	\begin{eqnarray}
		\mathfrak{o}(2n+3)=\mathscr{G}_-+\mathscr{G}_0+\mathscr{G}_+,~~[\mathscr{G}_0,\mathscr{G}_{\pm}]=\mathscr{G}_{\pm},\;[\mathscr{G}_+,\msr G_-]\subset\msr G_0.~\label{4.6}
	\end{eqnarray}
	By (\ref{1.9}) and (\ref{1.10})-(\ref{1.12}),
	\begin{eqnarray}
		\mathscr{G}_+(1\otimes M)=\{0\},~U(\mathscr{G}_0)(1\otimes M)=1\otimes M. \label{4.7}
	\end{eqnarray}
	Thus
	\begin{eqnarray}
		\overline{M}=U(\mathfrak{o}(2n+3))(1\otimes M)=U(\mathscr{G}_-)(1\otimes M). \label{4.8}
	\end{eqnarray}
	Moreover, (\ref{1.12}) and (\ref{1.14}) imply
	\begin{eqnarray}
		\overline{M}_r=\mathscr{G}_-^r(1\otimes M). \label{4.9}
	\end{eqnarray}
	
	We start with $n=1$. Let $\msr A_{(0)}=\mbb C[y]$ be the polynomial algebra in $y$. Fix $\mu\in\mbb C$, we have the following
representation of $\mathfrak{o}(3)$ on $\msr A_{(0)}$:
\begin{eqnarray}
		A_{1,1}|_{\msr A_{(0)}}=-y\partial_y+\mu,~A_{0,1}|_{\msr A_{(0)}}=-\frac{1}{2}y^2\partial_y+\mu y,~A_{0,-1}|_{\msr A_{(0)}}=-\partial_y.  \label{4.11}
	\end{eqnarray}
Moreover,
\begin{eqnarray}M_0=U(\mathfrak{o}(3))(1)=\sum_{i=0}^\infty\mbb CA_{0,1}^i(1)\end{eqnarray}
forms an irreducible $\mathfrak{o}(3)$-submodule with highest-weight vector $1$ of weight $2\mu\lmd_1$.
Recall that $\mbb N$ is the set of nonnegative integers. If $\mu\not\in \mbb N/2$, then $M_0=\msr A_{(0)}$. When $\mu\in\mbb N/2$,
$M_0=\sum_{i=0}^{2\mu}\mathbb{C} y^i$ is $(2\mu+1)$-dimensional irreducible $\mathfrak{o}(3)$-module.

Taking $M=\msr A_{(0)}$ in (1.10), we have	
	\begin{eqnarray}
		\widehat{\msr A_{(0)}}=\mathbb{C}[x_{-1},x_0,x_{1}]\otimes_{\mbb C}\mbb C[y]\cong \msr A_{(1)}=\mathbb{C}[x_{-1},x_0,x_{1},y] \label{4.13}
	\end{eqnarray}
with the representation of $\mathfrak{o}(5)$ given in (1.14)-(1.16), where $c$ is a fixed constant. Then $1\in\msr A_{(1)}$ is an $\mathfrak{o}(5)$-singular vector
with weight \begin{eqnarray}
		\lambda=\mu\varepsilon_1-c\varepsilon_2,\quad\mbox{redenoted as}\;\mu_1\ves_1+\mu_2\ves_2. \label{4.14}
	\end{eqnarray}
Take $\msr G_-$ in (2.5) with $n=1$. Then
\begin{eqnarray}M_1=U(\mathfrak{o}(5))(1)=U(\msr G_-)(M_0)\end{eqnarray}
forms a highest-weight irreducible $\mathfrak{o}(5)$-module, which is of finite dimension if and only if (2.3) with $n=2$ holds.

	According to (1.8), we simply redenote
	\begin{eqnarray}
		D=x_{-1}\partial_{x_{-1}}+x_0\partial_{x_0}+x_1\partial_{x_1},~~\eta=\frac{1}{2}x_0^2+x_1x_{-1},\label{4.16}
	\end{eqnarray}
		The representation of $\mathfrak{o}(5)$ in (1.14)-(1.16) with $n=1$ becomes:	
\begin{equation}A_{1,1}|_{\msr A_{(1)}}=x_1\partial_{x_1}-x_{-1}\partial_{x_{-1}}-y\partial_y+\mu_1,~A_{0,1}|_{\msr A_{(1)}}=x_0\partial_{x_1}-x_{-1}\partial_{x_0}-\frac{1}{2}y^2\partial_y+
\mu_1y,\label{4.17}\end{equation}
\begin{equation}A_{0,-1}|_{\msr A_{(1)}}=x_0\partial_{x_{-1}}-x_{1}\partial_{x_0}-\partial_y,~A_{2,1}|_{\msr A_{(1)}}=\partial_{x_1},\label{4.18}\end{equation}
\begin{equation}A_{0,-2}|_{\msr A_{(1)}}=-\partial_{x_0},~A_{2,-1}|_{\msr A_{(1)}}=\partial_{x_{-1}},~A_{2,2}|_{\msr A_{(1)}}=-D+\mu_2,\label{4.19}\end{equation}
\begin{equation}A_{1,2}|_{\msr A_{(1)}}=-x_1D+\eta\partial_{x_{-1}}+(\mu_2-\mu_1)x_1-(x_0-x_1y)\partial_{y},\label{4.20}\end{equation}
\begin{equation}A_{0,2}|_{\msr A_{(1)}}=-x_0D+\eta\partial_{x_0}+(\mu_2-\mu_1)x_0+(x_{-1}+\frac{1}{2}x_1y^2)\partial_y+\mu_1(x_0-x_1y),\label{4.21}\end{equation}
\begin{equation}A_{-1,2}|_{\msr A_{(1)}}=-x_{-1}D+\eta\partial_{x_1}+(\mu_2-\mu_1)x_{-1}-(x_{-1}y+\frac{1}{2}x_0y^2)\partial_y+\mu_1(2x_{-1}+x_0y).\label{4.22}\end{equation}
\pse
	
	Next we are going to realize the general case. Let
	\begin{eqnarray}
		\msr A_{(n)}=\mathbb{C}[x_{i,j}\mid0\leqslant |j|\leqslant i\leqslant n]  \label{4.23}
	\end{eqnarray}
	be the polynomial algebra in $(n+1)^2$ variables. In particular, we treat
\begin{equation}x_{0,0}=y,\;x_{1,-1}=x_{-1},\;x_{1,0}=x_0,\;x_{1,1}=x_1.\end{equation}

For later convenience, we denote
	\begin{equation}
		x_{i,j}=\left\lbrace
		\begin{array}{ll}
			\text{variables~} x_{i,j}, &\text{if~}0\leqslant |j|\leqslant i\leqslant n,\\
			1, &\text{if~}j=i+1,\\
			0, &\text{if~}j>i+1
		\end{array}
		\right.  \label{4.24}
	\end{equation}
	and define inductively on the second index $j\in\overline{i+1,n+1}$ as 
  \begin{eqnarray}
		&x_{i,-(i+1)}=-\frac{1}{2}\sum\limits_{r=-i}^ix_{i,r}x_{i,-r},\label{4.25}\label{con2} \\
		&x_{i,-j}=-\sum\limits_{r=1-j}^{j-1}x_{i,r}x_{j-1,-r}\;\;\text{for}\;i+2\leqslant j\leqslant n+1. \label{4.26}\label{con3}
	\end{eqnarray}
	Moreover, we take the notation
	\begin{eqnarray}
		\alpha_i=(\alpha_{i,-i},\cdots,\alpha_{i,i})\in\mathbb{N}^{2i+1}\;\; \for\;\;i\in\overline{0,n}, \label{4.27}
	\end{eqnarray}
	and
	\begin{eqnarray}
		\alpha=(\alpha_0,\alpha_1,\cdots,\alpha_n)\in\mathbb{N}^{(n+1)^2}. \label{4.28}
	\end{eqnarray}
	Denote  the monomials
	\begin{eqnarray}
		X_i^{\alpha_i}=\prod_{j=-i}^ix_{i,j}^{\alpha_{i,j}}~\text{and}~X^{\alpha}=\prod_{i=0}^nX_{i}^{\alpha_i}.~\label{4.29}
	\end{eqnarray}
	
	For $0\leqslant t\leqslant s\leqslant n$ and $\Theta=(\theta_1,\cdots,\theta_{s-t+1})\in\mathbb{Z}^{s-t+1}$ with $\theta_{k}\geqslant-(n+1)$ for $k\in\overline{1,s-t+1}$, we denote
	\begin{eqnarray}
		F_{s,t}(\Theta)=\begin{vmatrix}
			x_{t,\theta_1}&x_{t,\theta_2}&\cdots&x_{t,\theta_{s-t+1}}\\
			x_{t+1,\theta_1}&x_{t+1,\theta_2}&\cdots&x_{t+1,\theta_{s-t+1}}\\
			\vdots&\vdots&\vdots&\vdots\\
			x_{s,\theta_1}&x_{s,\theta_2}&\cdots&x_{s,\theta_{s-t+1}}
		\end{vmatrix}.\label{4.30}
	\end{eqnarray}
	In particular, we take $f_{t,t}(r)=x_{t,r}$ and for $t<s$,
	\begin{eqnarray}
			f_{s,t}(r)&=&F_{s,t}(t+1,t+2,\cdots, s, r)\nonumber\\
			&=&\begin{vmatrix}
				1&0&\cdots&0&x_{t,r}\\
				x_{t+1,t+1}&1&\ddots&\vdots&x_{t+1,r}\\
				\vdots&\vdots&\ddots&0&\vdots\\
				x_{s-1,t+1}&x_{s-1,t+2}&\cdots&1&x_{s-1,r}\\
x_{s,t+1}&x_{s,t+2}&\cdots&x_{s,s}&x_{s,r}
			\end{vmatrix}.\label{4.31}
	\end{eqnarray}
	
    Next we prove some lemmas which will be used frequently later.
	\begin{lemma}\label{L1}
		For $0\leqslant t< s\leqslant n$ and $|r|\leqslant n+1$, the following equations hold,
		\begin{enumerate}[(i)]
			\item $f_{s,t}(r)=x_{s,r}-\sum\limits_{k=t+1}^{s}x_{s,k}f_{k-1,t}(r)$, 
			\item $f_{s,t}(r)=x_{s,r}-\sum\limits_{k=t}^{s-1}x_{k,r}f_{s,k+1}(k+1)$. 
		\end{enumerate}
		
	\end{lemma}
    \begin{proof}
    	Note that the algebraic cofactor of $x_{s,k}$ in (\ref{4.31}) is
    	\begin{eqnarray}
    		& &(-1)^{s+k+1}\begin{vmatrix}
    			1&0&\cdots&0&0&0&\cdots&0&x_{t,r}\\
    			x_{t+1,t+1}&1&\cdots&0&0&0&\cdots&0&x_{t+1,r}\\
    			\vdots&\vdots&\ddots&\vdots&\vdots&\vdots&\ddots&\vdots&\vdots\\
    			x_{k-1,t+1}&x_{k-1,t+2}&\cdots&x_{k-1,k-1}&0&0&\cdots&0&x_{k-1,r}\\
    			x_{k,t+1}&x_{k,t+2}&\cdots&x_{k,k-1}&1&0&\cdots&0&x_{k,r}\\
    			x_{k+1,t+1}&x_{k+1,t+2}&\cdots&x_{k+1,k-1}&x_{k+1,k+1}&1&\cdots&0&x_{k+1,r}\\
    			x_{s-1,t+1}&x_{s-1,t+2}&\cdots&x_{s,k-1}&x_{s-1,k+1}&x_{s-1,k+2}&\cdots&1&x_{s-1,r}
    		\end{vmatrix}\nonumber\\&=&-\begin{vmatrix}
    		1&0&\cdots&0&x_{t,r}&0&0&\cdots&\;0\;\\
    		x_{t+1,t+1}&1&\cdots&0&x_{t+1,r}&0&0&\cdots&0\\
    		\vdots&\vdots&\ddots&\vdots&\vdots&\vdots&\vdots&\ddots&\vdots\\
    		x_{k-1,t+1}&x_{k-1,t+2}&\cdots&x_{k-1,k-1}&x_{k-1,r}&0&0&\cdots&0\\
    		x_{k,t+1}&x_{k,t+2}&\cdots&x_{k,k-1}&x_{k,r}&1&0&\cdots&0\\
    		x_{k+1,t+1}&x_{k+1,t+2}&\cdots&x_{k+1,k-1}&x_{k+1,r}&x_{k+1,k+1}&1&\cdots&0\\
    		x_{s-1,t+1}&x_{s-1,t+2}&\cdots&x_{s,k-1}&x_{s-1,r}&x_{s-1,k+1}&x_{s-1,k+2}&\cdots&1
    	\end{vmatrix}\nonumber\\
     &=&-\begin{vmatrix}
     	1&0&\cdots&0&x_{t,r}\\
     	x_{t+1,t+1}&1&\cdots&0&x_{t+1,r}\\
     	\vdots&\vdots&\ddots&\vdots&\vdots\\
     	x_{k-1,t+1}&x_{k-1,t+2}&\cdots&x_{k-1,k-1}&x_{k-1,r}
     \end{vmatrix}=-f_{k-1,t}(r).
    	\end{eqnarray}
       Thus expending the determinant in (\ref{4.31}) according to the last row we obtain $(i)$. Next we prove $(ii)$ by backward induction on $t\in\overline{0,s-1}$. If $t=s-1$,
       \begin{eqnarray}
       	f_{s-1,s}(r)=\begin{vmatrix}
       		1&x_{s-1,r}\\x_{s,s}&x_{s,r}
       	\end{vmatrix}=x_{s,r}-x_{s-1,r}x_{s,s}=x_{s,r}-x_{s-1,r}f_{s,s}(s).
       \end{eqnarray}
        Suppose that $t <s-1$,
       \begin{eqnarray}
       	f_{s,t}(r)=f_{s,t+1}(r)-x_{t,r}f_{s,t+1}(t+1)=x_{s,r}-\sum\limits_{k=t}^{s-1}x_{k,r}f_{s,k+1}(k+1).
       \end{eqnarray}
       Therefore, we obtained $(ii)$.
    \end{proof}\pse
	
	\begin{lemma}\label{L2}
		For $0\leqslant t\leqslant s\leqslant n$ and $r\geqslant 0$, we have
		\begin{equation}
			\sum\limits_{k=-(n+1)}^{n+1}x_{r,k}f_{s,t}(-k)=0. \label{4.35}
		\end{equation}
	\end{lemma}
    \begin{proof}
    	For $0\leqslant i\leqslant j\leqslant n$, note that $x_{j,j+1}=1$ and $x_{i,l-1}=x_{j,l}=0$ when $l>j+1$ by the definition in (\ref{4.24})-(\ref{4.26}). So we have
    	\begin{eqnarray}
    		\sum_{k=-(n+1)}^{n+1}x_{i,k}x_{j,-k}=\sum_{k=-(j+1)}^{j}x_{i,k}x_{j,-k}=x_{i,-(j+1)}+\sum_{k=-j}^{j}x_{i,k}x_{j,-k}=0. \label{4.34}
    	\end{eqnarray}

        Moreover,
        \begin{eqnarray}
        	\sum_{k=-(n+1)}^{n+1}x_{r,k}x_{r,-k}=\sum_{k=-r}^{r}x_{r,k}x_{r,-k}+2x_{r,-(r+1)}=0.
        \end{eqnarray}

        Therefore,
        \begin{eqnarray}
        	\sum\limits_{k=-\infty}^{-\infty}x_{r,k}f_{s,t}(-k)&=&\begin{vmatrix}
        		1&0&\cdots&0&\sum\limits_{k=-(n+1)}^{n+1}x_{r,k}x_{t,-k}\\
        		x_{t+1,t+1}&1&\ddots&\vdots&\sum\limits_{k=-(n+1)}^{n+1}x_{r,k}x_{t+1,-k}\\
        		\vdots&\vdots&\ddots&0&\vdots\\
        		x_{s-1,t+1}&x_{s-1,t+2}&\cdots&1&\sum\limits_{k=-(n+1)}^{n+1}x_{r,k}x_{s-1,-k}\\
        		x_{s,t+1}&x_{s,t+2}&\cdots&x_{s,s}&\sum\limits_{k=-(n+1)}^{n+1}x_{r,k}x_{s,-k}
        	\end{vmatrix}\nonumber\\
        &=&\begin{vmatrix}
        	1&0&\cdots&0&\;0\;\\
        	x_{t+1,t+1}&1&\ddots&\vdots&0\\
        	\vdots&\vdots&\ddots&0&\vdots\\
        	x_{s-1,t+1}&x_{s-1,t+2}&\cdots&1&0\\
        	x_{s,t+1}&x_{s,t+2}&\cdots&x_{s,s}&0
        \end{vmatrix}=0.
        \end{eqnarray}
    \end{proof}\pse
	From Lemma \ref{L2}, we obtain an expression of the polynomial defined in (\ref{4.26}),
	\begin{eqnarray}
		x_{i,-j}=-\sum_{k=-t}^ix_{i,k}f_{j-1,t}(-k)-f_{j-1,t}(-i-1),\;\;\forall\; t\in\overline{0,i}.\label{4.36}
	\end{eqnarray}
	
	Suppose that we have used   (\ref{1.10})-(\ref{1.12}) successively to give a representation of $\mathfrak{o}(2n+1)$ on $\msr A_{(n-1)}$ such that
\begin{equation}
	M_{n-1}=U(\mathfrak{o}(2n+1))(1) \label{2.32}
	\end{equation}
is an irreducible $\mathfrak{o}(2n+1)$-submodule with highest-weight vector $1$ of weight $\lmd'=\sum_{i=1}^n\mu_i\ves_i$ and is of finite dimension if and only if (2.3) with $n+1$ replaced by $n$ holds. Take $M=\msr A_{(n-1)}$ in (1.10). Now (\ref{1.10})-(\ref{1.12}) with
	\begin{eqnarray}
		c=-\mu_{n+1}\text{ and }x_i=x_{n,i} \text{ for }i\in\overline{-n,n} \label{4.38}
	\end{eqnarray}
give a representation of $\mathfrak{o}(2n+3)$ on
\begin{eqnarray}\wht{\msr A_{(n-1)}}=\mbb C[x_{n,-n},...,x_{n,0},...,x_{n.n}]\otimes \msr A_{(n-1)}\cong \msr A_{(n)}.\end{eqnarray}
Take $\msr G_-$ in (2.5). Then
\begin{eqnarray}M_n=U(\mathfrak{o}(2n+3))(1)=U(\msr G_-)(M_{n-1})\label{2.35}\end{eqnarray}
is an irreducible $\mathfrak{o}(2n+3)$-submodule with highest-weight vector $1$ of weight $\lmd=\sum_{i=1}^{n+1}\mu_i\ves_i$ and is of finite dimension if and only if (2.3) holds. Letting $\mu_0=0$,  we have

	\begin{theorem}\label{T3}
	The representation formulas of $\mathfrak{o}(2n+3)$ on $\msr A_{(n)}$ in terms of the notations in (\ref{4.24})-(\ref{4.31}) are as follows.\psp
	
    For $0< |i|<j\leqslant n+1$,
    \begin{eqnarray}
		&A_{j,i}|_{\msr A_{(n)}}=\sum\limits_{r=j}^n(x_{r,j}\partial_{x_{r,i}}-x_{r,-i}\partial_{x_{r,-j}})+\partial_{x_{j-1,i}},\label{4.42}\label{a1}
	\end{eqnarray}
	
    For $1\leqslant j\leqslant n+1$,
	\begin{eqnarray}
	    A_{j,j}|_{\msr A_{(n)}}&=&\sum\limits_{r=j}^n(x_{r,j}\partial_{x_{r,j}}-x_{r,-j}\partial_{x_{r,-j}})-\sum\limits_{s=1-j}^{j-1}x_{j-1,s}\partial_{x_{j-1,s}}
	    +\mu_j, \label{4.43}\\
	    \;\nonumber\\
	    A_{j,0}|_{\msr A_{(n)}}&=&\sum\limits_{r=j}^n(x_{r,j}\partial_{x_{r,0}}-x_{r,0}\partial_{x_{r,-j}})+\partial_{x_{j-1,0}},\label{4.42.2}\\
	    \;\nonumber\\
     	A_{0,j}|_{\msr A_{(n)}}&=&\sum\limits_{r=j}^nx_{r,0}\partial_{x_{r,j}}-\sum_{r=0}^{n}x_{r,-j}\partial_{x_{r,0}}\nonumber\\
     	& &-\sum_{r=0}^{j-1}\big[ x_{r,0}\sum\limits_{s=-r}^rf_{j-1,r}(s)\partial_{x_{r,s}}-(\mu_{r+1}-\mu_r)f_{j-1,r}(0)\big].
     	\label{5.45}
    \end{eqnarray}
	
	For $1\leqslant i<j\leqslant n+1$,
	\begin{eqnarray}
        A_{i,j}|_{\msr A_{(n)}}&=&\sum\limits_{r=j}^nx_{r,i}\partial_{x_{r,j}}-
        \sum_{r=i}^nx_{r,-j}\partial_{x_{r,-i}}-\sum\limits_{s=1-i}^{i-1}\!\!f_{j-1,i-1}(s)\partial_{x_{i-1,s}}\nonumber\\&&-\sum\limits_{r=i}^{j-1}\big[ x_{r,i}\sum\limits_{s=-r}^rf_{j-1,r}(s)\partial_{x_{r,s}}-(\mu_{r+1}-\mu_r)f_{j-1,r}(i)\big],
			\label{4.44}
	\end{eqnarray}
    and
	\begin{eqnarray}
		A_{-i,j}|_{\msr A_{(n)}}&=&\sum\limits_{r=j}^nx_{r,-i}\partial_{x_{r,j}}-\sum\limits_{r=i}^{n}x_{r,-j}\partial_{x_{r,i}}\nonumber\\
		& &-\sum\limits_{r=0}^{j-1}\big[ x_{r,-i}\sum\limits_{s=-r}^rf_{j-1,r}(s)\partial_{x_{r,s}}+(\mu_r-\mu_{r+1})f_{j-1,r}(-i)\big]\nonumber \\
		& &+\sum\limits_{r=0}^{i-1}\big[ x_{r,-j}\sum\limits_{s=-r}^rf_{i-1,r}(s)\partial_{x_{r,s}}+(\mu_r-\mu_{r+1})f_{i-1,r}(-j)\big].\label{4.46}\label{a6}
	\end{eqnarray}
	\end{theorem}
	\begin{proof}
		When $n=1$, we replace the variables in formulas in (\ref{4.17})-(\ref{4.22}) by
		\begin{eqnarray}
			x_{0,0}=y_0,\;x_{1,i}=x_i,\;\for\;i=-1,0,1.
		\end{eqnarray}
	Moreover, we redenote
	\begin{eqnarray}
		D_1=x_{1,-1}\partial_{x_{1,-1}}+x_{1,0}\partial_{x_{1,0}}+x_{1,1}\partial_{x_{1,1}}.
	\end{eqnarray}
	    Note that (\ref{4.20})-(\ref{4.22}) can be written as
	    \begin{eqnarray}
	    A_{1,2}|_{\msr A_{(1)}}\!\!&=&\!\!-x_{1,1}D_1-x_{1,-2}\partial_{x_{1,-1}}+(\mu_2-\mu_1)x_{1,1}-\begin{vmatrix}
	    		1\!&\!x_{0,0}\\x_{1,1}\!&\!x_{1,0}
	    	\end{vmatrix}\partial_{x_{0,0}},\\
	    	A_{0,2}|_{\msr A_{(1)}}\!\!&=&\!\!-x_{1,0}D_1-x_{1,-2}\partial_{x_{1,0}}+(\mu_2-\mu_1)x_{1,0}\nonumber\\
	    	\!\!& &\!\!-\big(x_{0,0}\begin{vmatrix}
	    		1\!&\!x_{0,0}\\x_{1,1}\!&\!x_{1,0}
	    	\end{vmatrix}+x_{0,-2}\big)\partial_{x_{0,0}}+\mu_1\begin{vmatrix}
	    	1\!&\!x_{0,0}\\x_{1,1}\!&\!x_{1,0}
    	\end{vmatrix},\\
	    	A_{-1,2}|_{\msr A_{(1)}}\!\!&=&\!\!-x_{1,-1}D_1-x_{1,-2}\partial_{x_{1,1}}+(\mu_2-\mu_1)x_{1,-1}\nonumber\\
	    	\!\!& &\!\!+\big(x_{0,-2}x_{0,0}\!-\!x_{0,-1}\begin{vmatrix}
	    		1\!&\!x_{0,0}\\x_{1,1}\!&\!x_{1,0}
	    	\end{vmatrix}\big)\partial_{x_{0,0}}\!-\!\mu_1(x_{0,-2}\!-\!\begin{vmatrix}
	    	1\!&\!x_{0,-1}\\x_{1,1}\!&\!x_{1,-1}
    	\end{vmatrix}).
	    \end{eqnarray}
		These formulas together with (\ref{4.17})-(\ref{4.19}) exactly coincide with (\ref{a1})-(\ref{a6}). Hence the theorem holds for $\mathfrak{o}(5)$. Suppose that the theorem holds for $\mathfrak{o}(2n+1)$.
		For $0\leqslant |i|,j<n+1$, applying induction on
		\begin{eqnarray}
			A_{i,j}|_{\msr A_{(n)}}=x_{n,i}\partial_{x_{n,j}}-x_{n,-j}\partial_{x_{n,-i}}+A_{i,j}|_{\msr A_{(n-1)}},
		\end{eqnarray}
	   we obtain following equations: For $0< |i|<j\leqslant n$,
	   \begin{eqnarray}
	   	&A_{j,i}|_{\msr A_{(n)}}=\sum\limits_{r=j}^n(x_{r,j}\partial_{x_{r,i}}-x_{r,-i}\partial_{x_{r,-j}})+\partial_{x_{j-1,i}},
	   \end{eqnarray}
	   
	   For $1\leqslant j\leqslant n$,
	   
	   \begin{eqnarray}
	   	A_{j,j}|_{\msr A_{(n)}}&=&\sum\limits_{r=j}^n(x_{r,j}\partial_{x_{r,j}}-x_{r,-j}\partial_{x_{r,-j}})-\sum\limits_{s=1-j}^{j-1}x_{j-1,s}\partial_{x_{j-1,s}}
	   	+\mu_j,\\
	   	\;\nonumber\\
	   	A_{j,0}|_{\msr A_{(n)}}&=&\sum\limits_{r=j}^n(x_{r,j}\partial_{x_{r,0}}-x_{r,0}\partial_{x_{r,-j}})+\partial_{x_{j-1,0}},\\
	   	\;\nonumber\\
	   	A_{0,j}|_{\msr A_{(n)}}&=&\sum\limits_{r=j}^nx_{r,0}\partial_{x_{r,j}}-\sum_{r=0}^{n}x_{r,-j}\partial_{x_{r,0}}\nonumber\\
	   	& &-\sum_{r=0}^{j-1}\big[ x_{r,0}\sum\limits_{s=-r}^rf_{j-1,r}(s)\partial_{x_{r,s}}-(\mu_{r+1}-\mu_r)f_{j-1,r}(0)\big].
	   \end{eqnarray}
	   
	   For $1\leqslant i<j\leqslant n$,
	   \begin{eqnarray}
	   	A_{i,j}|_{\msr A_{(n)}}&=&\sum\limits_{r=j}^nx_{r,i}\partial_{x_{r,j}}-
	   	\sum_{r=i}^nx_{r,-j}\partial_{x_{r,-i}}-\sum\limits_{s=1-i}^{i-1}\!\!f_{j-1,i-1}(s)\partial_{x_{i-1,s}}\nonumber\\&&-\sum\limits_{r=i}^{j-1}\big[ x_{r,i}\sum\limits_{s=-r}^rf_{j-1,r}(s)\partial_{x_{r,s}}-(\mu_{r+1}-\mu_r)f_{j-1,r}(i)\big],
	   \end{eqnarray}
	   and
	   \begin{eqnarray}
	   	A_{-i,j}|_{\msr A_{(n)}}&=&\sum\limits_{r=j}^nx_{r,-i}\partial_{x_{r,j}}-\sum\limits_{r=i}^{n}x_{r,-j}\partial_{x_{r,i}}\nonumber\\
	   	& &-\sum\limits_{r=0}^{j-1}\big[ x_{r,-i}\sum\limits_{s=-r}^rf_{j-1,r}(s)\partial_{x_{r,s}}+(\mu_r-\mu_{r+1})f_{j-1,r}(-i)\big]\nonumber \\
	   	& &+\sum\limits_{r=0}^{i-1}\big[ x_{r,-j}\sum\limits_{s=-r}^rf_{i-1,r}(s)\partial_{x_{r,s}}+(\mu_r-\mu_{r+1})f_{i-1,r}(-j)\big].
	   \end{eqnarray}
		
		By (\ref{1.11}), we have
		\begin{eqnarray}
			&A_{n+1,i}|_{\msr A_{(n)}}=\partial_{x_{n,i}},\;\for\;i\in\overline{-n,n},\\
			&A_{n+1,n+1}|_{\msr A_{(n)}}=-\sum\limits_{s=-n}^nx_{n,s}\partial_{x_{n,s}}+\mu_{n+1}.
		\end{eqnarray}
		
		Consider $A_{i,n+1}|_{\mathscr{A}(n)}$ for $i\in\overline{-n,n}$. Expression (\ref{1.12}) yields
		\begin{eqnarray}
			A_{i,n+1}|_{\msr A_{(n)}}&=&-x_{n,i}\sum\limits_{s=-n}^nx_{n,s}\partial_{x_{n,s}}
            -x_{n,-(n+1)}\partial_{x_{n,-i}}\nonumber\\ &&-\big(\sum\limits_{r=-n}^n\!x_{n,r}A_{i,r}-\mu_{n+1}x_{n,i}\big)|_{\msr A_{(n-1)}}.\label{4.54}
		\end{eqnarray}
		Since it is a first-order differential operator, we will use induction to figure out what the polynomial coefficient of each $\partial_{x_{t,s}}$ and the polynomial term  by calculation in parts.
		
		When $i\geqslant 0$, we denote
		\begin{equation}
			\mathfrak{D}_{i,r,t}=\left\lbrace \begin{array}{ll}
				-x_{t,i}\sum\limits_{s=-t}^tf_{r-1,t}(s)\partial_{x_{t,s}}-x_{t,-r}\partial_{x_{t,-i}},&\text{if}\;t\in\overline{i,n-1},~r\in\overline{t+1,n},\\
				x_{t,i}\partial_{x_{t,r}}-x_{t,-r}\partial_{x_{t,-i}},&\text{if}\;t\in\overline{i,n-1},~r\in\overline{-t,t},\\
				-\partial_{x_{t,-i}},&\text{if}\;t\in\overline{i,n-1},~r=-t-1,\\
				-\sum\limits_{s=-t}^tf_{r-1,t}(s)\partial_{x_{t,s}},&\text{if}\;t=i-1\geqslant 0,~r\in\overline{t+1,n},\\
				\partial_{x_{t,r}},&\text{if}\;t=i-1\geqslant0,~r\in\overline{-t,t},\\
				0,&\text{otherwise}.
			\end{array}
			\right.
		\end{equation}
	    By induction,
	    \begin{eqnarray}
	    	A_{i,r}|_{\msr A_{(n-1)}}=\sum_{t=0}^{n-1}\mathfrak{D}_{i,r,t}+\mathfrak{P}_{i,r},
	    \end{eqnarray}
		where $\mathfrak{P}_{i,r}$ is the polynomial term of $A_{i,r}|_{\mathscr{A}(n-1)}$. If $t\in\overline{i,n-1}$, by Lemma \ref{L1}, we have
		\begin{eqnarray}
			\!\!\sum_{r=-n}^n\!\!x_{n,r}\mathfrak{D}_{i,r,t}\!\!&=&\!\!x_{t,i}\sum\limits_{s=-t}^t\big(x_{n,s}-\!\!\sum\limits_{r=t+1}^n\!\!x_{n,r}f_{r-1,t}(s)\big)\partial_{x_{t,s}}-\!\!\!\sum\limits_{r=-(t+1)}^n\!\!\!x_{n,r}x_{t,-r}\partial_{x_{t,-i}}\nonumber\\
			\!\!&=&\!\!x_{t,i}\sum\limits_{s=-t}^tf_{n,t}(s)\partial_{x_{t,s}}+x_{t,-(n+1)}\partial_{x_{t,-i}},\label{4.56}
		\end{eqnarray}
	    and
	    \begin{eqnarray}
	    	\sum_{r=-n}^nx_{n,r}\mathfrak{D}_{i,r,i-1}=\sum\limits_{s=-i+1}^{i-1}f_{n,i-1}(s)\partial_{x_{i-1,s}}.\label{4.58}
	    \end{eqnarray}
It is obvious that $A_{i,r}|_{\mathscr{A}(n)}$ is free of $\partial_{x_{t,s}}$'s with $t<i-1$. Recall $f_{n,n}(i)=x_{n,i}$. The  polynomial term in $A_{i,n+1}|_{\msr A_{(n)}}$ is
		\begin{eqnarray}
			& &-c_nx_{n,i}-\sum_{r=-n}^nx_{n,r}\mathfrak{P}_{i,r}\nonumber\\
			&=&(\mu_{n+1}- \mu_i)x_{n,i}-\sum\limits_{t=i}^{n-1}\sum\limits_{r=t+1}^n(\mu_{t+1}-\mu_t)x_{n,r}f_{r-1,t}(i)\nonumber\\
			&=& (\mu_{n+1}-\mu_n)x_{n,i}+\sum\limits_{t=i}^{n-1}(\mu_{t+1}-\mu_t)(x_{n,i}-\sum\limits_{r=t+1}^nx_{n,r}f_{r-1,t}(i))\nonumber\\
			&=&(\mu_{n+1}-\mu_n)x_{n,i}+\sum\limits_{t=i}^{n-1}(\mu_{t+1}-\mu_t)f_{n,t}(i)\nonumber\\ 
			&=&\sum\limits_{t=i}^n(\mu_{t+1}-\mu_t)f_{n,t}(i).\label{4.59}
		\end{eqnarray}
		According to (\ref{4.54}) and the fact $f_{n,n}(s)=x_{n,s}$, we sum (\ref{4.56}) over $t\in\overline{i,n-1}$ together with  (\ref{4.58}) and (\ref{4.59}), it turns out to be 
		\begin{eqnarray}			
				A_{i,n+1}|_{\msr A_{(n)}}&=
				&-\sum\limits_{r=i}^{n}\big[ x_{r,i}\sum\limits_{s=-r}^rf_{n,r}(s)\partial_{x_{r,s}}-(\mu_{r+1}-\mu_r)f_{n,r}(i)+x_{r,-n-1}\partial_{x_{r,-i}}\big] \nonumber\\
				&&-\sum\limits_{s=-i+1}^{i-1}f_{n,i-1}(s)\partial_{x_{i-1,s}} \label{4.60}			
		\end{eqnarray}
		for $\in\overline{0,n}$. Note that the last term in the right hand of (\ref{4.60}) will be $0$ when $i=0$ since the scope of sum is empty.
		
		Next we will derive (\ref{4.46}) in the case of $j=n+1$. In this case, we assume $i\in\overline{1,n}$ and let
		\begin{equation}
			\mathfrak{D}_{-i,r,t}=\left\lbrace\begin{array}{ll}
				\!-x_{t,-i}\sum\limits_{s=-t}^tf_{r-1,t}(s)\partial_{x_{t,s}}-x_{t,-r}\partial_{x_{t,i}},&\!\text{if}\;t\in\overline{i,n-1},~r\in\overline{t\!+\!1,n},\\
				\!x_{t,-i}\partial_{x_{t,r}}-x_{t,-r}\partial_{x_{t,i}},&\!\text{if}\;t\in\overline{i,n-1},~r\in\overline{-t,t},\\
				\!-\partial_{x_{t,i}},&\!\text{if}\;t\in\overline{i,n-1},~r=-t\!-\!1,\\
				\!-x_{t,-i}\!\sum\limits_{s=-t}^t\!\!f_{r-1,t}(s)\partial_{x_{t,s}}\!+\!x_{t,-r}\!\sum\limits_{s=-t}^t\!\!f_{i-1,t}(s)\partial_{x_{t,s}},&\!\text{if}\;t\in\overline{0,i-1},~r\in\overline{t\!+\!1,n},\\
				\!x_{t,-i}\partial_{x_{t,r}}+x_{t,-r}\sum\limits_{s=-t}^tf_{i-1,t}(s)\partial_{x_{t,s}},&\!\text{if}\;t\in\overline{0,i-1},~r\in\overline{-t,t},\\
				\!\sum\limits_{s=-t}^tf_{i-1,t}(s)\partial_{x_{t,s}},&\!\text{if}\;t\in\overline{0,i-1},~r=-t\!-\!1,\\
				\!0,&\!\text{otherwise}.
			\end{array}\right.
		\end{equation}
		By inductional assumption,
		\begin{eqnarray}
			A_{-i,r}|_{\msr A_{(n-1)}}=\sum_{t=0}^{n-1}\mathfrak{D}_{-i,r,t}+\mathfrak{P}_{-i,r}.
		\end{eqnarray}
		where $\mathfrak{P}_{-i,r}$ is the polynomial term of $A_{-i,r}|_{\mathscr{A}(n-1)}$. Applying Lemma \ref{L1}, we have
		\begin{eqnarray}
			\!\!\sum_{r=-n}^n\!x_{n,r}\mathfrak{D}_{-i,r,t}\!\!\!&=&\!\!\!x_{t,-i}\sum\limits_{s=-t}^t\big(x_{n,s}-\!\sum\limits_{r=t+1}^nx_{n,r}f_{r-1,t}(s)\big)\partial_{x_{t,s}}-\!\!\!\sum\limits_{r=-t-1}^n\!\!\!x_{n,r}x_{t,-r}\partial_{x_{t,i}}\nonumber\\
			\!\!\!&=&\!\!\!x_{t,-i}\sum\limits_{s=-t}^tf_{n,t}(s)\partial_{x_{t,s}}+x_{t,-(n+1)}\partial_{x_{t,i}} \label{4.62}
		\end{eqnarray}
		for $t\in\overline{i,n-1}$ and
		\begin{eqnarray}			\!\!\sum_{r=-n}^n\!x_{n,r}\mathfrak{D}_{-i,r,t}\!\!\!&=&\!\!\!x_{t,-i}\!\sum\limits_{s=-t}^t\big(x_{n,s}\!-\!\!\sum\limits_{r=t+1}^n\!x_{n,r}f_{r-1,t}(s)\big)\partial_{x_{t,s}}+\!\!\!\!\!\sum\limits_{r=-t-1}^n\!\!\!x_{n,r}\big(x_{t,-r}\sum\limits_{s=-t}^t\!f_{i-1,t}(s)\partial_{x_{t,s}}\big)\nonumber\\
			\!\!\!&=&\!\!\!x_{t,-i}\sum\limits_{s=-t}^tf_{n,t}(s)\partial_{x_{t,s}}-x_{t,-(n+1)}\sum\limits_{s=-t}^tf_{i-1,t}(s)\partial_{x_{t,s}}.\label{4.64}
		\end{eqnarray}
		for $t\in\overline{0,i-1}$. Moreover, the polynomial term of $A_{-i,n+1}|_{\msr A_{(n)}}$ is
		\begin{eqnarray}
			& \!&\!\!-c_nx_{n,-i}-\sum_{r=-n}^nx_{n,r}\mathfrak{P}_{-i,r}\nonumber\\
			&=\!&\!\!\mu_{n+1}x_{n,-i}\!+\!\mu_ix_{n,-i}-\!\!\!\!\sum\limits_{0\leqslant t<r\leqslant n}\!\!\!(\mu_{t+1}\!-\!\mu_t)x_{n,r}f_{r-1,t}(-i)+\!\!\!\!\sum\limits_{0\leqslant t\leqslant i-1 \atop -t\!-\!1\leqslant r\leqslant n}\!\!\!(\mu_{t+1}\!-\!\mu_t)x_{n,r}f_{i-1,t}(-r)\nonumber\\				
			&=\!&\!\!\sum\limits_{t=0}^{n}(\mu_{t+1}-\mu_t)f_{n,t}(-i)-\sum\limits_{t=0}^{i-1}(\mu_{t+1}-\mu_t)f_{i-1,t}(-(n+1)), \label{4.65}
		\end{eqnarray}
		According to (\ref{4.54}), summing (\ref{4.62}) over $t\in\overline{i,n-1}$, (\ref{4.64}) over $t\in\overline{0,i-1}$ together with (\ref{4.65}), it turns out to be
      \begin{eqnarray}
      	& &A_{-i,n+1}|_{\msr A_{(n)}}\nonumber\\
      	&=&-\sum\limits_{t=0}^n\big[ x_{t,-i}\sum\limits_{s=-t}^tf_{n,t}(s)\partial_{x_{t,s}}+(\mu_t-\mu_{t+1})f_{n,t}(-i)\big]+\sum\limits_{t=i}^nx_{t,-(n+1)}\partial_{x_{t,i}}\nonumber \\
      	& &+\sum\limits_{t=0}^{i-1}\big[ x_{t,-(n+1)}\sum\limits_{s=-t}^tf_{i-1,t}(s)\partial_{x_{t,s}}+(\mu_t-\mu_{t+1})f_{i-1,t}(-(n+1))\big]. \label{4.66}
		\end{eqnarray}
		This completes the proof.
	\end{proof}\pse

	Next we will determine the finite-dimensional irreducible $\mathfrak{o}(2n+3)$-module $\overline{M}=V_{2n+3}(\lambda)$ with highest weight
	\begin{equation}
		\lambda=\mu_1\varepsilon_1+\mu_2\varepsilon_2+\cdots+\mu_{n+1}\varepsilon_{n+1},
	\end{equation}
	 Satisfying (\ref{2.3}). Recall that $x_{s,s+1}=1$ and $x_{s,t}=0$ if $t>s+1$ by the convention (\ref{4.24}), and $f_{n,s}(t)=0$ when $t>s$ by (\ref{4.31}). 
	 
	 For $i,j\in\overline{0,1+n}$, we denote the term of first-order differential operators on $A_{i,j}|_{\mathscr{A}(n)}$ by $\mathcal{D}_{i,j}$: (cf. (\ref{a1})-(\ref{a6}))
	 
	 \begin{eqnarray}
	 	\mathcal{D}_{j,j}&=&\sum\limits_{r=j}^n(x_{r,j}\partial_{x_{r,j}}-x_{r,-j}\partial_{x_{r,-j}})-\sum\limits_{s=1-j}^{j-1}x_{j-1,s}\partial_{x_{j-1,s}}, \label{d1}\\
	 	\;\nonumber\\
	 	\mathcal{D}_{j,0}&=&\sum\limits_{r=j}^n(x_{r,j}\partial_{x_{r,0}}-x_{r,0}\partial_{x_{r,-j}})+\partial_{x_{j-1,0}},\label{d2}\\
	 	\;\nonumber\\
	 	\mathcal{D}_{0,j}&=&\sum\limits_{r=j}^nx_{r,0}\partial_{x_{r,j}}-\sum_{r=0}^{n}x_{r,-j}\partial_{x_{r,0}}-\sum_{r=0}^{j-1}\big( x_{r,0}\sum\limits_{s=-r}^rf_{j-1,r}(s)\partial_{x_{r,s}}\big),
	 	\label{d3}
	 \end{eqnarray} 
	 if moreover, $|i|<j$,
	 \begin{eqnarray}
	 	&\mathcal{D}_{j,i}\;=\;\sum\limits_{r=j}^n(x_{r,j}\partial_{x_{r,i}}-x_{r,-i}\partial_{x_{r,-j}})+\partial_{x_{j-1,i}},\label{d4}
	 \end{eqnarray}
     if $0<i<j$,
	 \begin{eqnarray}
	 	\mathcal{D}_{i,j}&=&\sum\limits_{r=j}^nx_{r,i}\partial_{x_{r,j}}-
	 	\sum_{r=i}^nx_{r,-j}\partial_{x_{r,-i}}-\sum\limits_{s=1-i}^{i-1}\!\!f_{j-1,i-1}(s)\partial_{x_{i-1,s}}\nonumber\\&&-\sum\limits_{r=i}^{j-1}\big( x_{r,i}\sum\limits_{s=-r}^rf_{j-1,r}(s)\partial_{x_{r,s}}\big),
	 	\label{d5}
	 \end{eqnarray}
	 and
	 \begin{eqnarray}
	 	\mathcal{D}_{-i,j}&=&\sum\limits_{r=j}^nx_{r,-i}\partial_{x_{r,j}}-\sum\limits_{r=i}^{n}x_{r,-j}\partial_{x_{r,i}}\nonumber\\
	 	& &-\sum\limits_{r=0}^{j-1}\big( x_{r,-i}\sum\limits_{s=-r}^rf_{j-1,r}(s)\partial_{x_{r,s}}\big)+\sum\limits_{r=0}^{i-1}\big( x_{r,-j}\sum\limits_{s=-r}^rf_{i-1,r}(s)\partial_{x_{r,s}}\big).\label{d6}
	 \end{eqnarray}

	 \begin{lemma}\label{L4}
	 	For $0\leqslant |i|< j\leqslant n+1,\;r\in\overline{0,n}$ and $s\in\overline{-(n+1),n+1}$, we have
	 	\begin{enumerate}[(i)]
	 		\item $\mathcal{D}_{j,i}(x_{r,s})=\delta_{i,s}x_{r,j}-\delta_{-j,s}x_{r,-i}$.
	 		\item $\mathcal{D}_{j,j}(x_{r,s})=(\delta_{j,s}-\delta_{-j,s}-\delta_{j-1,r})x_{r,s}$.
	 		\item Moreover, if $j\leqslant r$, $$\mathcal{D}_{i,j}(x_{r,s})=\delta_{j,s}x_{r,i}-\delta_{-i,s}x_{r,-j},$$
	 		if $j>r,\;i\geqslant -r$, $$\mathcal{D}_{i,j}(x_{r,s})=\delta_{j,s}x_{r,i}-\delta_{-i,s}x_{r,-j}-x_{r,i}f_{j-1,r}(s),$$
	 		if $j>r,\;i<-r$,
	 		$$\mathcal{D}_{i,j}(x_{r,s})=\delta_{j,s}x_{r,i}-\delta_{-i,s}x_{r,-j}-x_{r,i}f_{j-1,r}(s)+x_{r,-j}f_{-i-1,r}(s).$$
	 	\end{enumerate}
	 \end{lemma}
     \begin{proof}
     	See Appendix.
     \end{proof}\pse
	 
	 For $0\leqslant t\leqslant s\leqslant n$, we set
	 \begin{eqnarray}
	 	\mathbb{J}_{s,t}=\{(\theta_1,\cdots,\theta_{s-t+1})\in\mathbb{Z}^{s-t+1}\mid -(n+1)\leqslant \theta_1<\cdots<\theta_{s-t+1}\leqslant s\}.
	 \end{eqnarray}
	 For $i\neq j$, $0\leqslant t\leqslant s\leqslant n$ and $\Theta\in\mathbb{J}_{s,t}$, we denote
	 \begin{eqnarray}
	 	\Delta_{i,j}(F_{s,t}(\Theta))&=&
	 		\sum_{r=1}^{s-t+1}\delta_{j,\theta_r}F_{s,t}(\theta_1,\cdots,\theta_{r-1},i,\theta_{r+1},\cdots,\theta_{s-t+1})\nonumber\\
	 		& &-\sum_{r=1}^{s-t+1}\delta_{-i,\theta_r}F_{s,t}(\theta_1,\cdots,\theta_{r-1},-j,\theta_{r+1},\cdots,\theta_{s-t+1}),
	 \end{eqnarray}
	 and
	 \begin{eqnarray}
	 	\Delta_{j,j}(F_{s,t}(\Theta))\;:=\;
	 	\sum_{r=1}^{s-t+1}(\delta_{j,\theta_r}-\delta_{-j,\theta_r})F_{s,t}(\Theta)-\sum_{r=t}^s\delta_{j-1,r}F_{s,t}(\Theta).
	 \end{eqnarray}
	 For any $\Theta=(\theta_1,\theta_2,\cdots,\theta_k)\in\mathbb{Z}^k$ and $p,q,r\in\mathbb{Z}$ with $s-r<k$, we define $\Theta_{p,q}(r)\in\mathbb{Z}^{k+p-q}$ as follows
	 \begin{eqnarray}
	 	\Theta_{p,q}(r):=\left\lbrace \begin{array}{ll}
	 		(0,0,\cdots,0),&\text{if}\;s\geqslant r,\\
	 		(\theta_1,\theta_2,\cdots,\theta_k,t),&\text{if}\;s=r-1,\\
	 		(\theta_1,\theta_2,\cdots,\theta_k,s+2,s+3\cdots,r,t),&\text{if}\;s<r-1.
	 	\end{array}\right.
	 \end{eqnarray}
	 
	 The following lemma will be used to prove the main result of this section.
	 \begin{lemma}\label{L5}
	 	For $0\leqslant |i|< j\leqslant n+1,\;0\leqslant t\leqslant s\leqslant n$ and $\Theta\in\mathbb{J}_{s,t}$, we have
	 	\begin{enumerate}[(i)]
	 		\item $\mathcal{D}_{j,i}(F_{s,t}(\Theta))=\Delta_{j,i}(F_{s,t}(\Theta))$.
	 		\item $\mathcal{D}_{j,j}(F_{s,t}(\Theta))=\Delta_{j,j}(F_{s,t}(\Theta))$.
	 		\item  Moreover, if $j\leqslant t$, $$\mathcal{D}_{i,j}(F_{s,t}(\Theta))=\Delta_{i,j}(F_{s,t}(\Theta));$$ if $j>t$ and $i\geqslant -t$, $$\mathcal{D}_{i,j}(F_{s,t}(\Theta))=\Delta_{i,j}(F_{s,t}(\Theta))-F_{s,t}(\Theta)f_{j-1,t}(i)+F_{j-1,t}(\Theta_{j-1,s}(i));$$
	 		if $j>t$ and $i<-t$, 
	 		\begin{equation*}
	 			\begin{aligned}
	 				\mathcal{D}_{i,j}(F_{s,t}(\Theta))=&\;\Delta_{i,j}(F_{s,t}(\Theta))+F_{s,t}(\Theta)(f_{-i-1,t}(-j)-f_{j-1,t}(i))\\&+F_{j-1,t}(\Theta_{j-1,s}(i))-F_{-i-1,t}(\Theta_{-i-1,s}(-j)).
	 			\end{aligned}
	 		\end{equation*}
	 	\end{enumerate}
	 \end{lemma}
	 \begin{proof}
	 	By Lemma \ref{L4}, we have
	 	\begin{eqnarray}
	 		\mathcal{D}_{j,i}(F_{s,t}(\Theta))&=&\sum_{r=1}^{s-t+1}\begin{vmatrix}
	 			x_{t,\theta_1}&\cdots&x_{t,\theta_{r-1}}& \mathcal{D}_{j,i}(x_{t,\theta_r})&x_{t,\theta_{r+1}}&\cdots&x_{t,\theta_{s-t+1}}\\
	 			x_{t+1,\theta_1}\!&\cdots&\!x_{t+1,\theta_{r-1}}\!& \!\mathcal{D}_{j,i}(x_{t+1,\theta_r})\!&\!x_{t+1,\theta_{r+1}}\!&\cdots&\!x_{t+1,\theta_{s-t+1}}\\
	 			\vdots&\ddots&\vdots& \vdots&\vdots&\ddots&\vdots\\
	 			x_{s,\theta_1}&\cdots&x_{s,\theta_{r-1}}& \mathcal{D}_{j,i}(x_{s,\theta_r})&x_{s,\theta_{r+1}}&\cdots&x_{s,\theta_{s-t+1}}
	 		\end{vmatrix}\nonumber\\
	 		&=&\sum_{r=1}^{s-t+1}\begin{vmatrix}
	 			x_{t,\theta_1}&\cdots& \delta_{i,\theta_r}x_{t,j}\!-\!\delta_{-j,\theta_r}x_{t,-i}&\cdots&x_{t,\theta_{s-t+1}}\\
	 			x_{t+1,\theta_1}\!&\cdots&\! \delta_{i,\theta_r}x_{t+1,j}\!-\!\delta_{-j,\theta_r}x_{t+1,-i}\!&\cdots&\!x_{t+1,\theta_{s-t+1}}\\
	 			\vdots&\ddots& \vdots&\ddots&\vdots\\
	 			x_{s,\theta_1}&\cdots& \delta_{i,\theta_r}x_{s,j}\!-\!\delta_{-j,\theta_r}x_{s,-i}&\cdots&x_{s,\theta_{s-t+1}}
	 		\end{vmatrix}\nonumber\\
	 		&=&\sum_{r=1}^{s-t+1}\delta_{i,\theta_r}F_{s,t}(\theta_1,\cdots,\theta_{r-1},j,\theta_{r+1},\cdots,\theta_{s-t+1})\nonumber\\
	 		& &-\sum_{r=1}^{s-t+1}\delta_{-j,\theta_r}F_{s,t}(\theta_1,\cdots,\theta_{r-1},-i,\theta_{r+1},\cdots,\theta_{s-t+1})\nonumber\\
	 		&=&\Delta_{j,i}(F_{s,t}(\Theta)).\label{L5.1}
	 	\end{eqnarray}
	 	and
	 	\begin{eqnarray}
	 		\mathcal{D}_{j,j}(F_{s,t}(\Theta))&=&\sum_{r=1}^{s-t+1}\begin{vmatrix}
	 			x_{t,\theta_1}&\cdots&x_{t,\theta_{r-1}}& \mathcal{D}_{j,j}(x_{t,\theta_r})&x_{t,\theta_{r+1}}&\cdots&x_{t,\theta_{s-t+1}}\\
	 			x_{t+1,\theta_1}\!&\cdots&\!x_{t+1,\theta_{r-1}}\!&\! \mathcal{D}_{j,j}(x_{t+1,\theta_r})\!&\!x_{t+1,\theta_{r+1}}\!&\cdots&\!x_{t+1,\theta_{s-t+1}}\\
	 			\vdots&\ddots&\vdots& \vdots&\vdots&\ddots&\vdots\\
	 			x_{s,\theta_1}&\cdots&x_{s,\theta_{r-1}}& \mathcal{D}_{j,j}(x_{s,\theta_r})&x_{s,\theta_{r+1}}&\cdots&x_{s,\theta_{s-t+1}}
	 		\end{vmatrix}\nonumber\\
	 		&=&\sum_{r=1}^{s-t+1}\begin{vmatrix}
	 			x_{t,\theta_1}&\cdots& (\delta_{j,\theta_r}\!-\!\delta_{-j,\theta_r}\!-\!\delta_{j-1,t})x_{t,\theta_r}&\cdots&x_{t,\theta_{s-t+1}}\\
	 			x_{t+1,\theta_1}\!&\cdots& \!(\delta_{j,\theta_r}\!-\!\delta_{-j,\theta_r}\!-\!\delta_{j-1,t+1})x_{t+1,\theta_r}\!&\cdots&\!x_{t+1,\theta_{s-t+1}}\\
	 			\vdots&\ddots& \vdots&\ddots&\vdots\\
	 			x_{s,\theta_1}&\cdots& (\delta_{j,\theta_r}\!-\!\delta_{-j,\theta_r}\!-\!\delta_{j-1,s})x_{s,\theta_r}&\cdots&x_{s,\theta_{s-t+1}}
	 		\end{vmatrix}\nonumber\\
	 		&=&\sum_{r=1}^{s-t+1}(\delta_{j,\theta_r}-\delta_{-j,\theta_r})F_{s,t}(\Theta)-\sum_{r=t}^s\delta_{j-1,r}F_{s,t}(\Theta)\nonumber\\
	 		&=&\Delta_{j,j}(F_{s,t}(\Theta)).\label{L5.2}
	 	\end{eqnarray}
	 	Hence $(i)$ and $(ii)$ are proved. Moreover, the first equation of $(iii)$ can be proved by the same method as (\ref{L5.1}). If $j>t$ and $i\geqslant -t$, applying Lemma \ref{L4} again, we have
	 	\begin{eqnarray}
	 		& &\mathcal{D}_{i,j}(F_{s,t}(\Theta))\;=\;\begin{vmatrix}
	 			x_{t,\theta_1}&x_{t,\theta_2}&\cdots&x_{t,\theta_{s-t+1}}\\
	 			\vdots&\vdots&\ddots&\vdots\\
	 			x_{r-1,\theta_1}&x_{r-1,\theta_2}&\cdots&x_{r-1,\theta_{s-t+1}}\\
	 			\mathcal{D}_{i,j}(x_{r,\theta_1})&\mathcal{D}_{i,j}(x_{r,\theta_2})&\cdots&\mathcal{D}_{i,j}(x_{r,\theta_{s-t+1}})\\
	 			x_{r+1,\theta_1}&x_{r+1,\theta_2}&\cdots&x_{r+1,\theta_{s-t+1}}\\
	 			\vdots&\vdots&\ddots&\vdots\\
	 			x_{s,\theta_1}&x_{s,\theta_2}&\cdots&x_{s,\theta_{s-t+1}}
	 		\end{vmatrix}\nonumber\\
	 		&=&\Delta_{i,j}(F_{s,t}(\Theta))-\!\sum_{r=t}^{\min\{s,j-1\}}\!x_{r,i}\begin{vmatrix}
	 			x_{t,\theta_1}&x_{t,\theta_2}&\cdots&x_{t,\theta_{s-t+1}}\\
	 			\vdots&\vdots&\ddots&\vdots\\
	 			x_{r-1,\theta_1}&x_{r-1,\theta_2}&\cdots&x_{r-1,\theta_{s-t+1}}\\
	 			f_{j-1,r}(\theta_1)\!&\!f_{j-1,r}(\theta_2)\!&\cdots&\!f_{j-1,r}(\theta_{s-t+1})\\
	 			x_{r+1,\theta_1}&x_{r+1,\theta_2}&\cdots&x_{r+1,\theta_{s-t+1}}\\
	 			\vdots&\vdots&\ddots&\vdots\\
	 			x_{s,\theta_1}&x_{s,\theta_2}&\cdots&x_{s,\theta_{s-t+1}}
	 		\end{vmatrix}.\label{L5.3}
	 	\end{eqnarray}
	 	By Lemma \ref{L1} $(ii)$, $f_{j-1,r}(\theta)=x_{j-1,\theta}-\sum\limits_{k=r}^{j-2}x_{k,\theta}f_{j-1,k+1}(k+1)$. Thus if $j-1\leqslant s$,
	 	\begin{eqnarray}
	 		(\ref{L5.3})&=&\Delta_{i,j}(F_{s,t}(\Theta))-\!\sum_{r=t}^{\min\{s,j-1\}}\!x_{r,i}f_{j-1,r+1}(r+1)\begin{vmatrix}
	 			x_{t,\theta_1}&x_{t,\theta_2}&\cdots&x_{t,\theta_{s-t+1}}\\
	 			\vdots&\vdots&\ddots&\vdots\\
	 			x_{r-1,\theta_1}&x_{r-1,\theta_2}&\cdots&x_{r-1,\theta_{s-t+1}}\\
	 			x_{r,\theta_1}&x_{r,\theta_2}&\cdots&x_{r,\theta_{s-t+1}}\\
	 			x_{r+1,\theta_1}&x_{r+1,\theta_2}&\cdots&x_{r+1,\theta_{s-t+1}}\\
	 			\vdots&\vdots&\ddots&\vdots\\
	 			x_{s,\theta_1}&x_{s,\theta_2}&\cdots&x_{s,\theta_{s-t+1}}
	 \end{vmatrix}\nonumber\\
	 &=&\Delta_{i,j}(F_{s,t}(\Theta))-F_{s,t}(\Theta)f_{j-1,t}(i),\label{L5.4}
	 	\end{eqnarray}
	 	if $j-1>s$,
	 	\begin{eqnarray}
	 		\!\!\!\!\!\!\!(\ref{L5.3})\!\!&\!=\!&\!\Delta_{i,j}(F_{s,t}(\Theta))-F_{s,t}(\Theta)\sum_{r=t}^{s}x_{r,i}f_{j-1,r+1}(r+1)\nonumber\\
	 		\!&\! \!&\!-\sum_{r=1}^sx_{r,i}\!\begin{vmatrix}
	 			x_{t,\theta_1}&\cdots&x_{t,\theta_{s-t+1}}\\
	 			\vdots&\ddots&\vdots\\
	 			x_{r-1,\theta_1}&\cdots&x_{r-1,\theta_{s-t+1}}\\
	 			\sum\limits_{k=s+1}^{j-1}\!\!x_{k,\theta_1}f_{j-1,k+1}(k\!+\!1)\!&\cdots&\!\sum\limits_{k=s+1}^{j-1}\!\!x_{k,\theta_{s-t+1}}f_{j-1,k+1}(k\!+\!1)\\
	 			x_{r+1,\theta_1}&\cdots&x_{r+1,\theta_{s-t+1}}\\
	 			\vdots&\ddots&\vdots\\
	 			x_{s,\theta_1}&\cdots&x_{s,\theta_{s-t+1}}
	 		\end{vmatrix}.\label{L5.5}
	 	\end{eqnarray}
	 	Applying Lemma 2.1 $(ii)$ again, we have
	 	\begin{eqnarray}
	 		&\sum\limits_{r=t}^sx_{r,i}f_{j-1,r+1}(r+1)=f_{j-1,t}(i)-f_{j-1,s+1}(i),\\
	 		&\sum\limits_{k=s+1}^{j-1}x_{k,\theta_l}f_{j-1,k+1}(k+1)=f_{j-1,s+1}(\theta_l),\;l\in\overline{1,s-t+1}.
	 	\end{eqnarray}
	 	Furthermore, 
	 	\begin{eqnarray}
	 		\!\!\!\!\!\!& &F_{s,t}(\Theta)f_{j-1,s+1}(i)-\sum_{r=1}^sx_{r,i}\begin{vmatrix}
	 			x_{t,\theta_1}&\cdots&x_{t,\theta_{s-t+1}}\\
	 			\vdots&\ddots&\vdots\\
	 			x_{r-1,\theta_1}&\cdots&x_{r-1,\theta_{s-t+1}}\\
	 			f_{j-1,s+1}(\theta_1)&\cdots&f_{j-1,s+1}(\theta_{s-t+1})\\
	 			x_{r+1,\theta_1}&\cdots&x_{r+1,\theta_{s-t+1}}\\
	 			\vdots&\ddots&\vdots\\
	 			x_{s,\theta_1}&\cdots&x_{s,\theta_{s-t+1}}
	 		\end{vmatrix}\nonumber\\
	 		\!\!\!\!\!\!&=&\begin{vmatrix}
	 			x_{t,\theta_1}&\cdots&x_{t,\theta_{s-t+1}}&x_{t,i}\\
	 			x_{t+1,\theta_1}&\cdots&x_{t+1,\theta_{s-t+1}}&x_{t+1,i}\\
	 			\vdots&\ddots&\vdots&\vdots&\\
	 			x_{s,\theta_1}&\cdots&x_{s,\theta_{s-t+1}}&x_{s,i}\\
	 			f_{j-1,s+1}(\Theta_1)&\cdots&f_{j-1,s+1}(\Theta_{s-t+1})&f_{j-1,s+1}(i)
	 		\end{vmatrix}.\label{L5.6.1}
	 	\end{eqnarray}
	 	By Laplace expansion,
	 	\begin{eqnarray}
	 		(\ref{L5.6.1})=\begin{vmatrix}
	 			x_{t,\theta_1}&\cdots&x_{t,\theta_{s-t+1}}&0&0&\cdots&0&x_{t,i}\\
	 			x_{t+1,\theta_1}&\cdots&x_{t+1,\theta_{s-t+1}}&0&0&\cdots&0&x_{t+1,i}\\
	 			\vdots&\ddots&\vdots&\vdots&\vdots&\ddots&\vdots&\vdots&\\
	 			x_{s,\theta_1}&\cdots&x_{s,\theta_{s-t+1}}&0&0&\cdots&0&x_{s,i}\\
	 			x_{s+1,\theta_1}&\cdots&x_{s+1,\theta_{s-t+1}}&1&0&\cdots&0&x_{s+1,i}\\
	 			x_{s+2,\theta_1}&\cdots&x_{s+2,\theta_{s-t+1}}&x_{s+2,s+2}&1&\cdots&0&x_{s+2,i}\\
	 			\vdots&\ddots&\vdots&\vdots&\vdots&\ddots&\vdots&\vdots&\\
	 			x_{j-2,\theta_1}&\cdots&x_{j-2,\theta_{s-t+1}}&x_{j-2,s+2}&x_{j-2,s+3}&\cdots&1&x_{j-2,i}\\
	 			x_{j-1,\theta_1}&\cdots&x_{j-1,\theta_{s-t+1}}&x_{j-1,s+2}&x_{j-1,s+3}&\cdots&x_{j-1,j-1}&x_{j-1,i}.
	 		\end{vmatrix},\label{L5.6}
	 	\end{eqnarray}
	 	(\ref{L5.5})-(\ref{L5.6}) imply that
	 	\begin{eqnarray}
	 		(\ref{L5.3})=\Delta_{i,j}(F_{s,t}(\Theta))-F_{s,t}(\Theta)f_{j-1,t}(i)+F_{j-1,t}(\Theta_{j-1,s}(i)).
	 	\end{eqnarray}
	 	Hence the second assertion of $(iii)$ follows. If $j-1>t$ and $i<-t$,
	 	\begin{eqnarray}
	 		\!\!\!\!& &\mathcal{D}_{i,j}(F_{s,t}(\Theta))\;=\;\begin{vmatrix}
	 			x_{t,\theta_1}&x_{t,\theta_2}&\cdots&x_{t,\theta_{s-t+1}}\\
	 			\vdots&\vdots&\ddots&\vdots\\
	 			x_{r-1,\theta_1}&x_{r-1,\theta_2}&\cdots&x_{r-1,\theta_{s-t+1}}\\
	 			\mathcal{D}_{i,j}(x_{r,\theta_1})&\mathcal{D}_{i,j}(x_{r,\theta_2})&\cdots&\mathcal{D}_{i,j}(x_{r,\theta_{s-t+1}})\\
	 			x_{r+1,\theta_1}&x_{r+1,\theta_2}&\cdots&x_{r+1,\theta_{s-t+1}}\\
	 			\vdots&\vdots&\ddots&\vdots\\
	 			x_{s,\theta_1}&x_{s,\theta_2}&\cdots&x_{s,\theta_{s-t+1}}
	 		\end{vmatrix}\nonumber\\
	 		\!\!\!\!&=&\Delta_{i,j}(F_{s,t}(\Theta))-\!\sum_{r=t}^{\min\{s,j-1\}}\!x_{r,i}\begin{vmatrix}
	 			x_{t,\theta_1}&x_{t,\theta_2}&\cdots&x_{t,\theta_{s-t+1}}\\
	 			\vdots&\vdots&\ddots&\vdots\\
	 			x_{r-1,\theta_1}&x_{r-1,\theta_2}&\cdots&x_{r-1,\theta_{s-t+1}}\\
	 			f_{j-1,r}(\theta_1)\!&\!f_{j-1,r}(\theta_2)\!&\cdots&\!f_{j-1,r}(\theta_{s-t+1})\\
	 			x_{r+1,\theta_1}&x_{r+1,\theta_2}&\cdots&x_{r+1,\theta_{s-t+1}}\\
	 			\vdots&\vdots&\ddots&\vdots\\
	 			x_{s,\theta_1}&x_{s,\theta_2}&\cdots&x_{s,\theta_{s-t+1}}
	 		\end{vmatrix}\nonumber\\
	 		\!\!\!\!& &+\!\sum_{r=t}^{\min\{s,-i-1\}}\!x_{r,-j}\begin{vmatrix}
	 			x_{t,\theta_1}&x_{t,\theta_2}&\cdots&x_{t,\theta_{s-t+1}}\\
	 			\vdots&\vdots&\ddots&\vdots\\
	 			x_{r-1,\theta_1}&x_{r-1,\theta_2}&\cdots&x_{r-1,\theta_{s-t+1}}\\
	 			f_{-i-1,r}(\theta_1)\!&\!f_{-i-1,r}(\theta_2)\!&\cdots&\!f_{-i-1,r}(\theta_{s-t+1})\\
	 			x_{r+1,\theta_1}&x_{r+1,\theta_2}&\cdots&x_{r+1,\theta_{s-t+1}}\\
	 			\vdots&\vdots&\ddots&\vdots\\
	 			x_{s,\theta_1}&x_{s,\theta_2}&\cdots&x_{s,\theta_{s-t+1}}
	 		\end{vmatrix}.\label{L5.7}
	 	\end{eqnarray}
	 	By the same calculation as (\ref{L5.3})-(\ref{L5.6}), we have
	 	\begin{eqnarray}
	 		(\ref{L5.7})&=&\Delta_{i,j}(F_{s,t}(\Theta))+F_{s,t}(\Theta)(f_{-i-1,t}(-j)-f_{j-1,t}(i))\nonumber\\
	 		& &+F_{j-1,t}(\Theta_{j-1,s}(i))-F_{-i-1,t}(\Theta_{-i-1,s}(-j)).
	 	\end{eqnarray}
	 	Therefore, the last assertion of $(iii)$ is proved.
	 \end{proof}\pse
 
	 We set
	 \begin{eqnarray}
	 	\mathcal{T}_{0}\!=\!\bigl\{\big(\mathcal{D}_{0,n+1}\!+\!\frac{1}{2}f_{n,0}(0)\big)^{\iota_n}\!\cdots\!\big(\mathcal{D}_{0,2}\!+\!\frac{1}{2}f_{1,0}(0)\big)^{\iota_1}\!\big(\mathcal{D}_{0,1}\!+\!\frac{1}{2}f_{0,0}(0)\big)^{\iota_0}(1)\mid\iota_0,\cdots\!,\iota_n\in\!\{0,1\}\bigr\}\nonumber
	\end{eqnarray}
	and
	\begin{eqnarray}
	 	\mathcal{T}_t=\{F_{s,t}(\Theta)\mid s\in\overline{t,n},\;\Theta\in\mathbb{J}_{s,t}\}\cup\{1\}\;\;\for\;\;t\in\overline{1,n}.
	 \end{eqnarray}
	 
	 \begin{lemma}\label{L6}
	 	 $\mathcal{T}_{0}$ is a basis of $V_{2n+3}\big(\frac{1}{2}\sum_{i=1}^{n+1}\varepsilon_i\big)$ as a vector space over $\mathbb{C}$.
	 \end{lemma}
	 \begin{proof}
	 	In this case, we have
	 	\begin{eqnarray}
	 		& &\big(\mathcal{D}_{0,n+1}+\frac{1}{2}f_{n,0}(0)\big)^{\iota_{n+1}}\cdots\big(\mathcal{D}_{0,2}+\frac{1}{2}f_{1,0}(0)\big)^{\iota_2}\big(\mathcal{D}_{0,1}+\frac{1}{2}f_{0,0}(0)\big)^{\iota_1}(1)\nonumber\\
	 		&=&A_{0,n+1}^{\iota_{n+1}}\cdots A_{0,2}^{\iota_2}A_{0,1}^{\iota_1}(1),
	 	\end{eqnarray}
	 	thus $\mathcal{T}_{0}$ is a subset of $V_{2n+3}(\frac{1}{2}\sum_{i=1}^{n+1}\varepsilon_i)$. By Weyl's formula, we can calculate the dimension of $V_{2n+3}(\frac{1}{2}\sum_{i=1}^{n+1}\varepsilon_i)$,
	 	\begin{eqnarray}
	 		\dim V_{2n+3}(\frac{1}{2}\sum_{i=1}^{n+1}\varepsilon_i)= 2^{n+1}=|\mathcal{T}_{n+1}|.
	 	\end{eqnarray}
	 	
       It lefts to show that $\mathcal{T}_{0}$ is linearly independent. For $k\in\overline{1,n+1}$, let $\mathcal{T}_{0,0}=\{1\}$ and $$\mathcal{T}_{0,k}\!=\!\bigl\{\!\big(\mathcal{D}_{0,k}+\frac{1}{2}f_{k-1,0}(0)\big)^{\iota_{k\!-\!1}}\cdots\big(\mathcal{D}_{0,2}+\frac{1}{2}f_{1,0}(0)\big)^{\iota_1}\!\big(\mathcal{D}_{0,1}+\frac{1}{2}f_{0,0}(0)\big)^{\iota_0}(1)\mid \iota_0,\cdots\!,\iota_k\in\!\{0,1\}\!\bigr\}$$ be the subsets of $\mathcal{T}_0$. In fact, we have $\mathcal{T}_{0,n+1}=\mathcal{T}_0$ and
        \begin{eqnarray}
        	\mathcal{T}_{0,k+1}=\mathcal{T}_{0,k}\cup \big(\mathcal{D}_{0,k+1}+\frac{1}{2}f_{k,0}(0)\big)(\mathcal{T}_{0,k})\;\for\;\text{any}\;k\in\overline{0,n}.
        \end{eqnarray}
       It is clearly that $\mathcal{T}_{0,0}$ is linearly independent, suppose that $\mathcal{T}_{0,k}=\{g_1,g_2,\cdots,g_{2^k}\}$ is linearly independent for $k\in\overline{0,n}$. If there exist $a_1,a_2,\cdots,a_{2^k},b_1,b_2,\cdots,b_{2^k}\in\mathbb{C}$ such that
       \begin{eqnarray}
       	\sum_{i=1}^{2^k}\Big(a_ig_i+b_i\big(\mathcal{D}_{0,k+1}+\frac{1}{2}f_{k,0}(0)\big)(g_i)\Big)=0.
       \end{eqnarray}
       According to (\ref{5.45}),
       \begin{eqnarray}
       	& &\partial_{x_{0,k}}\Big(\sum_{i=1}^{2^k}(a_ig_i+b_i\big(\mathcal{D}_{0,k+1}+\frac{1}{2}f_{k,0}(0)\big)(g_i))\Big)\\&=&\sum_{i=1}^{2^k}\Big(a_i\partial_{x_{0,k}}(g_i)+b_i\partial_{x_{0,k}}\big(\big(\mathcal{D}_{0,k+1}+\frac{1}{2}f_{k,0}(0)\big)(g_i)\big)\Big)\\
       	&=&\frac{1}{2}\sum_{i=1}^{2^k}b_ig_i=0,
       \end{eqnarray}
       which implies $b_1=b_2=\cdots=b_{2^k}=0$, hence $a_1=a_2=\cdots=a_{2^k}=0$ as well. By induction on $k$, $\mathcal{T}_k$ is linearly independent for all $k\in \overline{0,n+1}$. 
	 \end{proof}\pse

	 For $0\leqslant t\leqslant s\leqslant n$, recall that
	 \begin{eqnarray}
	 	\mathbb{J}_{s,t}=\{(\theta_1,\cdots,\theta_{s-t+1})\in\mathbb{Z}^{s-t+1}\mid -(n+1)\leqslant \theta_1<\cdots<\theta_{s-t+1}\leqslant s\},
	 \end{eqnarray}
	 Furthermore, we set
	 \begin{eqnarray}
	 	\mathbb{J}^*_{s,t}=\{(\theta_1,\cdots,\theta_{s-t+1})\in\mathbb{J}_{s,t}\mid \theta_1=-(n+1)\},
	 \end{eqnarray}

	 \begin{definition}\label{D7}
	 	Let $S_1,S_2,\cdots,S_k,S$ be subsets of $\mathscr{A}(n)$, the product of $S_1,S_2,\cdots,S_k$, denoted by $S_1S_2\cdots S_k$ or $\prod_{r=1}^kS_r$, defined to be the subsets of $\mathscr{A}(n)$ as follows:
	 	\begin{eqnarray}
	 		\{f_1f_2\cdots f_k\mid f_r\in S_r\;\for\;r\in\overline{1,k}\}.
	 	\end{eqnarray}
	 	Furthermore, we denote $\prod_{r=1}^kS$ by $S^k$ briefly.
	 \end{definition}

	 Suppose that $\lambda=\sum_{i=1}^{n+1}\mu_i\varepsilon_i$ is a dominant integral weight of $\mathfrak{o}(2n+3)$. Denote
	 \begin{eqnarray}
	 	\mathcal{S}(\lambda)=\mathcal{T}_0^{2\mu_1}\prod_{t=1}^n\mathcal{T}_t^{\mu_{t+1}-\mu_t}
	 \end{eqnarray} 
	 (cf. (\ref{2.3})). Note that 
	 \begin{eqnarray}
	 	&g\in\mathcal{T}_{0,n+1}\setminus \mathcal{T}_{0,n},\;F_{n,t}(\Theta),~(0\leqslant t\leqslant n,~\Theta\in\mathbb{J}_{n,t}\setminus\mathbb{J}_{n,t}^*)\\
	 	&\text{~and~}F_{s,t}(\Theta'),~(0\leqslant t\leqslant s<n,~\Theta'\in\mathbb{J}_{s,t}^*)
	 \end{eqnarray}
	 are homogeneous polynomials with degree $1$ in variables $\{x_{n,j}\mid j\in\overline{-n,n}\}$,
	 \begin{eqnarray}
	 	F_{n,t}(\Theta''),~(0\leqslant t\leqslant n,~\Theta''\in\mathbb{J}_{n,t}^*)
	 \end{eqnarray}
	 are homogeneous with degree $2$ in $\{x_{n,j}\mid j\in\overline{-n,n}\}$, and
	 \begin{eqnarray}
	 	g\in\mathcal{T}_{0,n}\text{~and~}F_{s,t}(\Theta'''),~(0\leqslant t\leqslant s<n,~\Theta'''\in\mathbb{J}_{s,t}\setminus\mathbb{J}_{s,t}^*)
	 \end{eqnarray}
	 are independent of $\{x_{n,j}\mid j\in\overline{-n,n}\}$. Hence the elements of $\mathcal{T}_0,\mathcal{T}_1,\cdots,\mathcal{T}_{n}$ and $\mathcal{S}(\lambda)$ are all homogeneous in $\{x_{n,j}\mid j\in\overline{-n,n}\}$. Let $k\in\mathbb{N}$, set
	 \begin{eqnarray}
	 	S(\lambda)_k=\bigl\{f\in\mathcal{S}(\lambda)\mid \sum_{j=-n}^nx_{n,j}\partial_{n,j}(f)=kf\bigr\}.
	 \end{eqnarray}
	 It is easy to see that $\mathcal{S}(\lambda)_k$ is empty when $k>2\mu_{n+1}$ and 
	 \begin{eqnarray}
	 	\mathcal{S}(\lambda)=\bigcup\limits_{k=0}^{2\mu_{n+1}}\mathcal{S}(\lambda)_k. \label{S}
	 \end{eqnarray}
	 \begin{theorem}\label{T8}
	 	As a vector space, $V_{2n+3}(\lambda)$ is spanned by $S(\lambda)$. Moreover, the homogeneous subspace $(V_{2n+3}(\lambda))_k$ is spanned by $\mathcal{S}(\lambda)_k$.
	 \end{theorem}
	 \begin{proof}
	 	The second conclusion is deduced from the first one and (\ref{S}). Note that $1$ is the unique polynomial in $\mathscr{A}_{(n)}$ annihilated by positive root vectors $\{A_{j,i}\mid 0\leqslant |i|<j\leqslant n+1\}$ (up to a scalar). Next we prove that $\text{Span}\:S(\lambda)$ is an $\mathfrak{o}(2n+3)$-submodule of $\mathscr{A}_{(n)}$. Then by Weyl's theorem, $\text{Span}\:S(\lambda)$ is irreducible since it is finite dimensional and has unique singular vector, hence $V_{2n+3}(\lambda)= \text{Span}\:S(\lambda)$.\psp
	 	
	 	Take any 
	 	\begin{eqnarray}
	 		f=\prod_{t=0}^nf_t\in\mathcal{S}(\lambda),
	 	\end{eqnarray}
	 	with $f_0\in\mathcal{T}_0^{2\mu_1},\;f_t\in\mathcal{T}_t^{\mu_{t+1}-\mu_t}\;\for\;t\in\overline{1,n}$. We write
	 	\begin{eqnarray}
	 		f_0=\prod_{r=1}^{2\mu_1}g_r\;\;\;\text{and}\;\;\;f_t=\prod_{s=t}^n\prod_{\Theta\in\mathbb{J}_{s,t}}F_{s,t}(\Theta)^{\alpha_{s,t}(\Theta)}\;\for\;t\in\overline{1,n},
	 	\end{eqnarray}
	 	where $g_1,g_2,\cdots,g_{2\mu_1}\in\mathcal{T}_0$ and $\alpha_{s,t}(\Theta)\in\mathbb{N}$ with
	 	\begin{eqnarray}
           \sum_{s=t}^n\sum_{\Theta\in\mathbb{J}_{s,t}}\alpha_{s,t}(\Theta)\leqslant \mu_{t+1}-\mu_t\;\for\;t\in\overline{1,n}.
	 	\end{eqnarray}

	 	For $0<-i<j\leqslant n+1$, by Lemma \ref{L5} $(iii)$,
	 	\begin{eqnarray}
	 		A_{i,j}(f)&=&\mathcal{D}_{i,j}(f)+\mu_1(f_{j-1,0}(i)-f_{-i-1,0}(-j))f\\
	 		& &+\sum_{t=1}^{j-1}(\mu_{t+1}-\mu_{t})f_{j-1,t}(i)f-\sum_{t=1}^{-i-1}(\mu_{t+1}-\mu_{t})f_{-i-1,t}(-j)f\\
	 		&=&f\sum_{t=1}^{j-1}\big(\mu_{t+1}-\mu_t-\sum_{s=t}^n\sum_{\Theta\in\mathbb{J}_{s,t}}\alpha_{s,t}(\Theta)\big)f_{j-1,t}(i)\nonumber\\
	 		&&-f\sum_{t=1}^{-i-1}\big(\mu_{t+1}-\mu_t-\sum_{s=t}^n\sum_{\Theta\in\mathbb{J}_{s,t}}\alpha_{s,t}(\Theta)\big)f_{-i-1,t}(-j)\nonumber\\
	 		& &+f\sum_{1\leqslant t\leqslant s\leqslant n\atop \Theta\in\mathbb{J}_{s,t}}\alpha_{s,t}(\Theta)F_{s,t}(\Theta)^{-1}\Delta_{i,j}(F_{s,t}(\Theta))\nonumber\\
	 		& &+f\sum_{t=1}^{j-1}\sum_{s=t}^n\sum_{\Theta\in\mathbb{J}_{s,t}}\alpha_{s,t}(\Theta)F_{s,t}(\Theta)^{-1}F_{j-1,t}(\Theta_{j-1,s}(i))\nonumber\\
	 		& &-f\sum_{t=1}^{-i-1}\sum_{s=t}^n\sum_{\Theta\in\mathbb{J}_{s,t}}\alpha_{s,t}(\Theta)F_{s,t}(\Theta)^{-1}F_{-i-1,t}(\Theta_{-i-1,s}(-j))\nonumber\\
	 		& &+\mu_1\big(f_{j-1,0}(i)-f_{-i-1,0}(-j)\big)f+\mathcal{D}_{i,j}(f_0)\prod_{t=1}^nf_t.
	 	\end{eqnarray}
	 	Note that 
	 	\begin{eqnarray}
	 		\big(\mu_{t+1}-\mu_t-\sum_{s=t}^n\sum_{\Theta\in\mathbb{J}_{s,t}}\alpha_{s,t}(\Theta)\big)f_{j-1,t}(i)f_t\in\text{Span}\:\mathcal{T}_t^{\mu_{t+1}-\mu_t},\;\forall t\in\overline{1,j-1},
	 	\end{eqnarray}
	 	\begin{eqnarray}
	 		\big(\mu_{t+1}-\mu_t-\sum_{s=t}^n\sum_{\Theta\in\mathbb{J}_{s,t}}\alpha_{s,t}(\Theta)\big)f_{-i-1,t}(-j)f_t\in\text{Span}\:\mathcal{T}_t^{\mu_{t+1}-\mu_t},\;\forall t\in\overline{1,-i-1},
	 	\end{eqnarray}
	 	\begin{eqnarray}
	 		\alpha_{s,t}(\Theta)F_{s,t}(\Theta)^{-1}\Delta_{i,j}(F_{s,t}(\Theta))f_t\in\text{Span}\:\mathcal{T}_t^{\mu_{t+1}-\mu_t},\;\forall t\in\overline{1,n},
	 	\end{eqnarray}
	 	\begin{eqnarray} \alpha_{s,t}(\Theta)F_{s,t}(\Theta)^{-1}F_{j-1,t}(\Theta_{j-1,s}(i))f_t\in\text{Span}\:\mathcal{T}_t^{\mu_{t+1}-\mu_t},\;\forall t\in\overline{1,j-1}
	 \end{eqnarray} 
	 and 
	 \begin{eqnarray} \alpha_{s,t}(\Theta)F_{s,t}(\Theta)^{-1}F_{-i-1,t}(\Theta_{-i-1,s}(-j))f_t\in\text{Span}\:\mathcal{T}_t^{\mu_{t+1}-\mu_t},\;\forall t\in\overline{1,-i-1}.
	 	\end{eqnarray}
	 	Moreover,
	 	\begin{eqnarray}
	 		& &\mathcal{D}_{i,j}(f_0)+\mu_1\big(f_{j-1,0}(i)-f_{-i-1,0}(-j)\big)f_0\nonumber\\
	 		&=&\sum_{r=1}^{2\mu_1}g_1\cdots g_{r-1}\mathcal{D}_{i,j}(g_r)g_{r+1}\cdots g_{2\mu_1}+\mu_1\big(f_{j-1,0}(i)-f_{-i-1,0}(-j)\big)g_1g_2\cdots g_{2\mu_1}\nonumber\\
	 		&=&\sum_{r=1}^{2\mu_1}g_1\cdots g_{r-1}\Big(\mathcal{D}_{i,j}(g_r)+\frac{1}{2}\mu_1\big(f_{j-1,0}(i)-f_{-i-1,0}(-j)\big)g_r\Big)g_{r+1}\cdots g_{2\mu_1}
	 	\end{eqnarray}
	 	By Lemma \ref{L6} and applying (\ref{4.46}) with $\lambda=\frac{1}{2}\sum_{i=1}^{n+1}\varepsilon_i$, we have
	 	\begin{eqnarray}
	 		\Big(\mathcal{D}_{i,j}+\frac{1}{2}\mu_1\big(f_{j-1,0}(i)-f_{-i-1,0}(-j)\big)\Big)(g_r)\in V_{2n+3}\big(\frac{1}{2}\sum_{i=1}^{n+1}\varepsilon_i\big)=\text{Span}\:\mathcal{T}_0.
	 	\end{eqnarray}
	 	Hence $\mathcal{D}_{i,j}(f_0)+\mu_1(f_{j-1,0}(i)-f_{-i-1,0}(-j))f_0\in\text{Span}\:\mathcal{T}_0^{2\mu_1}$. Therefore, $A_{i,j}(f)\in\text{Span}\:S(\lambda)$.
	 	
	 	For $0\leqslant i<j\leqslant n+1$, by Lemma \ref{L5} $(iii)$,
	 	\begin{eqnarray}
	 		\!\!A_{i,j}(f)&=&f\sum_{t=i}^{j-1}\big(\mu_{t+1}-\mu_t-\sum_{s=t}^n\sum_{\Theta\in\mathbb{J}_{s,t}}\alpha_{s,t}(\Theta)\big)f_{j-1,t}(i)\nonumber\\
	 		& &+f\!\sum_{1\leqslant t\leqslant s\leqslant n\atop \Theta\in\mathbb{J}_{s,t}}\!\alpha_{s,t}(\Theta)F_{s,t}(\Theta)^{-1}\Delta_{i,j}(F_{s,t}(\Theta))\nonumber\\
	 		& &+f\sum_{t=1}^{j-1}\sum_{s=t}^n\sum_{\Theta\in\mathbb{J}_{s,t}}\alpha_{s,t}(\Theta)F_{s,t}(\Theta)^{-1}F_{j-1,t}(\Theta_{j-1,s}(i))\nonumber\\
	 		& &+\delta_{i,0}\mu_1\big(f_{j-1,0}(i)-f_{-i-1,0}(-j)\big)f+\mathcal{D}_{i,j}(f_0)\prod_{t=1}^nf_t.\label{T8.1}
	 	\end{eqnarray}
	 	
	 	For $0\leqslant |i|<j\leqslant n+1$, by Lemma \ref{L5} $(i)$ and $(ii)$, we have
	 	\begin{eqnarray}
	 		A_{j,i}(f)&=&\mathcal{D}_{j,i}(f)=\mathcal{D}_{j,i}(f_0)\prod_{t=1}^nf_t+f\!\!\sum_{1\leqslant t\leqslant s\leqslant n\atop \Theta\in\mathbb{J}_{s,t}}\!\!\alpha_{s,t}(\Theta)F_{s,t}(\Theta)^{-1}\mathcal{D}_{j,i}(F_{s,t}(\Theta))\nonumber\\
	 			&=&f\!\!\sum_{1\leqslant t\leqslant s\leqslant n\atop \Theta\in\mathbb{J}_{s,t}}\!\!\alpha_{s,t}(\Theta)F_{s,t}(\Theta)^{-1}\Delta_{j,i}(F_{s,t}(\Theta))+\mathcal{D}_{j,i}(f_0)\prod_{t=1}^nf_t\label{T8.2}
	 	\end{eqnarray}
	 	and
	 	\begin{eqnarray}
	 		A_{j,j}(f)&=&\mathcal{D}_{j,j}(f)+\mu_jf\nonumber\\
	 		&=&f\!\!\sum_{1\leqslant t\leqslant s\leqslant n\atop \Theta\in\mathbb{J}_{s,t}}\!\!\alpha_{s,t}(\Theta)F_{s,t}(\Theta)^{-1}\Delta_{j,i}(F_{s,t}(\Theta))+\mathcal{D}_{j,j}(f_0)\prod_{t=1}^nf_t+\mu_jf.\label{T8.3}
	 	\end{eqnarray}
	    Similarly to the formal situation, we can prove (\ref{T8.1})-(\ref{T8.3}) are contained in  $\text{Span}\:\mathcal{S}(\lambda)$. Hence $\text{Span}\:\mathcal{S}(\lambda)$ is an $\mathfrak{o}(2n+3)$-submodule.
	 \end{proof}\pse

	 \begin{remark}\label{R9}
	 	Set
	 	\begin{eqnarray}
	 		\mathcal{T}_0^{\star}=\{F_{s,0}(\Theta)\mid s\in\overline{0,n},\Theta\in\mathbb{J}_{s,0}\}\cup\{1\},
	 	\end{eqnarray}
	 	We can prove that $V_{2n+3}(\sum_{i=1}^{n+1}\varepsilon)=\text{Span}\:\mathcal{T}_0^{\star}=\text{Span}\:\mathcal{T}_0^2$ by the proof of Theorem 3.8. Thus
	 	\begin{eqnarray}
	 		V_{2n+3}(\lambda)=\left\lbrace \begin{array}{ll}
	 			\text{Span}\:(\mathcal{T}_0^{\star})^{\mu_1}\prod_{t=1}^{n}\mathcal{T}_t^{\mu_{t+1}-\mu_t},&\text{if}\;\mu_1\in\mathbb{N};\\
	 			\\
	 			\text{Span}\:\mathcal{T}_0(\mathcal{T}_0^{\star})^{\mu_1-1/2}\prod_{t=1}^{n}\mathcal{T}_t^{\mu_{t+1}-\mu_t},&\text{if}\;\mu_1-\frac{1}{2}\in\mathbb{N}.
	 		\end{array}\right.
	 	\end{eqnarray}
	 \end{remark}

		In the rest of this section, we present two examples of irreducible modules with particular dominant highest weights. Firstly consider the case when $\mu_1=\mu_2=\cdots=\mu_{n-1}=0$ as an example. For simplicity, let
		\begin{eqnarray}
			&x_i=x_{n,i},\;\;y_j=x_{n-1,j},\;\;\for\;\;i\in\overline{-n,n},\;\;j\in\overline{-n+1,n-1},\nonumber\\
			&\text{and }k_{1}=\mu_{n+1}-\mu_n,~k_2=\mu_n\in\mathbb{N}.
		\end{eqnarray}
		Denote
		\begin{eqnarray}
			&D_x=\sum\limits_{s=-n}^nx_{s}\partial_{x_s},\;D_y=\sum\limits_{s=-n+1}^{n-1}y_s\partial_{y_s},\;\eta_x=-x_{-n-1}=\frac{1}{2}x_0^2+\sum\limits_{s=1}^{n}x_sx_{-s},\\
			& \eta_y=-y_{-n}=\frac{1}{2}y_0^2+\sum\limits_{s=1}^{n-1}y_sy_{-s},\;\eta_{xy}=-y_{-n-1}=\sum\limits_{t=-n+1}^{n-1}x_ty_{-t}+x_{-n}-x_n\eta_y.
		\end{eqnarray}

		Theorem \ref{T3} presents the representation formulas of $\mathfrak{o}(2n+3)$:
		For $0\leqslant |i|\leqslant |j|\leqslant n-1$,
\begin{equation}A_{j,i}|_{V_{2n+3}(\lambda)}=x_{j}\partial_{x_{i}}-x_{-i}\partial_{x_{-j}}+y_{j}\partial_{y_{i}}-y_{-i}\partial_{y_{-j}},\end{equation}
\begin{equation}	A_{n,i}|_{V_{2n+3}(\lambda)}=x_{n}\partial_{x_{i}}-x_{-i}\partial_{x_{-n}}+\partial_{y_i},~A_{n+1,\pm n}|_{V_{2n+3}(\lambda)}=\partial_{x_{\pm n}},\end{equation}
\begin{equation}A_{n+1,i}|_{V_{2n+3}(\lambda)}=\partial_{x_i},~A_{n+1,n+1}|_{V_{2n+3}(\lambda)}=-D_x+k_2+k_{1},\end{equation}	
\begin{eqnarray}
			&A_{n,n}|_{V_{2n+3}(\lambda)}=x_{n}\partial_{x_{n}}-x_{-n}\partial_{x_{-n}}-D_y+k_2,\\
			&A_{i,n}|_{V_{2n+3}(\lambda)}=x_i\partial_{x_{n}}-x_{-n}\partial_{x_{-i}}-y_i(D_y-k_2)+\eta_y\partial_{y_{-i}},\\
			&A_{n,n+1}|_{V_{2n+3}(\lambda)}=-x_n(D_x-k_{1})+\eta_x\partial_{x_{-n}}-\sum\limits_{t=-n+1}^{n-1}(x_t-x_ny_t)\partial_{y_t},\\
			&A_{i,n+1}|_{V_{2n+3}(\lambda)}=-x_i(D_x-k_1)+\eta_x\partial_{x_{-i}}\nonumber\\
			&-y_i\sum\limits_{t=-n+1}^{n-1}(x_t-x_ny_t)\partial_{y_t}+k_2(x_i-x_ny_i)+\eta_{xy}\partial_{y_{-i}},\\
			&A_{-n,n+1}|_{V_{2n+3}(\lmd)}=-x_{-n}(D_x-k_{1})+\eta_x\partial_{x_{n}}\nonumber\\
			&+\eta_y\sum\limits_{t=-n+1}^{n-1}(x_t-x_ny_t)\partial_{y_t}+k_2(x_{-n}+x_n\eta_y)-\eta_{xy}(D_y-k_2).
		\end{eqnarray}
		In this case, $\mathcal{S}(\lambda)$ becomes
		\begin{eqnarray}
			\mathcal{S}(\lambda)&=&\big\{\prod\limits_{n-1\leqslant t\leqslant s\leqslant n}\prod\limits_{\Theta\in \mathbb{J}_{s,t}} F_{s,t}(\Theta)^{\alpha_{s,t}(\Theta)} \mid \alpha_{s,t}(\theta)\in\mathbb{N};\nonumber\\
			& &\sum\limits_{\Theta\in\mathbb{J}_{n,n} }\alpha_{n,n}(\Theta)\leqslant k_1\text{ and }\sum\limits_{s=n\!-\!1}^n\!\sum\limits_{\Theta\in\mathbb{J}_{s,n\!-\!1} }\!\!\alpha_{s,n-1}(\Theta)\leqslant k_2\big\}.\label{6.10}
		\end{eqnarray}
		Since $\mu_1=\mu_2=\cdots=\mu_{n-1}=0$. (\ref{6.10}) shows that $\mathcal{S}(\lambda)$ only involves $4n$ variables $\{x_{i},y_j\mid i\in\overline{-n,n},j\in\overline{-n+1,n-1}\}$, which by Theorem \ref{T8} means
		\begin{eqnarray}
			V_{2n+3}(\lambda)\subset\mathbb{C}[x_i,y_j~|~i\in\overline{-n,n},j\in\overline{-n+1,n-1}].
		\end{eqnarray}
		Then the set
		\begin{eqnarray}
			& &\big\{\prod\limits_{i=-n-1}^nx_i^{\alpha_i}\prod_{-n-1\leqslant t<s\leqslant n}(x_sy_t-x_ty_s)^{\gamma_{s,t}}\prod_{j=-n-1}^{n-1}y_j^{\beta_j} \mid \alpha_i,\beta_j,\gamma_{s,t}\in\mathbb{N};\nonumber\\
			& &\sum\limits_{i=-n-1}^n\alpha_i\leqslant k_1;~\sum_{-n-1\leqslant t<s\leqslant n}\gamma_{s,t}+\sum\limits_{j=-n-1}^{n-1}\beta_j\leqslant k_2\big\}
		\end{eqnarray}
		spans $V_{2n+3}(\lambda)$ by (\ref{6.10}).\psp

 Next, we consider the case $\lambda=k(\varepsilon_{i+1}+\varepsilon_{i+2}+\cdots+\varepsilon_{n+1})$ for some $i\in\overline{1,n-1}$. For simplicity, we denote
       \begin{eqnarray}
       	F_j(\Theta)=\begin{vmatrix}
       		x_{i,\theta_1}&x_{i,\theta_2}&\cdots&x_{i,\theta_{j-i+1}}\\
       		x_{i+1,\theta_1}\!&\!x_{i+1,\theta_2}\!&\cdots&\!x_{i+1,\theta_{j-i+1}}\\
            \vdots&\vdots&\ddots&\vdots\\
            x_{j,\theta_1}&x_{j,\theta_2}&\cdots&x_{j,\theta_{j-i+1}}\\		
       	\end{vmatrix}
       \end{eqnarray}
       for $j\in\overline{i,n}$ and $\Theta=(\theta_1,\theta_2,\cdots,\theta_{j-i+1})\in\mathbb{J}_{j,i}$. 
       By Theorem \ref{T8}, $V_{2n+3}(\lambda)$ is spanned by
       \begin{eqnarray}
       	\mathcal{T}_{i}^k=\big\{\prod\limits_{j=i}^n\prod\limits_{\Theta\in\mathbb{J}_{j,i}}F_j(\Theta)^{\alpha_j(\Theta)}~|~\alpha_j(\Theta)\in\mathbb{N};~\sum\limits_{j=i}^n\sum\limits_{\Theta\in\mathbb{J}_{j,i}}\alpha_{j}(\Theta)\leqslant k\big\}.
       \end{eqnarray}
       So $V_{2n+3}(\lambda)$ dose not involve the variables $\{x_{s,t}~|~0\leqslant |t|\leqslant s\leqslant i-1\}$. The representation of $\mathfrak{o}(2n+3)$ is given in (\ref{4.42})-(\ref{4.46}) with $\mu_1=\cdots=\mu_i=0$, $\mu_{i+1}=\cdots=\mu_{n+1}=k$ and all the ingredients containing $\{x_{s,t}~|~0\leqslant |t|\leqslant s\leqslant i-1\}$ are dropped.

		\section{Construction for Even Orthogonal Lie Algebras}
		In this section, we turn to deal with the even case, in which we obtain the parallel results as in the odd case. As well as before, we also put two special cases to show these formulas in more intuitive ways.
		
Let $\msr B_{(1)}=\mathbb{C}[x_1,x_{-1}]$. For any $\mu_1,\mu_2\in\mbb C$, we have the following representation of $\mathfrak{o}(4)$ on $\msr B_{(1)}$:
	\begin{eqnarray}
			&A_{1,1}|_{\msr B_{(1)}}=x_1\partial_{x_1}-x_{-1}\partial_{x_{-1}}+\mu_1,~A_{2,2}|_{\msr B_{(1)}}=-x_1\partial_{x_1}-x_{-1}\partial_{x_{-1}}+\mu_2,\label{3.2}\\
			&A_{2,1}|_{\msr B_{(1)}}=\partial_{x_1},~A_{2,-1}|_{\msr B_{(1)}}=\partial_{x_{-1}},\label{3.3}\\
			&A_{1,2}|_{\msr B_{(1)}}=-x_1^2\partial_{x_1}+(\mu_2-\mu_1)\partial_{x_1},~A_{-1,2}|_{\msr B_{(1)}}=-x_{-1}^2\partial_{x_{-1}}+(\mu_2+\mu_1)\partial_{x_{-1}}.\label{3.4}
	\end{eqnarray}
Then\begin{equation}M_1=U(\mathfrak{o}(4))(1)\end{equation}
forms a highest-weight irreducible $\mathfrak{o}(4)$-module with highest weight $\lambda=\mu_1\varepsilon_1+\mu_2\varepsilon_2$. When $\mu_2\pm\mu_1\in\mathbb{N}$,
\begin{eqnarray}
		M_1=\sum\limits_{i=0}^{\mu_2-\mu_1}\sum\limits_{j=0}^{\mu_2+\mu_1}\mathbb{C}x_1^ix_{-1}^j
\end{eqnarray}
is a finite-dimensional  irreducible $\mathfrak{o}(4)$-module.

		Comparing those formulas with the representation of $\mathfrak{o}(5)$ in (\ref{4.17})-(\ref{4.22}), (\ref{3.2})-(\ref{3.4}) are just obtained from those of $\mathfrak{o}(5)$ by deleting $A_{0,\pm 1},~A_{0,\pm 2}$ and forcing $x_0,~y$ to be $0$. Moreover, according to (\ref{1.5}), (\ref{1.6}) and (\ref{1.10})-(\ref{1.12}), we conclude that the representation formulas of $\mathfrak{o}(2n+2)$ is obtained from $\mathfrak{o}(2n+3)$ by deleting $A_{0,\pm 1},A_{0\pm 2},\cdots,A_{0,\pm n+1}$ and impose $x_{n,0}=x_{n-1,0}=\cdots=x_{0,0}=0$. We revise the notations in (\ref{4.23})-(\ref{4.31}) as follows. Then analogous results for $\mathfrak{o}(2n+2)$ can be derived by the same methods as in the odd case. Hence we only state these parallel results and omit explicit calculation. Let
		 \begin{eqnarray}
			\msr B_{(n)}=\mathbb{C}[x_{i,j}~|~0<|j|\leqslant i\leqslant n]
		\end{eqnarray}
	  be the polynomial algebra in $n(n+1)$ variables. Denote
		\begin{eqnarray}
			x_{i,j}=\left\lbrace
			\begin{array}{ll}
				\text{variables}\; x_{i,j}, &\text{if}\;0< |j|\leqslant i\leqslant n,\\
				1, &\text{if}\;j=i+1,\\
				0, &\text{if}\;j>i+1\;\text{or}\;j=0.
			\end{array}
			\right.  \label{3.6}
		\end{eqnarray}
		For each $i\in \overline{1,n}$, we define the polynomials $x_{i,-j}$ for $i\in\overline{r+1,n+1}$ in $\msr B_{(n)}$ by recurrence as follows,
		\begin{eqnarray}
			&x_{i,-(i+1)}=-\sum\limits_{r=1}^ix_{i,r}x_{i,-r}, \label{3.7} \\
			&x_{i,-j}=-\sum\limits_{|r|=1}^{j-1}x_{i,r}x_{j-1,r}~~\text{for~}j\in\overline{i+2,n+1}. \label{3.8}
		\end{eqnarray}
		Moreover, we take the notation
		\begin{eqnarray}
			\alpha_i=(\alpha_{i,-i},\cdots,\alpha_{i,-1},\alpha_{i,1},\cdots,\alpha_{i,i})\in\mathbb{N}^{2i},~i\in\overline{1,n},
		\end{eqnarray}
		and
		\begin{eqnarray}
			\alpha=(\alpha_1,\alpha_2,\cdots,\alpha_n)\in\mathbb{N}^{n(n+1)}.
		\end{eqnarray}
		Denote by $X_i^{\alpha_1}$ and $X^{\alpha}$ the monomials
		\begin{eqnarray}
			X_i^{\alpha_1}=\prod_{|j|=1}^ix_{i,j}^{\alpha_{i,j}}~\text{and}~X^{\alpha}=\prod_{i=1}^nX_{i}^{\alpha_i}.
		\end{eqnarray}
		
		Finally, for $1\leqslant t\leqslant s\leqslant n$ and $\Theta=(\theta_1,\cdots,\theta_{s-t+1})\in\mathbb{Z}^{s-t+1}$ with $0\neq\theta_k\geqslant -(n+1)$ for every $k\in\overline{1,s-t+1}$, we denote
		\begin{eqnarray}
			F_{s,t}(\Theta)=\begin{vmatrix}
				x_{t,\theta_1}&x_{t,\theta_2}&\cdots&x_{t,\theta_{s-t+1}}\\
				x_{t+1,\theta_1}\!&\!x_{t+1,\theta_2}\!&\cdots&\!x_{t+1,\theta_{s-t+1}}\\
				\cdots&\cdots&\cdots&\cdots\\
				x_{s,\theta_1}&x_{s,\theta_2}&\cdots&x_{s,\theta_{s-t+1}}
			\end{vmatrix},
		\end{eqnarray}
		In particular, we take
		\begin{eqnarray}
			\begin{aligned}
				f_{s,t}(r)=&F_{s,t}(t+1,t+2,\cdots, s, r)\\
				=&\begin{vmatrix}
					1&0&\cdots&0&x_{t,r}\\
					x_{t,t+1}&1&\cdots&0&x_{t+1,r}\\
					\vdots&\vdots&\ddots&\vdots\\
					x_{s,t+1}&x_{n,t+2}&\cdots&x_{s,s}&x_{s,r}
				\end{vmatrix}.
			\end{aligned}\label{3.13}
		\end{eqnarray}
		
		Thus we have the first-order differential operators realization for any highest weight representation of $\mathfrak{o}(2n+2)$ with highest-weight $\lambda=\sum_{i=1}^{n+1}\mu_i\varepsilon_i$.
		\begin{theorem}\label{Th1}
			The representation formulas of $\mathfrak{o}(2n+2)$ on $\msr B_{(n)}$ are as follows.\psp
			
			For $1\leqslant |i|<j\leqslant n+1$,
			\begin{eqnarray}
				&A_{j,i}|_{\msr B_{(n)}}=\sum\limits_{r=j}^n(x_{r,j}\partial_{x_{r,i}}-x_{r,-i}\partial_{x_{r,-j}})+\partial_{x_{j-1,i}},\label{Th1a}
			\end{eqnarray}
			
			For $j\in\overline{2,n+1}$,
			\begin{eqnarray}
				&A_{j,j}|_{\msr B_{(n)}}=\sum\limits_{r=j}^n(x_{r,j}\partial_{x_{r,j}}-x_{r,-j}\partial_{x_{r,-j}})-\sum\limits_{|s|=1}^{j-1}x_{j-1,s}\partial_{x_{j-1,s}}+\mu_j, \\
				&A_{1,1}|_{\msr B_{(n)}}=\sum\limits_{r=1}^n(x_{r,1}\partial_{x_{r,1}}-x_{r,-1}\partial_{x_{r,-1}})+\mu_1.\label{3.15}
			\end{eqnarray}
			\begin{eqnarray}
				A_{1,j}|_{\msr B_{(n)}}&=&\sum\limits_{r=j}^nx_{r,1}\partial_{x_{r,j}}-\sum_{r=1}^{n}x_{r,-j}\partial_{x_{r,-1}}\nonumber\\
				& &-\sum\limits_{r=1}^{j-1}\big[ x_{r,1}\sum\limits_{|s|=1}^rf_{j-1,r}(s)\partial_{x_{r,s}}-(\mu_{r+1}-\mu_r)f_{j-1,r}(1)\big],
			\end{eqnarray}
			\begin{eqnarray}
				A_{-1,j}|_{\msr B_{(n)}}&=&\sum\limits_{r=j}^nx_{r,-i}\partial_{x_{r,j}}-\sum_{r=1}^nx_{r,-j}\partial_{x_{r,i}}+2\mu_1f_{j-1,1}(-1)\nonumber\\
				& &-\sum\limits_{r=1}^{j-1}\big[ x_{r,-1}\sum\limits_{|s|=1}^rf_{j-1,r}(s)\partial_{x_{r,s}}-(\mu_{r+1}-\mu_r)f_{j-1,r}(-1)\big].
			\end{eqnarray}
			
			For $2\leqslant i<j\leqslant n+1$,
			\begin{eqnarray}
					A_{i,j}|_{\msr B_{(n)}}&=&\sum\limits_{r=j}^nx_{r,i}\partial_{x_{r,j}}-\sum_{r=1}^{n}x_{r,-j}\partial_{x_{r,-i}}-\sum\limits_{|s|=1}^{i-1}f_{j-1,i-1}(s)\partial_{x_{i-1,s}}\nonumber\\
					& &-\sum\limits_{r=i}^{j-1}\big[ x_{r,i}\sum\limits_{|s|=1}^rf_{j-1,r}(s)\partial_{x_{r,s}}-(\mu_{r+1}-\mu_r)f_{j-1,r}(i)\big],
			\end{eqnarray}
			and
			\begin{eqnarray}
					A_{-i,j}|_{\msr B_{(n)}}&=&\sum\limits_{r=j}^nx_{r,-i}\partial_{x_{r,j}}-\sum\limits_{r=i}^{n}x_{r,-j}\partial_{x_{r,i}}\nonumber\\
					& &-\sum\limits_{r=1}^{j-1}\big(x_{r,-i}\sum\limits_{|s|=1}^rf_{j-1,r}(s)\partial_{x_{r,s}}\big)+\sum\limits_{r=1}^{i-1}\big(x_{r,-j}\sum\limits_{|s|=1}^rf_{i-1,r}(s)\partial_{x_{r,s}}\big) \nonumber\\
					& &+\sum_{r=2}^{j-1}(\mu_{r+1}-\mu_r)f_{j-1,r}(-i)-\sum\limits_{r=2}^{i-1}(\mu_{r+1}-\mu_r)f_{i-1,r}(-j)\nonumber\\
					& &+\mu_2\big(f_{j-1,1}(-i)-f_{i-1,1}(-j)\big)\nonumber\\
					& &+\mu_1\big(f_{i-1,1}(-1)f_{j-1,1}(1)-f_{i-1,1}(1)f_{j-1,1}(-1)\big)\label{3.17}\label{Th1b}
			\end{eqnarray}
			Moreover, $V_{2n+2}(\lmd)=U(\mathfrak{o}(2n+2))(1)$ is an irreducible highest-weight $\mathfrak{o}(2n+2)$-module with highest weight $\lambda=\sum_{i=1}^{n+1}\mu_i\varepsilon_i$.
		\end{theorem}
		\pse

		Suppose that $\lambda=\sum_{i=1}^{n+1}\mu_i\varepsilon_i$ is a dominant integral weight of $\mathfrak{o}(2n+2)$, where the number $\mu_i$'s satisfy the conditions
		\begin{eqnarray}
			\mu_2+\mu_1,\;\mu_i-\mu_{i-1}\in\mathbb{N},\;\for\;i\in\overline{2,n+1}.
		\end{eqnarray}
		Denote the part of first-order differential operator in the representation formula of $A_{i,j}|_{\mathscr{B}(n)}$ by $\mathcal{D}_{i,j}$ for $i,j\in\overline{1,n+1}$ (cf. (\ref{Th1a})-(\ref{Th1b})). Set
		\begin{eqnarray}
			\mathbb{J}_{s,t}=\{(\theta_1,\cdots,\theta_{s-t+1})\in(\mathbb{Z}\setminus\{0\})^{s-t+1}\mid -(n+1)\leqslant\theta_1<\cdots<\theta_{s-t+1}\leqslant s\}
		\end{eqnarray}
		for $1\leqslant t\leqslant s\leqslant n$ and
		\begin{eqnarray}
			&\mathcal{T}_{1}=\big\{\big(\mathcal{D}_{1,n+1}+f_{n,1}(1)\big)^{\iota_n}\cdots\!\big(\mathcal{D}_{1,2}+f_{1,1}(1)\big)^{\iota_1}(1)\mid\iota_1,\cdots,\iota_n\in\!\{0,1\}\big\},\\
			&\mathcal{T}_{-1}=\big\{\big(\mathcal{D}_{-1,n+1}+f_{n,1}(-1)\big)^{\iota_n}\cdots\!\big(\mathcal{D}_{-1,2}+f_{1,1}(-1)\big)^{\iota_1}(1)\mid\iota_1,\cdots,\iota_n\in\!\{0,1\}\big\},\\
			&\mathcal{T}_t=\{F_{s,t}(\Theta)\mid s\in\overline{t,n},\;\Theta\in\mathbb{J}_{s,t}\}\cup\{1\}\;\;\for\;\;t\in\overline{2,n}.
		\end{eqnarray}
		Similarly to Lemma \ref{L6}, we can prove $\mathcal{T}_{1}$ forms a basis of $V_{2n+2}(-\frac{1}{2}\varepsilon_1+\frac{1}{2}\sum_{k=2}^{n+1}\varepsilon_k))$ and $\mathcal{T}_{-1}$ forms a basis of $V_{2n+2}(\frac{1}{2}\sum_{k=1}^{n+1}\varepsilon_k)$. Denote
		\begin{eqnarray}
			\mathcal{S}(\lambda)=\big\{f_{-1}\prod\limits_{r=1}^nf_r\mid f_{-1}\in\mathcal{T}_{-1}^{\mu_2+\mu_1}\;\text{and}\;f_r\in\mathcal{T}_r^{\mu_{r+1}-\mu_r}\;\for\;t\in\overline{1,n}\big\}.
		\end{eqnarray}
	Moreover, let $S(\lambda)_r$ be the homogeneous subset of $S(\lambda)$ consisting of polynomial with degree $r$ in $\{x_{n,\pm 1},\cdots,x_{n,\pm n}\}$. It is clear that $S(\lambda)_r=\emptyset$ when $r>2\mu_{n+1}$. We have following result.
		\begin{theorem}\label{Th2}
			 The finite-dimensional irreducible $\mathfrak{o}(2n+2)$-module
				\begin{eqnarray}
					V_{2n+2}(\lambda)=\text{Span}\:S(\lambda),
				\end{eqnarray}
			    and its homogeneous subspace
			    \begin{eqnarray}
			    	(V_{2n+2}(\lambda))_r=\text{Span}\:S(\lambda)_r.
			    \end{eqnarray}
		\end{theorem}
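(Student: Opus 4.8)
The plan is to run the proof of Theorem 2.2 in the even setting, so I would first record what transfers. Since the formulas of Theorem 3.1 are obtained from those of Theorem 2.1 by deleting $A_{0,\pm1},\dots,A_{0,\pm(n+1)}$ and setting $x_{n,0}=x_{n-1,0}=\cdots=x_{1,0}=0$, all the auxiliary results of Section 2 carry over with the index $0$ (and the running value $t=0$) removed: the determinant identities of Lemmas 2.1 and 2.2, the action of $\Psi_i-\Phi_i$ on an $F_{s,t}(\Theta)$ as in (4.81)--(4.82), and the two computations of Lemmas 2.4 and 2.5 for the determinant-type operators of (4.83) and for $\sum_i A_{i,n+1}F_{p,q}(J,-i)$, with only the obvious changes to the numerical constants coming from the index $0$ being absent. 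Likewise $\overline{M}=U(\msr G_-)(1\otimes M_{n-1})$ and $\overline{M}_r=\msr G_-^{\,r}(1\otimes M_{n-1})$ hold, so $V_{2n+2}(\lambda)=M_n$ and $(V_{2n+2}(\lambda))_0=V_{2n}(\sum_{i=1}^n\mu_i\varepsilon_i)$. The base case $n=1$ is $o(4)$ on $\msr B_{(1)}=\mathbb{C}[x_{1,1},x_{1,-1}]$, where $V_4(\mu_1\varepsilon_1+\mu_2\varepsilon_2)=\sum_{i=0}^{\mu_2-\mu_1}\sum_{j=0}^{\mu_2+\mu_1}\mathbb{C}x_{1,1}^ix_{1,-1}^j$; using $o(4)\cong\mathfrak{sl}(2)\oplus\mathfrak{sl}(2)$ and $x_{1,-2}=-x_{1,1}x_{1,-1}$ one checks directly that $\mathrm{Span}\,S_4(\lambda)$ and $\mathrm{Span}\,S_4(\lambda)_r$ reproduce this module and its graded pieces. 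After this it suffices to prove the two inclusions by a double induction on $n$ and on the degree $r$ in $\{x_{n,\pm1},\dots,x_{n,\pm n}\}$; the statement about $V_{2n+2}(\lambda)_r$ then follows because $S_{2n+2}(\lambda)=\bigcup_{r=0}^{2\mu_{n+1}}S_{2n+2}(\lambda)_r$.

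For $V_{2n+2}(\lambda)\subseteq\mathrm{Span}\,S_{2n+2}(\lambda)$ I would induct on $r$. The base $(V_{2n+2}(\lambda))_0=\mathrm{Span}\,S_{2n}(\sum_{i=1}^n\mu_i\varepsilon_i)$ is contained in $\mathrm{Span}\,S_{2n+2}(\lambda)$ by the induction on $n$ and comparison of the defining conditions. For a generator $f=\prod_{\Theta}F_{s,t}(\Theta)^{\alpha_{s,t}(\Theta)}\in S_{2n+2}(\lambda)$, the even analogue of formula (4.113) expresses $A_{i,n+1}(f)$ as $f$ times an explicit linear combination of the $f_{n,t}(i)$, the $f_{-i-1,t}(-(n+1))$ and the terms $F_{s,t}(\Theta)^{-1}(F_{n,t}(\Theta_{n,s}(i))+\Delta_i(\Theta))$ and $F_{s,t}(\Theta)^{-1}F_{-i-1,t}(\Theta_{-i-1,s}(i))$, whose coefficients are $\mu_{t+1}-\mu_t-\sum_{s,\Theta}\alpha_{s,t}(\Theta)$ and their negatives. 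The inequalities $\sum_{s,\Theta}\alpha_{s,t}(\Theta)\le\mu_{t+1}-\mu_t$ built into $S_{2n+2}(\lambda)$ force each resulting monomial to lie again in $S_{2n+2}(\lambda)$, so $(V_{2n+2}(\lambda))_{r+1}=\sum_iA_{i,n+1}((V_{2n+2}(\lambda))_r)\subseteq\mathrm{Span}\,S_{2n+2}(\lambda)$.

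For the reverse inclusion I would induct on $r$ as well, splitting $S_{2n+2}(\lambda)_r$ into the even analogues of $Q_r^0$ and $Q_r^\ast$ and proving the two claims of the odd case. To get $Q_r^0\subseteq(V_{2n+2}(\lambda))_r$ I descend the chain $Q_r^n\subset\cdots\subset Q_r^0$: a generator $g=F_{p,q}(J,-(n+1))g'$ with $g'$ of strictly smaller degree — hence in $V_{2n+2}(\lambda)$ by induction — is recovered from $\sum_iA_{i,n+1}(F_{p,q}(J,-i)g')$ up to a term in $\mathrm{Span}\,Q_r^{q+1}$, with leading coefficient $\sum_{t,\Theta}\beta_{n,t}(\Theta)-\mu_{n+1}-\mu_{q+1}-n-p$, a negative integer by the constraints of $S_{2n+2}(\lambda)$. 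To get $Q_r^\ast\subseteq(V_{2n+2}(\lambda))_r$ I first check $Q_r^\ast\subseteq P_r^{n'-1}$ for the unique $n'$ with $\mu_{n+1}-\mu_{n'}<r\le\mu_{n+1}-\mu_{n'-1}$, and then climb $P_r^0\cap Q_r^\ast\subseteq\cdots\subseteq P_r^{n'-1}\cap Q_r^\ast=Q_r^\ast$, peeling off a factor $F_{n,m}(I)$ and applying the even analogues of (4.85)--(4.86) and Lemma 2.4; here the coefficient of $h=F_{n,m}(I)h'$ produced by the determinant-type operator is $(-1)^{n-m}(n-m-r+\mu_{n+1}-\mu_m)$, and $n-m-r+\mu_{n+1}-\mu_m>0$ since $m<n'$ and $r\le\mu_{n+1}-\mu_{n'}$.

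The step I expect to be the main obstacle is the spinorial end, namely the interplay between $o(4)$, the pair of variables $x_{1,\pm1}$, and the condition $\partial_{x_{1,-1}}^{\mu_1+\mu_2+1}(f)=0$ appearing in the definition of $S_{2n+2}(\lambda)$. In the odd case the innermost anchor $V_3(\mu_1\varepsilon_1)$ is a single chain $\sum_{r=0}^{2\mu_1}\mathbb{C}x_{0,0}^r$ and the base of the induction is trivial; in the even case one must first reduce to $\mu_1\geq0$ via the symmetry $\varepsilon_1\leftrightarrow-\varepsilon_1$, and the relation $\mu_1+\mu_2\in\mathbb{N}$ is of a different nature from the chain relations $\mu_i-\mu_{i-1}\in\mathbb{N}$. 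Correspondingly, in the proof of Claim 2 the index $n'$ may equal $1$, a case in which $\mu_0$ is undefined; there the positivity of the coefficient that drives the induction must be supplied not by $r\le\mu_{n+1}-\mu_{n'}$ but by the degree bound $r\le2\mu_{n+1}$ together with the condition $\partial_{x_{1,-1}}^{\mu_1+\mu_2+1}(f)=0$, and checking that the $P/Q$ bookkeeping closes up at this boundary is where the argument needs the most care. Away from this boundary the proof is a faithful transcription of Section 2 with the index value $t=0$ and the variables $x_{s,0}$ deleted.
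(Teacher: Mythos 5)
Your proposal is correct and follows essentially the same route as the paper, which itself proves Theorem 3.2 only by asserting that "analogous results for $o(2n+2)$ can be derived by the same methods as in the odd case" and omitting the calculation; your transcription of the two inclusions, the $Q_r^0/Q_r^\ast$ and $P_r^m$ bookkeeping, and the determinant-operator lemmas is exactly what that remark intends. Your identification of the genuine point of divergence — the $o(4)$ anchor, the condition $\partial_{x_{1,-1}}^{\mu_1+\mu_2+1}(f)=0$ replacing the single-chain base case, and the boundary $n'=1$ where $k_1=\mu_1+\mu_2$ is not a chain relation — is a fair and accurate account of where the "parallel" argument actually requires new checking that the paper leaves implicit.
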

		
	 \begin{remark}\label{Re3}
	 	Set \begin{eqnarray}
	 		\mathcal{T}_1^{\ast}=\{F_{s,1}(\Theta)\mid s\in\overline{1,n},\;\Theta\in\mathbb{J}_{s,1}\} \cup \{1\},
	 	\end{eqnarray}
	Then $\text{Span}\:\mathcal{T}_1^{\ast}=\text{Span}\:\mathcal{T}_1\mathcal{T}_{-1}$. Thus if when $\mu_1=0$, we have
	\begin{eqnarray}
		V_{2n+2}(\lambda)=\text{Span}\:\big\{\prod\limits_{r=1}^nf_r\mid f_1\in(\mathcal{T}_1^{\ast})^{\mu_2}\;\text{and}\; f_r\in\mathcal{T}_r^{\mu_{r+1}-\mu_r}\;\for\;r\in\overline{2,n}\big\}.
	\end{eqnarray}
	 \end{remark}
		
		At the end of this section, we consider two examples. Suppose that $\mu_1=\mu_2=\cdots=\mu_{n-1}=0$. For simplicity, let
		\begin{eqnarray}
			&x_i=x_{n,i},\;\;y_j=x_{n-1,j},\;\;\for\; i\in\overline{-n,n}\setminus\{0\},\;\;j\in\overline{-n+1,n-1}\setminus\{0\},\nonumber\\
			&\text{and}\;\;k_1=\mu_{n+1}-\mu_n,\;k_2=\mu_n\in\mathbb{N}.
		\end{eqnarray}
		Denote
		\begin{eqnarray}
			&D_x=\sum\limits_{|s|=1}^nx_{s}\partial_{x_s},\;D_y=\sum\limits_{|s|=1}^{n-1}y_s\partial_{y_s},\;\eta_x=-x_{-n-1}=\sum\limits_{s=1}^{n}x_sx_{-s},\\
			& \eta_y=-y_{-n}=\sum\limits_{s=1}^{n-1}y_sy_{-s},\;\eta_{xy}=-y_{-n-1}=\sum\limits_{|t|=1}^{n-1}x_ty_{-t}+x_{-n}-x_n\eta_y.
		\end{eqnarray}
		
		 Theorem \ref{Th1} presents the representation formulas of $\mathfrak{o}(2n+2)$:
		for $1\leqslant |i|\leqslant j\leqslant n-1$,
		\begin{equation}
			A_{j,i}|_{V_{2n+2}(\lambda)}=x_{j}\partial_{x_{i}}-x_{-i}\partial_{x_{-j}}+y_{j}\partial_{y_{i}}-y_{-i}\partial_{y_{-j}},
		\end{equation}
		\begin{equation}
			A_{n,i}|_{V_{2n+2}(\lambda)}=x_{n}\partial_{x_{i}}-x_{-i}\partial_{x_{-n}}+\partial_{y_i},~A_{n+1,\pm n}|_{V_{2n+3}(\lambda)}=\partial_{x_{\pm n}},
		\end{equation}
		\begin{equation}
			A_{n+1,i}|_{V_{2n+2}(\lambda)}=\partial_{x_i},~A_{n+1,n+1}|_{V_{2n+3}(\lambda)}=-D_x+k_1+k_2,
		\end{equation}
		\begin{equation}
		    A_{n,n}|_{V_{2n+2}(\lambda)}=x_{n}\partial_{x_{n}}-x_{-n}\partial_{x_{-n}}-D_y+k_2,
		\end{equation}
		\begin{equation}A_{i,n}|_{V_{2n+2}(\lambda)}=x_i\partial_{x_{n}}-x_{-n}\partial_{x_{-i}}-y_i(D_y-k_n)+\eta_y\partial_{y_{-i}},
		\end{equation}
		\begin{equation}A_{n,n+1}|_{V_{2n+2}(\lambda)}=-x_n(D_x-k_1)+\eta_x\partial_{x_{-n}}-\sum\limits_{|t|=1}^{n-1}(x_t-x_ny_t)\partial_{y_t},
		\end{equation}
		\begin{eqnarray}
			A_{i,n+1}|_{V_{2n+2}(\lambda)}&=&-x_i(D_x-k_1)+\eta_x\partial_{x_{-i}}\nonumber\\
			&&-y_i\sum\limits_{|t|=1}^{n-1}(x_t-x_ny_t)\partial_{y_t}+k_2(x_i-x_ny_i)+\eta_{xy}\partial_{y_{-i}},
		\end{eqnarray}
		\begin{eqnarray}
			A_{-n,n+1}|_{V_{2n+2}(\lmd)}&=&-x_{-n}(D_x-k_1)+\eta_x\partial_{x_{n}}\nonumber\\
			&&+\eta_y\sum\limits_{|t|=1}^{n-1}(x_t-x_ny_t)\partial_{y_t}+k_2(x_{-n}+x_n\eta_y)-\eta_{xy}(D_y-k_2).
		\end{eqnarray}
		In this case, we have
		\begin{eqnarray}
			\mathcal{S}(\lambda)&=&\big\{\prod\limits_{n-1\leqslant t\leqslant s\leqslant n}\prod\limits_{\Theta\in \mathbb{J}_{s,t}} F_{s,t}(\Theta)^{\alpha_{s,t}(\Theta)} \mid \alpha_{s,t}(\theta)\in\mathbb{N};\nonumber\\
			& &\sum\limits_{\Theta\in\mathbb{J}_{n,n} }\alpha_{n,n}(\Theta)\leqslant k_1\text{ and }\sum\limits_{s=n\!-\!1}^n\!\sum\limits_{\Theta\in\mathbb{J}_{s,n\!-\!1} }\!\!\alpha_{s,n-1}(\Theta)\leqslant k_2\big\},\label{6.20}
		\end{eqnarray}
		since $\mu_1=\mu_2=\cdots=\mu_{n-1}=0$. (\ref{6.20}) shows that $\mathcal{S}(\lambda)$ only involves $4n$ variables $\big\{x_{i},y_j\mid i\in\overline{-n,-1}\cup\overline{1,n},j\in\overline{-n+1,-1}\cup\overline{1,n-1}\big\}$, which by Theorem \ref{Th2} means
		\begin{eqnarray}
			V_{2n+2}(\lambda)\subset\mathbb{C}[x_i,y_j\mid i\in\overline{-n,-1}\cup\overline{1,n},j\in\overline{-n+1,-1}\cup\overline{1,n-1}].
		\end{eqnarray}
		Then the set
		\begin{eqnarray}
			& &\big\{\prod\limits_{-n-1\leqslant i\leqslant n;\atop i\neq0}^nx_i^{\alpha_i}\prod_{-n-1\leqslant t<s\leqslant n;\atop s,t\neq0}(x_sy_t-x_ty_s)^{\gamma_{s,t}}\prod_{|j|=1}^{n-1}y_j^{\beta_j} \mid \alpha_i,\beta_j,\gamma_{s,t}\in\mathbb{N};\nonumber\\
			& &\sum\limits_{-n-1\leqslant i\leqslant n;\atop i\neq0}\alpha_i\leqslant k_1;~\sum_{-n-1\leqslant t<s\leqslant n;\atop s,t\neq 0}\gamma_{s,t}+\sum\limits_{|j|=1}^{n-1}\beta_j\leqslant k_2\big\}
		\end{eqnarray}
		spans $V_{2n+2}(\lambda)$ by (\ref{6.20}). \psp

Next we consider the case $\lambda=k(\varepsilon_{i+1}+\varepsilon_{i+2}+\cdots+\varepsilon_{n+1})$ for some  $i\in\overline{2,n-1}$. Again we use the notation
       \begin{eqnarray}
       	F_j(\Theta)=\begin{vmatrix}
       			x_{i,\theta_1}&x_{i,\theta_2}&\cdots&x_{i,\theta_{j-i+1}}\\
       			x_{i+1,\theta_1}\!&\!x_{i+1,\theta_2}\!&\cdots&\!x_{i+1,\theta_{j-i+1}}\\
       			\vdots&\vdots&\ddots&\vdots\\
       			x_{j,\theta_1}&x_{j,\theta_2}&\cdots&x_{j,\theta_{j-i+1}}\\		
       		\end{vmatrix},\;\;\for\;\; j\in\overline{i,n},
       	\end{eqnarray}
        with $\Theta=(\theta_1,\theta_2,\cdots,\theta_{j-i+1})\in\mathbb{J}_{j,i}$.
       By Theorem \ref{Th2}, $V_{2n+2}(\lambda)$ is spanned by
       \begin{eqnarray}
       	\mathcal{T}_i^k=\big\{\prod\limits_{j=i}^n\prod\limits_{\Theta\in\mathbb{J}_{j,i}}F_j(\Theta)^{\alpha_j(\Theta)}~|~\alpha_j(\Theta)\in\mathbb{N};~\sum\limits_{j=i}^n\sum\limits_{\Theta\in\mathbb{J}_{j,i}}\alpha_{j}(\Theta)\leqslant k\big\}.
       \end{eqnarray}
       Thus $V_{2n+2}(\lambda)$ dose not involve the variables $\{x_{s,t}~|~0< |t|\leqslant s\leqslant i-1\}$. The representation of $\mathfrak{o}(2n+2)$ is given in (\ref{Th1a})-(\ref{Th1b}) with $\mu_1=\cdots=\mu_i=0$, $\mu_{i+1}=\cdots=\mu_{n+1}=k$ and all the ingredients containing $\{x_{s,t}\mid 1\leqslant|t|\leqslant s\leqslant i-1\}$ are dropped.		
		
		\section{ Singular Vectors and Combinatorial Identities}
		
In this section, we find all the $\msr G_n$-singular vectors in $\wht M_r$ and $\ol{ M}_r$ in (1.10), (1.17) and (1.18). They lead to an explicit exhibition of Zhelobenko branching rules. Then we use them and inclusion-exclusion principle to derive  analogues of the Macdonald identities and the identities in (1.24) and (1.25).

\subsection{Odd Case}
		Let $M=M_{n-1}$ in (\ref{2.32}) and let $V_{2n+3}(\lambda)=\ol{M}=M_n$ in (\ref{2.35}) be the finite-dimensional irreducible $\mathfrak{o}(2n+3)$-module constructed in Section 2 with the highest weight $\lambda=\mu_1\varepsilon_1+\mu_2\varepsilon_2+\cdots+\mu_{n+1}\varepsilon_{n+1}$, where the coefficients $\mu_i$'s satisfy the conditions
		\begin{equation}
			2\mu_1,~\mu_2-\mu_1,~\mu_3-\mu_2,~\cdots,\mu_{n+1}-\mu_n\in\mathbb{N}.
		\end{equation}
		We denote
		\begin{equation}
			k_1=2\mu_1,~k_i=\mu_i-\mu_{i-1},~i\in\overline{2,n+1}. \label{5.2}
		\end{equation}
		In this section, we will find a basis for the subspace of $\mathfrak{o}(2n+1)$-singular vectors in $V_{2n+3}(\lambda)$ by the method in \cite{X4, X3}.

		\begin{lemma}\label{Lem1}
			A rational function in $\mathcal{X}=\{x_{i,j}|0\leqslant |j|\leqslant i\leqslant n\}$ annihilated by positive root vectors $\{A_{s,t}\mid 0\leqslant |t|<s\leqslant n\}$ must be a rational functions in $\{f_{n,i}(i)\mid i\in\overline{0,n}\}$ $\cup\{x_{j,-(n+1)}\mid j\in\overline{0,n-1}\}$.
		\end{lemma}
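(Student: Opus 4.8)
The plan is to characterize the invariant rational functions by induction on $n$, using the explicit shape of the positive root vectors $A_{s,t}|_{\msr A_{(n)}}$ from Theorem 2.1 together with the recursive structure of the construction (the variables $x_{n,j}$ for $j\in\overline{-n,n}$ are the ``new'' layer on top of $\msr A_{(n-1)}$). First I would observe that the generating set on the right is indeed annihilated: by Lemma 2.2 each $f_{n,i}(i)$ and each $x_{j,-(n+1)}$ (which by \eqref{4.25}--\eqref{4.26} is, up to sign, a function of the $x_{i,j}$) is killed by all $A_{s,t}$ with $0\leqslant|t|<s\leqslant n$. Indeed the operators $A_{s,t}$, $A_{s,0}$, $A_{s,s}$, $A_{s,-t}$ in \eqref{4.42}--\eqref{4.46} act on the columns/rows of the determinants $F_{s,t}(\Theta)$ essentially as the standard $gl$-type derivations, so that $f_{n,t}(t)=F_{n,t}(t+1,\dots,n,t)$ — a determinant whose column labels $\{t+1,\dots,n,t\}$ form an $(o(2n+1))$-lowest-weight-type configuration — is annihilated, and similarly $x_{j,-(n+1)}$; this is the same mechanism as in the $sl(n+1)$ case cited as \cite{Z}. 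So the content of the lemma is the converse: any invariant rational function lies in the subfield generated by these $2n+1$ elements.

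For the converse I would argue by downward induction on the ``level'': write the rational function $g$ as an element of the field of fractions of $\msr A_{(n)}=\msr A_{(n-1)}[x_{n,-n},\dots,x_{n,n}]$. The key point is that $A_{n,j}|_{\msr A_{(n)}}=\partial_{x_{n-1,j}}+\sum_{r=n}^{n}(x_{n,n}\partial_{x_{n,j}}-x_{n,-j}\partial_{x_{n,-n}})$ and more generally the root vectors $A_{n+1,j}$-type and $A_{s,t}$ with $s=n$ act within the top layer in a transparent way (they include $\partial_{x_{n-1,\cdot}}$ pieces coupling the two layers). A cleaner route: the subgroup of $o(2n+1)$ generated by the positive root vectors $\{A_{s,t}\mid 0\le|t|<s\le n\}$ has, acting on $\msr A_{(n)}$, an open orbit structure on which the $(n+1)+n=2n+1$ functions $f_{n,0}(0),\dots,f_{n,n}(n)$ and $x_{0,-(n+1)},\dots,x_{n-1,-(n+1)}$ form a coordinate system transverse to the orbits. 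Concretely I would show that the Jacobian of the map $\msr X\to (f_{n,i}(i),\,x_{j,-(n+1)})$ together with suitably many orbit coordinates is generically invertible — using that $f_{n,i}(i)$ has ``leading term'' $x_{n,n}x_{n-1,n-1}\cdots x_{i+1,i+1}x_{i,i}$ by expanding the triangular determinant \eqref{4.31}, and that $x_{j,-(n+1)}$ is linear in the layer-$j$ variables with nonzero coefficient by \eqref{4.25}. Then a rational function killed by all positive root vectors is constant along orbits, hence depends only on the transverse coordinates, i.e.\ is a rational function of the $f_{n,i}(i)$ and $x_{j,-(n+1)}$.

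To make the orbit argument rigorous without differential geometry I would instead induct: let $g$ be invariant. Freeze the top layer and regard $g\in \mbox{Frac}(\msr A_{(n-1)})[x_{n,-n},\dots,x_{n,n}]^{\mbox{loc}}$; applying $A_{n+1,j}|_{\msr A_{(n)}}=\partial_{x_{n,j}}$ shows $g$ is constant in each $x_{n,j}$ with $0\le|j|\le n$ \emph{after} we quotient by the relations coming from the remaining positive root vectors — more precisely, the operators $A_{n,j}$ and $A_{n,-j}$ force $g$, as a function of $x_{n,-n},\dots,x_{n,n}$, to depend only on the single ``norm'' combination $\eta=\tfrac12 x_{n,0}^2+\sum_{t=1}^n x_{n,t}x_{n,-t}=-x_{n,-(n+1)}$ and on the invariants pulled back from $\msr A_{(n-1)}$; then $A_{n,n}$ fixes the homogeneity and $A_{n+1,n}=\partial_{x_{n,n}}$ rules out pure $\eta$-dependence except through $f_{n,i}(i)$ and $x_{n-1,-(n+1)}$. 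Iterating this ``peel off the top layer'' step $n$ times and invoking the base case $n=0$ (where $\msr A_{(0)}=\mbb C[x_{0,0}]$, no positive root vectors, and the claimed generating set is $\{f_{0,0}(0)\}=\{x_{0,0}\}$, trivially all of it) completes the induction. The main obstacle I anticipate is the bookkeeping in the middle step: showing that invariance under the full positive nilpotent subalgebra $o(2n+1)_+$ (not just the ``new'' generators $A_{n,j}$, $A_{n,-j}$) collapses the dependence on layer $n-1,\dots,1$ precisely down to the claimed $2n$ further generators, for which I would lean on the determinantal identities in Lemma 2.1 and Lemma 2.2 exactly as the $sl(n+1)$ computation in \cite{Z}, checking that no extra invariants appear because the $f_{n,i}(i)$ and $x_{j,-(n+1)}$ already exhaust the algebraically independent invariants (a dimension count: $(n+1)^2$ variables, $\dim o(2n+1)_+ = n^2$, leaving $2n+1$ invariants).
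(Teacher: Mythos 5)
There is a genuine gap in your ``peel off the top layer'' step, and it is fatal as written. The operators $A_{n+1,j}|_{\msr A_{(n)}}=\partial_{x_{n,j}}$ that you invoke are \emph{not} among the annihilators in the hypothesis: the lemma only assumes annihilation by $\{A_{s,t}\mid 0\leqslant |t|<s\leqslant n\}$, i.e.\ the positive root vectors of the subalgebra $o(2n+1)$, and $A_{n+1,j}$ lies outside this set. If $\partial_{x_{n,j}}$ did kill $g$, then $g$ would be independent of the entire top layer, contradicting the fact that every claimed generator $f_{n,i}(i)$ and $x_{j,-(n+1)}$ depends on the variables $x_{n,\cdot}$. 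Relatedly, your intermediate claim that invariance under $A_{n,\pm j}$ forces the top-layer dependence to collapse onto the single combination $\eta=-x_{n,-(n+1)}$ together with pullbacks from $\msr A_{(n-1)}$ is false: $f_{n,n}(n)=x_{n,n}$ is itself annihilated by every $A_{s,t}$ with $0\leqslant|t|<s\leqslant n$ (no such operator contains a $\partial_{x_{n,n}}$ term), so invariants can depend on $x_{n,n}$ in an essential way that is neither through $\eta$ nor through the lower layers. Finally, the Jacobian/open-orbit sketch and the dimension count $(n+1)^2-n^2=2n+1$ are consistent heuristics, but you never convert them into an argument, and you acknowledge as much.

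For comparison, the paper's proof sidesteps all of this with a change of variables: it sets $y_{i,j}=x_{i,j}$ for $0\leqslant|j|<i\leqslant n$, $y_{i,i}=f_{n,i}(i)$ and $y_{i,-i}=x_{i-1,-(n+1)}$, verifies via Lemma 2.1 and Lemma 2.2 that $\mathcal{X}$ and $\mathcal{Y}$ are functionally equivalent and that each $y_{k,\pm k}$ is annihilated, and then proves by backward induction on $|j|$ that an annihilated rational function is independent of every $y_{i,j}$ with $|j|<i$: applying the specific root vectors $A_{k,\pm(k-1)}$ in the $y$-coordinates produces a linear system in the partials $\partial f/\partial y_{\cdot,\pm(k-1)}$ whose coefficient matrix is triangular with generically nonzero diagonal entries (such as $y_{k,k}y_{k+1,k+1}^{-1}$ and $y_{l,k}$), forcing those partials to vanish. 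If you want to salvage your inductive scheme, this is the mechanism you need: put the claimed generators into the coordinate system first, and then eliminate the remaining coordinates one diagonal at a time using only the operators actually available in the hypothesis.
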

		\begin{proof}
			Set
			\begin{eqnarray}
				y_{i,j}=\left\lbrace
				\begin{array}{ll}
					x_{i,j},&\text{if}\;0\leqslant |j|<i\leqslant n,\\
					f_{n,i}(i),&\text{if}\;j=i,\\
					x_{i-1,-(n+1)},&\text{if}\;-n\leqslant j=-i<0.
				\end{array}
				\right.
			\end{eqnarray}
			Indeed, $x_{n,n}=y_{n,n}$, and for $k<n$,
			\begin{eqnarray}
					y_{k,k}=\begin{vmatrix}
						1&0&\cdots&0&x_{k,k}\\
						x_{k+1,k+1}&1&\cdots&0&x_{k+1,k}\\
						\vdots&\vdots&\ddots&\vdots&\vdots\\
						x_{n-1,k+1}&x_{n-1,k+1}&\cdots&1&x_{n-1,k}\\
						x_{n,k+1}&x_{n,k+1}&\cdots&x_{n,n}&x_{n,k}
					\end{vmatrix}=-x_{k,k}y_{k+1,k+1}+f_{n,k+1}(k).\label{5.28}
			\end{eqnarray}
			Thus
			\begin{equation}
				x_{k,k}=-y_{k,k}y_{k+1,k+1}^{-1}+\sum_{l=k+1}^ny_{l,k}g_l,
			\end{equation}
			where $g_l$'s are rational functions in $\{y_{i,j}~|~k+1\leqslant j\leqslant i\leqslant n\}$.
			
			On the other hand, by Lemma \ref{L2},
			\begin{equation}
				y_{1,-1}=x_{0,-(n+1)}=-x_{0,0}f_{n,1}(0)-f_{n,1}(-1)+\frac{1}{2}x_{0,0}^2f_{n,1}(1).
			\end{equation}
			Hence
			\begin{eqnarray}
					x_{1,-1}&=&y_{2,2}^{-1}(f_{n,2}(-1)-f_{n,1}(-1))\nonumber\\
					&=&y_{2,2}^{-1}(f_{n,2}(-1)+x_{0,0}f_{n,1}(0)+y_{1,-1}-\frac{1}{2}x_{0,0}^2y_{1,1})\nonumber\\
					&=&y_{2,2}^{-1}(y_{1,-1}+h_1),
			\end{eqnarray}
			where $h_1$ is a rational function in $\{y_{i,j}~|~0\leqslant j\leqslant i\leqslant n\}\cup \{y_{2,-1},y_{3,-1},\cdots, y_{n,-1}\}$. For $1<k\leqslant n$,
			\begin{equation}
				y_{k,-k}=-\sum_{l=-k+1}^{k-1}x_{k-1,l}f_{n,k}(-l)-f_{n,k}(-k)+\frac{1}{2}\sum_{r=-k+1}^{k-1}x_{k-1,r}x_{k-1,-r}f_{n,k}(k),
			\end{equation}
			from which we have
			\begin{eqnarray}
					x_{k,-k}&=&y_{k+1,k+1}^{-1}(f_{n,k+1}(-k)-f_{n,k}(-k))\nonumber\\
					&=&y_{k+1,k+1}^{-1}(f_{n,k+1}(-k)+y_{k,-k}+\!\!\sum_{l=-k+1}^{k-1}\!\!x_{k-1,l}f_{n,k}(-l)-\frac{1}{2}y_{k,k}\!\!\sum_{r=-k+1}^{k-1}\!\!x_{k-1,r}x_{k-1,-r})\nonumber\\
					&=&y_{k+1,k+1}^{-1}(y_{k,-k}+h_k),
			\end{eqnarray}
			where $h_k$ is a rational function in $\{y_{l,-k}~|~k+1\leqslant l\leqslant n\}\bigcup(\bigcup\limits_{j=-k+1}^n\{y_{i,j}~|~|j|\leqslant i\leqslant n\})$.
			
			Thus the sets of variables
			\begin{eqnarray}
				\mathcal{X}=\{x_{i,j}~|~0\leqslant |j|\leqslant i\leqslant n\}~~~\text{and}~~~\mathcal{Y}=\{y_{i,j}~|~0\leqslant |j|\leqslant i\leqslant n\}
			\end{eqnarray}
			are functionally equivalent. That is, any rational function in $\mathcal{X}$ can be written as a rational function in $\mathcal{Y}$, and vice versa.
			
			Firstly, it follows by (\ref{4.42}) and (\ref{5.28}) that
			\begin{eqnarray}
				A_{s,t}(y_{k,k})=A_{s,t}(y_{k,-k})=0,\qquad\for~1\leqslant |t|\leqslant s\leqslant n ~\text{and}~0\leqslant k\leqslant n.
			\end{eqnarray}
			
			Secondly, we prove that for any $f$, which is annihilated by positive root vectors $\{A_{s,t}~|~0\leqslant |t|<s\leqslant n\}$, is independent of $\{y_{i,j}~|~0\leqslant |j|<i\leqslant n\}$ by backward induction on $|j|$. By Theorem \ref{T3},
			
			\begin{eqnarray}
					A_{n,\pm(n-1)}(f)\!\!&=&\!\!\sum_{1\leqslant|j|\leqslant i\leqslant n}\!\!\frac{\partial f}{\partial y_{i,j}}A_{n,\pm (n-1)}(y_{i,j})=\!\!\sum_{1\leqslant|j|< i\leqslant n}\frac{\partial f}{\partial y_{i,j}}A_{n,\pm (n-1)}(y_{i,j})\nonumber\\
					\!\!&=&\!\!\sum_{1\leqslant |j|<i\leqslant n}\!\!\frac{\partial f}{\partial y_{i,j}}(x_{n,n}\partial_{x_{n,\pm(n-1)}}-x_{n,\mp(n-1)}\partial_{x_{n,-n}}+\partial_{x_{n-1,\pm(n-1)}})(y_{i,j})\nonumber\\
					\!\!&=&\!\!y_{n,n}\frac{\partial f}{\partial y_{n,\pm (n-1)}}=0.
			\end{eqnarray}
			So $f$ is independent of $y_{n,n-1}$ and $y_{n,-n+1}$. Assume that $f$ is independent of $\{y_{i,j}~|~k\leqslant |j|<i\leqslant n\}$ for some $2\leqslant k\leqslant n-1$. Then
			\begin{eqnarray}
					A_{k,-\pm(k-1)}(f)\!\!&=&\!\!\sum_{1\leqslant |j|<i\leqslant n}\!\!\frac{\partial f}{\partial y_{i,j}}A_{k,\pm(k-1)}(y_{i,j})\nonumber\\
					\!\!&=&\!\!\sum_{1\leqslant |j|<i\leqslant n}\!\!\frac{\partial f}{\partial y_{i,j}}\left[\partial_{x_{k-1,\pm(k-1)}}+\!\!\sum_{l=k+1}^n\!(x_{l,k}\partial_{x_{l,\pm (k-1)}}-x_{l,\mp (k-1)}\partial_{x_{l,-k}}) \right] (y_{i,j})\nonumber\\
					\!\!&=&\!\!x_{k,k}\frac{f}{\partial y_{k,\pm (k-1)}}+\!\!\sum_{l=k+1}^n\!\left[ y_{l,k}\frac{\partial f}{\partial y_{l,\pm (k-1)}}-y_{l,\mp (k-1)}\frac{\partial f}{\partial y_{l,-k}}\right] \nonumber\\
					\!\!&=&\!\!-y_{k,k}y_{k+1,k+1}^{-1}\frac{\partial f}{\partial y_{k,\pm(k-1)}}+\!\!\sum_{l=k+1}^ny_{l,k}\big(g_l\frac{\partial f}{\partial y_{k,k-1}}+\frac{\partial f}{\partial y_{l,k-1}}\big)=0,
			\end{eqnarray}
			where $g_l$'s are rational functions in $\{y_{i,j}~|~k+1\leqslant j\leqslant i\leqslant n\}$. Since $f$ is independent of $\{y_{l,k}~|~k+1\leqslant l\leqslant n\}$, the above equations yields
			\begin{equation}
				y_{k,k}y_{k+1,k+1}^{-1}\frac{\partial f}{\partial y_{k,\pm(k-1)}}=0,
			\end{equation}
			which means $f$ is independent of $y_{k,k-1}$. Thus
			\begin{equation}
				\sum_{l=k+1}^ny_{l,k}\frac{\partial f}{\partial y_{l,k-1}}=0
			\end{equation}
			yields
			\begin{equation}
				y_{l,k}\frac{\partial f}{\partial y_{l,k-1}}=0,~~\text{for}~k+1\leqslant l\leqslant n,
			\end{equation}
			which implies $f$ is independent of $\{y_{i,j}|k-1\leqslant j<i\leqslant n\}$. In other words, $f$ is a rational function in $\{y_{k,\pm k}~|~0\leqslant k\leqslant n\}$.
		\end{proof}\pse

		\begin{lemma}\label{Lem2}
			Every polynomial in $\mathbb{C}[\mathcal{X}]$ annihilated by $\{A_{s,t}~|~0\leqslant |t|<s\leqslant n\}$ must be a polynomial in $\{f_{n,i}(i)~|~0\leqslant i\leqslant n\}\cup\{x_{j,-(n+1)}~|~0\leqslant j\leqslant n-1\}$.
		\end{lemma}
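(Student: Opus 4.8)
The plan is to upgrade Lemma 4.1 from rational functions to polynomials. By Lemma 4.1, a polynomial $f\in\mathbb{C}[\mathcal{X}]$ that is annihilated by all $A_{s,t}$ with $0\leqslant|t|<s\leqslant n$ is a \emph{rational} function in the algebraically independent set $\mathcal{Z}=\{f_{n,i}(i)\mid i\in\overline{0,n}\}\cup\{x_{j,-(n+1)}\mid j\in\overline{0,n-1}\}$; the task is to show that the dependence is in fact polynomial. First I would record that the $2n+1$ elements of $\mathcal{Z}$ are algebraically independent over $\mathbb{C}$ (this is implicit in the proof of Lemma 4.1, where $\mathcal{X}$ and $\mathcal{Y}$ are shown to be functionally equivalent, hence $|\mathcal{Z}|$ equals the transcendence degree contributed, and each $f_{n,i}(i)$, $x_{j,-(n+1)}$ is a distinct polynomial in $\mathcal{X}$ with an easily identified leading term). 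So we may write $f=P/Q$ with $P,Q\in\mathbb{C}[\mathcal{Z}]$ coprime, and the genuine content is that $Q$ must be a nonzero constant.

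The key step is a careful inspection of leading terms. Introduce a monomial order on $\mathbb{C}[\mathcal{X}]$ — for instance, the lexicographic order induced by ordering the variables $x_{i,j}$ so that $x_{i,i}$ (for each $i$) and $x_{j,-(n+1)}$-``building blocks'' dominate — chosen so that $f_{n,i}(i)$ has leading monomial $x_{n,n}x_{n-1,n-1}\cdots x_{i+1,i+1}x_{i,i}$ (read off the lower-triangular-with-unit-diagonal structure of the determinant in (\ref{4.31})) and $x_{j,-(n+1)}$ has a leading monomial built from the $x_{j,r}x_{t,r}$ products in (\ref{4.25})--(\ref{4.26}). Because $f\in\mathbb{C}[\mathcal{X}]$, clearing denominators gives $Q(\mathcal{Z})\cdot f = P(\mathcal{Z})$ inside $\mathbb{C}[\mathcal{X}]$; comparing leading monomials of the two sides forces the leading monomial of $Q$ — as a polynomial in $\mathcal{X}$ obtained by substituting the explicit leading monomials of the generators — to divide the leading monomial of $P$ divided by that of $f$. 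Pushing this through, one sees that no variable that appears in the denominator of a putative nonconstant $Q$ can be cancelled, because the generators in $\mathcal{Z}$ are ``independent'' at the level of leading monomials: distinct elements of $\mathcal{Z}$ involve disjoint (or minimally overlapping) sets of the $x_{i,i}$'s and of the quadratic pieces $x_{j,-(n+1)}$, so the only way $Q(\mathcal{Z})$ can divide $P(\mathcal{Z})$ in $\mathbb{C}[\mathcal{X}]$ is $Q\in\mathbb{C}^\times$.

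Alternatively — and this may be cleaner to write — I would argue via a localization/Zariski-density argument: the polynomial map $\Phi:\mathbb{C}[\mathcal{Z}]\hookrightarrow\mathbb{C}[\mathcal{X}]$ realizes $\operatorname{Spec}\mathbb{C}[\mathcal{X}]$ dominantly over affine $(2n+1)$-space, and $f=P/Q$ being globally polynomial on $\mathbb{C}[\mathcal{X}]$ means $P/Q$ is regular on the image of $\Phi$; since the image is dense and $\mathbb{C}[\mathcal{Z}]$ is a polynomial ring (hence a UFD), regularity of $P/Q$ on a dense open forces $Q\mid P$, whence $Q$ is a unit. Either way, we conclude $f\in\mathbb{C}[\mathcal{Z}]=\mathbb{C}[f_{n,i}(i)\mid i\in\overline{0,n}]\cup\text{-generated by-}\{x_{j,-(n+1)}\mid j\in\overline{0,n-1}\}$, which is the claim. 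The main obstacle I anticipate is verifying that the generators $f_{n,i}(i)$ and $x_{j,-(n+1)}$ are genuinely algebraically independent and that their leading terms (in a suitable order) are ``transverse'' enough to run the divisibility/UFD argument; once that combinatorial bookkeeping about the determinants in (\ref{4.31}) and the recursions (\ref{4.25})--(\ref{4.26}) is in place, the rest is formal. The converse inclusion — that every polynomial in these generators is annihilated by the positive root vectors — is immediate from the displayed identities $A_{s,t}(f_{n,k}(k))=A_{s,t}(x_{k,-(n+1)})=0$ established in the proof of Lemma 4.1.
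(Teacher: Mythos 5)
Your reduction of the lemma to the claim ``if $P,Q\in\mathbb{C}[y_1,\dots,y_{2n+1}]$ are coprime and $P(\mathcal{Z})/Q(\mathcal{Z})$ lies in $\mathbb{C}[\mathcal{X}]$, then $Q$ is a unit'' identifies the right issue, but neither of your two arguments establishes that claim, and the claim is in fact \emph{false} for general algebraically independent generators, so some specific feature of $\mathcal{Z}$ must enter. Concretely, take $\mathcal{Z}=\{uv,\,u\}\subset\mathbb{C}[u,v]$: these two elements are algebraically independent, $z_1=uv$ and $z_2=u$ are coprime in $\mathbb{C}[z_1,z_2]$, and $z_1/z_2=v$ is a polynomial in $\mathbb{C}[u,v]$, yet $v\notin\mathbb{C}[uv,u]$ and $z_2\nmid z_1$. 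This example defeats your Zariski-density argument outright: the pullback of $P/Q$ under a dominant morphism being regular on the whole source does not make $P/Q$ regular on (a neighborhood of) the image, so ``regularity on a dense open forces $Q\mid P$'' is not a valid inference --- every rational function is regular on a dense open set. It also shows that your leading-monomial sketch cannot succeed on the strength of algebraic independence and ``transverse leading terms'' alone; moreover the assertion you would need (``$Q(\mathcal{Z})\mid P(\mathcal{Z})$ in $\mathbb{C}[\mathcal{X}]$ only if $Q\in\mathbb{C}^{\times}$'') is not even correctly formulated, since $Q\mid P$ in $\mathbb{C}[\mathcal{Z}]$ always produces such a divisibility. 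The ``combinatorial bookkeeping'' you defer is therefore not a routine verification: it is the entire content of the lemma, and your proposal leaves it open.

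The paper closes this gap with a single specialization rather than a monomial order: substitute $x_{i,j}=0$ for all $0\leqslant|j|\leqslant i\leqslant n-1$. Under this evaluation the defining determinant of $f_{n,i}(i)$ becomes triangular and specializes to $x_{n,i}$, while the expression of $x_{j,-(n+1)}$ obtained from Lemma 2.2 specializes to $\pm x_{n,-(j+1)}$; that is, the $2n+1$ generators in $\mathcal{Z}$ are sent (up to sign) bijectively onto the $2n+1$ independent variables $x_{n,-n},\dots,x_{n,n}$. Applying this evaluation to the identity $Q(\mathcal{Z})\cdot H=P(\mathcal{Z})$ shows that $P/Q$ evaluated at genuinely independent variables is a polynomial, whence $Q\mid P$ in $\mathbb{C}[y_1,\dots,y_{2n+1}]$ and $H=(P/Q)(\mathcal{Z})$ is a polynomial in $\mathcal{Z}$. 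Note that this is exactly the ``degeneration of the generators to independent variables'' that your leading-term idea is groping toward, but realized as a point evaluation (which is what the $uv,u$ example fails: no specialization sends $uv$ and $u$ to independent variables). As written, your proposal has a genuine gap at precisely this step.
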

		\begin{proof}
			Suppose that a rational function in $\{f_{n,0}(0),\cdots,f_{n,n}(n)\}\cup\{x_{0,-(n+1)},\cdots,x_{n-1,-(n+1)}\}$ is
			\begin{equation}
				\frac{P(f_{n,0}(0),\cdots,f_{n,n}(n),x_{0,-(n+1)},\cdots,x_{n-1,-(n+1)})}{Q(f_{n,0}(0),\cdots,f_{n,n}(n),x_{0,-(n+1)},\cdots,x_{n-1,-(n+1)})}=H(\mathcal{X})\in\mathbb{C}_n[\mathcal{X}],  \label{5.1}
			\end{equation}
			is a polynomials in $\mathbb{C}_n[\mathcal{X}]$, where $P$ and $Q$ are polynomials in $2n+1$ variables. If we replace every $x_{i,j},~(0\leqslant |j|\leqslant i\leqslant n-1)$ by zero in $(\ref{5.1})$, then the left hand side of $(\ref{5.1})$ becomes
			\begin{equation}
				\frac{P(x_{n,0},\cdots,x_{n,n},x_{n,-1},\cdots,x_{n,-n})}{Q(x_{n,0},\cdots,x_{n,n},x_{n,-1},\cdots,x_{n,-n})},
			\end{equation}
			which is a polynomial in $\{x_{n,j}~|~-n\leqslant j\leqslant n\}$. Therefore, $(\ref{5.1})$ is a polynomial in $\{f_{n,i}(i)~|~0\leqslant i\leqslant n\}\cup\{x_{j,-(n+1)}~|~0\leqslant j\leqslant n-1\}$.
		\end{proof}\pse

		Note that $x_{n,-(n+1)}=-\frac{1}{2}x_{0,0}^2-\sum\limits_{i=1}^nx_{n,i}x_{n,-i}$ is also annihilated by positive root vectors. By Lemma \ref{Lem2}, we suppose that
		\begin{eqnarray}
			\!\!x_{n,-\!(n+1)}=\!\!\!\sum_{0\leqslant i\leqslant n\atop 0\leqslant j\leqslant n\!-\!1}\!\!\! b_{ij}f_{n,i}(i)x_{j,-(n+1)}+\!\!\!\!\!\sum_{0\leqslant i\leqslant j\leqslant n}\!\!\! c_{ij}f_{n,i}(i)f_{n,j}(j)+\!\!\!\!\!\!\sum_{0\leqslant i\leqslant j\leqslant n\!-\!1}\!\!\!\! d_{ij}x_{i,-\!(n+1)}x_{j,-\!(n+1)}.\label{5.42}
		\end{eqnarray}
		Comparing the coefficients of every monomials in above equality, we have
		\begin{eqnarray}
			x_{n,-(n+1)}=\sum_{i=1}^nf_{n,i}(i)x_{i-1,-(n+1)}-\frac{1}{2}f_{n,0}(0)^2.
		\end{eqnarray}
		Thanks to to Lemma \ref{L1}, we have
		\begin{equation}
			f_{n,i}(-(n+1))=x_{n,-(n+1)}-\sum_{k=i}^{n-1}x_{k,-(n+1)}f_{n,k+1}(k+1),
		\end{equation}
		which means $f_{n,i}(-(n+1))$ is annihilated by $\{A_{s,t}~|~0\leqslant |t|<s\leqslant n\}$. According to Theorem \ref{T3}, the polynomials
		\begin{eqnarray}
			\prod_{i=0}^nf_{n,i}(i)^{\gamma_i}\prod_{j=1}^{n}f_{n,j}(-(n+1))^{\gamma'_j}\prod_{k=0}^{n-1}x_{k,-(n+1)}^{\gamma_{-(k+1)}}
		\end{eqnarray}
		with conditions
		\begin{eqnarray}
				& &\gamma_n+\gamma'_n\in\overline{0,k_{n+1}};\label{5.14.1}\\
				& &\gamma_i+\gamma_{-(i+1)}+\gamma'_i\in\overline{0,k_{i+1}},~\text{for }i\in\overline{1,n-1};\label{5.14.2}\\
				& &\gamma_0+2\gamma_{-1}\in\overline{0,k_1}. \label{5.14.3}
		\end{eqnarray}
		form a set $\mathcal{H}_o(\lambda)$ of $\mathfrak{o}(2n+1)$-singular vectors in $V_{2n+3}(\lambda)$.
		
		Now we define a lexicographic degree as follows,
		\begin{eqnarray}
			(n,\cdot)<\cdots<(1,\cdot)<(0,\cdot), \label{5.15}
		\end{eqnarray}
		and
		\begin{equation}
			(j,j)<(j,-j)<\cdots<(j,1)<(j,-1)<(j,0),~~j=0,1,\cdots,n.\label{5.16}
		\end{equation}
		
		Given two monomials  $X^{\alpha},X^{\alpha'}\in\msr A_{(n)}$,  we write
		\begin{eqnarray}
			\text{lexdeg}(X^{\alpha})=(\alpha_{0,0},\alpha_{1,1},\alpha_{1,-1},\alpha_{1,0},\cdots,\alpha_{n,0}).
		\end{eqnarray}
		and say that $\text{lexdeg} X^{\alpha}>\text{lexdeg} X^{\alpha'}$
		if (i).~$|\alpha|>|\alpha'|$ or (ii).~$|\alpha|=|\alpha'|$ and there exists $(p,q)$ in (\ref{5.15}) and (\ref{5.16}) such that $\alpha_{i,j}=\alpha'_{i,j}$ for all indexes $(i,j)<(p,q)$ and $\alpha_{p,q}>\alpha'_{p,q}$. This gives a total order. Define a map $\mathscr{L}:\msr A_{(n)}\longrightarrow\msr A_{(n)}$ such that for every polynomial $f\in\msr A_{(n)}$, $\mathscr{L}(f)$ is the monomial with minimal lexicographic degree in the above sense appearing in $f$. Thus
		\begin{eqnarray}
			\mathscr{L}\big(\prod_{i=0}^nf_{n,i}(i)^{\gamma_i}\prod_{j=1}^{n}f_{n,j}(-(n+1))^{\gamma'_j}\prod_{k=0}^{n-1}x_{k,-(n+1)}^{\gamma_{-(k+1)}}\big)=x_{n,0}^{\gamma_0}\prod_{s=1}^nx_{n,s}^{\gamma_s+\gamma'_s}\prod_{t=1}^nx_{n,-t}^{\gamma_{-t}+\gamma'_t}. \label{5.51}
		\end{eqnarray}
		
		Define a relation in $\msr A_{(n)}$ as follows:
		\begin{equation}
			\forall f_1,~f_2\in \msr A_{(n)},~~f_1\sim f_2~~~~\text{if  }\mathscr{L}(f_1)=\mathscr{L}(f_2).
		\end{equation}
		This is an equivalence relation. Let
		\begin{equation}
			\mathcal{B}_o(\lambda)\text{ be a set of representatives of the elements in $\mathcal{H}_o(\lambda)/\sim$}
		\end{equation}
		and let
		\begin{equation}
			\mathcal{M}_o(\lambda)=\mathscr{L}(\mathcal{B}_o(\lambda)=\big\{\prod\limits_{i=-n}^nx_{n,i}^{\beta_i}~|~\exists f\in \mathcal{H}_o(\lambda)~\text{such that}~\mathscr{L}(f)=\prod\limits_{i=-n}^nx_{n,i}^{\beta_i} \big\}.
		\end{equation}
		Note that $\mathcal{B}_o(\lambda)$ is a linearly independent subset of $\mathcal{H}_o(\lambda)$ and $|\mathcal{M}_o(\lambda)|=|\mathcal{B}_o(\lambda)|$.
		
		For $f\in \mathcal{H}_o(\lambda)$, we assume that $\mathscr{L}(f)=\prod\limits_{i=-n}^nx_{n,i}^{\beta_i}$. It is easy to calculate that the weight of $f$ is
		\begin{equation}
			\lambda(f)=\sum_{i=1}^{n}(\mu_{i}+\beta_i-\beta_{-i})\varepsilon_i\label{5.52}
		\end{equation}
	    Denote
		\begin{eqnarray}
			&\kappa_i=\mu_i+\beta_i,~\nu_i=\kappa_i-\beta_{-i},~i=1,2,\cdots,n,\\
			&\kappa_0=\min\{\mu_1,\nu_1\}-\beta_0.
		\end{eqnarray}
		 Thus we obtain two series $\vec{\kappa}=(\kappa_0,\kappa_1,\cdots,\kappa_n)$ and $\vec{\nu}=(\nu_1,\nu_2,\cdots,\nu_n)$. According to condition (\ref{5.14.1})-(\ref{5.14.3}), we have
		\begin{equation}
			\begin{aligned}
				&-\mu_1\leqslant \kappa_0\leqslant\mu_1\leqslant \kappa_1\leqslant \mu_2\leqslant \cdots\leqslant \mu_n\leqslant \kappa_n\leqslant \mu_{n+1},\\
				&-\nu_1\leqslant \kappa_0\leqslant\nu_1\leqslant \kappa_1\leqslant \nu_2\leqslant \cdots\leqslant \nu_n\leqslant \kappa_n.
			\end{aligned} \label{5.54}
		\end{equation}
		and
		\begin{equation}
			\begin{aligned}
				 &\{\kappa_i,~\nu_i\mid~i\geqslant1\} \text{ being nonnegative integers }\\  &\text{or nonnegative half integers as}\; {\mu_i}'s.
			\end{aligned}\label{5.55}
		\end{equation}
	   This coincides with the Zhelobenko branching rules. Conversely, we are given two series $\vec{\kappa},~\vec{\nu}$ satisfying (\ref{5.54}) and (\ref{5.55}). For $1\leqslant i\leqslant n$, let
		\begin{equation}
			(\gamma_i,\gamma_{-i},\gamma'_i)=\left\lbrace \begin{array}{ll}
				(0,\mu_i-\nu_i,\kappa_i-\mu_i),&\text{if~}\mu_i>\nu_i,\\
				(\nu_i-\mu_i,0,\kappa_i-\nu_i),&\text{if~}\mu_i\leqslant\nu_i.
			\end{array}
			\right.\label{5.56}
		\end{equation}
		and let $\gamma_0=\min\{\mu_1,\nu_1\}-\kappa_0$. Then
		\begin{eqnarray}
			\gamma_n+\gamma'_n=\kappa_n-\mu_n\leqslant k_{n+1},	\label{5.57}
		\end{eqnarray}
		\begin{eqnarray}
				& &\gamma_i+\gamma_{-(i+1)}+\gamma'_i=\kappa_i-\mu_i+\max\{0,\mu_{i+1}-\nu_{i+1}\}\nonumber\\
				&=&\max\{\kappa_i-\mu_i,\mu_{i+1}-\mu_i-(\nu_{i+1}-\kappa_i)\}\leqslant k_{i+1},~~1\leqslant i\leqslant n-1,\label{5.58}
		\end{eqnarray}
		\begin{eqnarray}
			\gamma_0+2\gamma_{-1}=\min\{\mu_1,\nu_1\}-\kappa_0+2\max\{0,\mu_1-\nu_1\}\leqslant 2\mu_1=k_1.\label{5.59}
		\end{eqnarray}
		That is, $\{\gamma_i,\gamma'_j~|~-n\leqslant i\leqslant n,~1\leqslant j\leqslant n\}$ satisfying (\ref{5.14.1})-(\ref{5.14.3}). Therefore, we construct an one-to-one correspondence between $\mathcal{B}_o(\lambda)$ and the set of pairs of series $\{\vec{\kappa},~\vec{\nu}\}$ which satisfying (\ref{5.54}) and (\ref{5.55}). Therefore, we have
		\begin{theorem}\label{The3}
			$\mathcal{B}_o(\lambda)$ is a basis of the subspace of $\mathfrak{o}(2n+1)$-singular vectors in $V_{2n+3}(\lambda)$.
		\end{theorem}
		
		Now let $\mathcal{H}_o(\lambda)_r$ (respectively, $\mathcal{B}_o(\lambda)_r,~\mathcal{M}_o(\lambda)_r$) be the homogeneous subset of $\mathcal{H}_o(\lambda)$ (respectively, $\mathcal{B}_o(\lambda),~\mathcal{M}_o(\lambda)$) with degree $r$ in $\{x_{n,i}~|~i\in\overline{-n,n}\}$. By the definition, $\mathscr{L}|_{\mathcal{B}_o(\lambda)_r}:\mathcal{B}_o(\lambda)_r\longrightarrow\mathcal{M}_o(\lambda)_r$ is bijective and $\mathcal{B}_o(\lambda)_r\neq \emptyset$ only if $0\leqslant r\leqslant 2\mu_{n+1}$.
		
		For $\beta=(\beta_{-n},\cdots,\beta_n)\in\mathbb{N}^{2n+1}$, we set
		\begin{equation}
			\begin{aligned}
				\lambda(\beta)=&\sum_{i=1}^{n}(\mu_{i}+\beta_i-\beta_{-i})\varepsilon_i.\label{5.61}
			\end{aligned}
		\end{equation}
		Let $|\beta|\!=\!\!\sum\limits_{i=-n}^n\!\!\beta_i$ be the length of $\beta$ and let $X_n^{\beta}\!=\!\!\prod\limits_{i=-n}^n\!\!x_{n,i}^{\beta_i}$ be the monomial in $\mathbb{C}[x_{n,-n},\!\cdots\!,\!x_{n,n}]$ with degree $\beta$. Let $\widehat{\Omega}_r$ be the set of $(2n+1)$-tuples $\beta\in\mathbb{N}^{2n+1}$ with following conditions
		\begin{eqnarray}
			&0\leqslant \beta_i\leqslant k_{i+1},~i\in\overline{0,n-1},~0\leqslant \beta_{-1}\leqslant \beta_1-\frac{1}{2}\beta_0+\frac{1}{2}k_1, \label{5.62}\\
			&0\leqslant \beta_{-i}\leqslant \beta_i-\beta_{i-1}+k_i,~i\in\overline{2,n},~|\beta|=r\label{5.63}
		\end{eqnarray}
		and
		\begin{eqnarray}
			\Omega_r=\{\beta\in\widehat{\Omega}_r~|~0\leqslant \beta_n\leqslant k_{n+1}\}.
		\end{eqnarray}
		In the following, we count a singular vector up to its nonzero constant multiple. Denote by $\mathscr{A}_r$ be the subspace of homogeneous polynomials with degree $r$ in $\mathbb{C}[x_{n,j}~|~-n\leqslant j\leqslant n]$. By (\ref{5.51})-(\ref{5.59}), we have
		\begin{theorem}\label{The4}
			For $r\in\mathbb{N}$, the $\mathfrak{o}(2n+1)$-singular vectors in $\widehat{M}_r$ are
			\begin{eqnarray}
				\{f=\prod_{i=0}^nf_{n,i}(i)^{\gamma_i}\prod_{j=1}^{n}f_{n,j}(-(n+1))^{\gamma'_j}\prod_{k=0}^{n-1}x_{k,-(n+1)}^{\gamma_{-(k+1)}}~|~\mathscr{L}(f)=X_n^{\beta}~\text{with }\beta\in\widehat{\Omega}_r\}, \label{5.65}
			\end{eqnarray}
			which generate all the irreducible $\mathfrak{o}(2n+1)$-submodules in
			\begin{eqnarray}
				\widehat{M}_r=\mathscr{A}_r\otimes V_{2n+1}(\sum_{i=1}^n\mu_i\varepsilon_i). \label{5.66}
			\end{eqnarray}
			It implies the identity
			\begin{eqnarray}
				\sum_{\beta\in\widehat{\Omega}_r}d_{2n+1}(\lambda(\beta))=\binom{2n+r}{2n}d_{2n+1}(\sum_{i=1}^n\mu_i\varepsilon_i). \label{5.67}
			\end{eqnarray}
			
			For $r \in\overline{0,2\mu_{n+1}}$, the $\mathfrak{o}(2n+1)$-singular vectors in $V_{2n+3}(\lambda)_r=\overline{M}_r$ are
			\begin{eqnarray}
				\mathcal{B}_o(\lambda)_r=\{f\in\mathcal{B}_o(\lambda)~|~\mathscr{L}(f)=X_n^{\beta}~\text{with }\beta\in\Omega_r\}, \label{5.68}
			\end{eqnarray}
		\end{theorem}
		\pse

		\begin{lemma}\label{Lem5}
			$\dim(V_{2n+3}(\lambda))_{r_1}=\dim(V_{2n+3}(\lambda))_{r_2}$ for nonnegative integers $r_1,~r_2$ with $r_1+r_2=2\mu_{n+1}=k_1+2k_2+2k_3+\cdots+2k_{n+1}$.
		\end{lemma}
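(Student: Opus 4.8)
The plan is to recognize the $r$-grading of $V_{2n+3}(\lambda)=\ol M$ as the decomposition into eigenspaces of a Cartan element, after which the statement is a one-line consequence of the Weyl-group invariance of the character.

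First I would identify the grading operator. By \ref{1.11} with the specialization $c=-\mu_{n+1}$ of \ref{4.38}, one has $A_{n+1,n+1}|_{\wht M}=-(D_{2n+1}+c)\otimes\mathrm{Id}_M$, and $D_{2n+1}=\sum_{s=-n}^n x_{n,s}\partial_{x_{n,s}}$ is the Euler operator counting the total degree in the top variables $\{x_{n,j}\mid -n\le j\le n\}$. Hence $A_{n+1,n+1}$ acts on $\wht M_r=\msr A_r\otimes V_{2n+1}(\sum_{i=1}^n\mu_i\varepsilon_i)$, and in particular on $\ol M_r$, by the scalar $\mu_{n+1}-r$. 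Since $A_{n+1,n+1}$ lies in the Cartan subalgebra $H$ and each $\ol M_r$ is $H$-stable, this grading refines the weight decomposition: for a weight $\nu=\sum_{i=1}^{n+1}\nu_i\varepsilon_i$ one has $\nu(A_{n+1,n+1})=\nu_{n+1}$, so $(V_{2n+3}(\lambda))_r$ is precisely the sum of the weight spaces $V_{2n+3}(\lambda)_\nu$ with $\nu_{n+1}=\mu_{n+1}-r$, while $V_{2n+3}(\lambda)=\bigoplus_{r=0}^{2\mu_{n+1}}\ol M_r$ by \ref{4.108}.

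Next I would invoke that, $V_{2n+3}(\lambda)$ being finite-dimensional, its character is invariant under the Weyl group $W$ of $o(2n+3)=B_{n+1}$, so $\dim V_{2n+3}(\lambda)_\nu=\dim V_{2n+3}(\lambda)_{w\nu}$ for every $w\in W$ and every weight $\nu$. Apply this with $w=s_{\varepsilon_{n+1}}$, the reflection in the short root $\varepsilon_{n+1}$ (a root of $B_{n+1}$, hence $s_{\varepsilon_{n+1}}\in W$ — this is the step that uses the odd case, since a single sign change does not lie in the Weyl group of $D_{n+1}$); it sends $\varepsilon_{n+1}\mapsto-\varepsilon_{n+1}$ and fixes $\varepsilon_1,\dots,\varepsilon_n$. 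Thus $\nu\mapsto s_{\varepsilon_{n+1}}\nu$ is a dimension-preserving bijection between the weights of $V_{2n+3}(\lambda)$ with $\nu_{n+1}=\mu_{n+1}-r_1$ and those with $\nu_{n+1}=-(\mu_{n+1}-r_1)=\mu_{n+1}-r_2$, the last equality being exactly the hypothesis $r_1+r_2=2\mu_{n+1}$. Summing the resulting equalities of weight-space dimensions over all such $\nu$ yields $\dim(V_{2n+3}(\lambda))_{r_1}=\dim(V_{2n+3}(\lambda))_{r_2}$; the displayed form $2\mu_{n+1}=k_1+2k_2+\cdots+2k_{n+1}$ is just \ref{5.2}.

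There is essentially no obstacle here: the content lies entirely in the first step — checking that the $x_{n,*}$-degree grading coincides with the $A_{n+1,n+1}$-eigenspace grading and that each $\ol M_r$ is $H$-stable — after which the conclusion is the standard fact that weight multiplicities are constant on Weyl orbits. Should a proof staying inside the combinatorial framework of this section be preferred, the same symmetry can be extracted from the parametrization of the $o(2n+1)$-singular vectors of $\ol M_r$ by pairs $(\vec\kappa,\vec\nu)$ satisfying (\ref{5.54})--(\ref{5.55}): with $\vec\nu$ fixed, the admissible $\kappa_0$ and $\kappa_1,\dots,\kappa_n$ range independently over intervals, $r$ depends on them only through $-\kappa_0+2\sum_{i=1}^n\kappa_i$ (up to a constant determined by $\vec\nu$ and $\lambda$), and the involution $\kappa_0\mapsto-\kappa_0$, $\kappa_i\mapsto\max\{\mu_i,\nu_i\}+\min\{\mu_{i+1},\nu_{i+1}\}-\kappa_i$ (with the convention $\nu_{n+1}:=\mu_{n+1}$) preserves those intervals, fixes $\vec\nu$ and hence the associated $o(2n+1)$-highest weight $\lambda(\beta)$ of \ref{5.61}, and sends $r\mapsto 2\mu_{n+1}-r$; summing $d_{2n+1}(\lambda(\beta))$ over the singular vectors then gives the claim. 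The Weyl-group argument is, however, shorter, and is the route I would take.
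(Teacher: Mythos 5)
Your primary argument is correct, and it is genuinely different from the paper's. The paper proves the lemma combinatorially: given a singular vector $f=\prod_{i}f_{n,i}(i)^{\gamma_i}\prod_j f_{n,j}(-(n+1))^{\gamma'_j}\prod_k x_{k,-(n+1)}^{\gamma_{-(k+1)}}$ of degree $r_1$, it forms the complementary exponents $\gamma''_n=k_{n+1}-\gamma_n-\gamma'_n$, $\gamma''_i=k_{i+1}-\gamma_i-\gamma_{-(i+1)}-\gamma'_i$, $\gamma''_0=k_1-\gamma_0-2\gamma_{-1}$, checks that the resulting $\bar f$ is an admissible singular vector of degree $r_2$ with the same $o(2n+1)$-weight $\lambda(f)=\lambda(\bar f)$, and concludes by summing $\dim U(o(2n+1))(f)$ over the resulting bijection $\mathcal{B}_o(\lambda)_{r_1}\to\mathcal{B}_o(\lambda)_{r_2}$ (this is essentially the second, ``combinatorial'' route you sketch at the end). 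Your main route instead observes that the degree grading is the eigenspace decomposition of the Cartan element $A_{n+1,n+1}$ (eigenvalue $\mu_{n+1}-r$ on $\ol M_r$), so $(V_{2n+3}(\lambda))_r$ is the sum of the weight spaces with $\nu_{n+1}=\mu_{n+1}-r$, and then applies $W$-invariance of the character under the sign change $s_{\varepsilon_{n+1}}\in W(B_{n+1})$. This is shorter and conceptually cleaner; what the paper's explicit involution buys in exchange is that it stays entirely inside the combinatorial framework of Section 4 (no appeal to character theory), it exhibits which singular vectors correspond under the $r\leftrightarrow 2\mu_{n+1}-r$ duality rather than only an equality of dimensions, and the same construction transfers verbatim to the even case (Lemma 4.4), where your remark about $D_{n+1}$ correctly signals that a single sign change is no longer a Weyl group element and the reflection argument would need modification.
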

		\begin{proof}
			Take any $f=\prod_{i=0}^nf_{n,i}(i)^{\gamma_i}\prod_{j=1}^{n}f_{n,j}(-(n+1))^{\gamma'_j}\prod_{k=0}^{n-1}x_{k,-(n+1)}^{\gamma_{-(k+1)}}\in\mathcal{B}_o(\lambda)_{r_1}$, then
			\begin{eqnarray}
					& &\gamma_n+\gamma'_n\in\overline{0,k_{n+1}},\\
					& &\gamma_i+\gamma_{-(i+1)}+\gamma'_i\in\overline{0,k_{i+1}},~~\text{for }i\in\overline{1,n-1}\\
					& &\gamma_0+2\gamma_{-1}\in\overline{0,k_1},\\
					& &\sum_{i=-n}^n\gamma_i+2\sum_{j=1}^n\gamma_j'=r_1.
			\end{eqnarray}
		   Let
		   \begin{eqnarray}
		   		& &\gamma_n''=k_{n+1}-\gamma_n-\gamma_n',\\
		   		& &\gamma_i''=k_{i+1}-\gamma_i-\gamma_{-(i+1)}-\gamma_i',~~i\in\overline{1,n-1},\\
		   		& &\gamma_0''=k_1-\gamma_0-2\gamma_{-1}.
		   \end{eqnarray}
	   Then
	   \begin{eqnarray}
	   		& &\sum_{|i|=1}^n\gamma_i+\gamma_0''+2\sum_{j=1}^n\gamma_j''\nonumber\\
	   		&=&\sum_{|i|=1}^n\gamma_i+k_1-\gamma_0-2\gamma_{-1}+2\sum_{j=1}^{n-1}(k_{j+1}-\gamma_j-\gamma_{-(j+1)}-\gamma_j')+k_{n+1}-\gamma_n-\gamma_{n}'\nonumber\\
	   		&=&k_1+2\sum_{j=1}^nk_{j+1}-\sum_{i=-n}^n\gamma_i-2\sum_{j=1}^n\gamma_j'=r_2.
	   \end{eqnarray}
       Moreover,
       \begin{eqnarray}
       		& &\gamma_n+\gamma_n''=k_{n+1}-\gamma_n'\in\overline{0,k_{n+1}},\\
       		& &\gamma_i+\gamma_{-(i+1)}+\gamma_i''=k_{i+1}-\gamma_i'\in\overline{0,k_i+1},~~\text{for }i\in\overline{1,n-1},\\
       		& &\gamma_0''+2\gamma_{-1}=k_1-\gamma_0\in\overline{0,k_1}.
       \end{eqnarray}
   Therefore,
   \begin{equation}
   	\bar{f}=f_{n,0}(0)^{\gamma''_0}\prod_{i=1}^nf_{n,i}(i)^{\gamma_i}\prod_{j=1}^{n}f_{n,j}(-(n+1))^{\gamma''_j}\prod_{k=0}^{n-1}x_{k,-(n+1)}^{\gamma_{-(k+1)}}
   \end{equation}
   is an element in $\mathcal{H}_o(\lambda)_{r_2}$. Thus the following correspondences are one-to-one:
   \begin{equation}
   	 \begin{aligned}
   	 	\mathcal{B}_o(\lambda)_{r_1}&\longrightarrow&\mathcal{M}_o(\lambda)_{r_1}&\longrightarrow&\mathcal{M}_o(\lambda)_{r_2}&\longrightarrow&\mathcal{B}_o(\lambda)_{r_2}\\
   	 f&\longmapsto& \mathscr{L}(f)&\longmapsto&\mathscr{L}(\bar{f})&\longmapsto&(\mathscr{L}|_{\mathcal{B}_o(\lambda)})^{-1}(\mathscr{L}(\bar{f}))
   	 \end{aligned}
   \end{equation}
Note that the weight of $f$ and $\bar{f}$ are
\begin{equation}
	\lambda(f)=\lambda(\bar{f})=\sum_{i=1}^{n}(\mu_i+\gamma_i-\gamma_{-1})\varepsilon_i.
\end{equation}
As a conclusion,
\begin{equation}
		\sum_{f\in\mathcal{B}_o(\lambda)_{r_1}}\!\!\!\dim U(\mathfrak{o}(2n+1))(f)=\!\!\!\sum_{f\in\mathcal{B}_o(\lambda)_{r_2}}\!\!\!\dim U(\mathfrak{o}(2n+1))(f),
\end{equation}
which leads to the lemma.
		\end{proof}\pse

		Now we are going to use Theorem \ref{The4} and Lemma \ref{Lem5} to calculate $\dim V_{2n+3}(\lambda)_r$. First we assume that $0\leqslant r\leqslant \mu_{n+1}$. Let
		\begin{eqnarray}
			r_s=r-\sum_{i=s}^{n+1}(k_i+1),~~s\in\overline{1,n+1}
		\end{eqnarray}
		For $s>2$, let $\widehat{\Upsilon}_s$ be the set of $(2n+1)$-tuples $\beta\in\mathbb{N}^{2n+1}$ with following conditions
		\begin{eqnarray}
			&|\beta|=r_s,~0\leqslant \beta_{s-2}\leqslant k_{s-1}+k_s+1,\\
			&0\leqslant\beta_i\leqslant k_{i+1},~0\leqslant \beta_j\leqslant k_{j+2},~i\in\overline{1,s-3},~j\in\overline{s-1,n-1},\\
			&0\leqslant \beta_{-(s-1)}\leqslant \beta_{s-1}-\beta_{s-2}+k_{s-1}+k_s+1,0\leqslant \beta_{-1}\leqslant \beta_1-\frac{1}{2}\beta_0+\frac{1}{2}k_1\\
			&0\leqslant \beta_{-p}\leqslant \beta_p-\beta_{p-1}+k_p,~0\leqslant \beta_{-q}\leqslant \beta_q-\beta_{q-1}+k_{q+1},~p\in\overline{1,s-2},~q\in\overline{2,n}.
		\end{eqnarray}
		Set
		\begin{eqnarray}
			\Upsilon_s=\{\beta\in\widehat{\Upsilon}_s~|~0\leqslant \beta_{s-2}\leqslant k_{s-1}\}.
		\end{eqnarray}
		
		For $s=2$, let $\widehat{\Upsilon}_2$ be the set of $(2n+1)$-tuples $\beta\in\mathbb{N}^{2n+1}$ with following conditions
		\begin{eqnarray}
			&|\beta|=r_2,~0\leqslant \beta_j\leqslant k_{j+2},~\overline{1,n-1},~0\leqslant \beta_0\leqslant k_1+2k_2+2,\\
			&0\leqslant \beta_{-q}\leqslant \beta_q-\beta_{q-1}+k_{q+1},~q\in\overline{2,n},~0\leqslant \beta_{-1}\leqslant \beta_1-\frac{1}{2}\beta_0+\frac{1}{2}k_1+k_2+1
		\end{eqnarray}
		and
		\begin{eqnarray}
			\Upsilon_2=\{\beta\in\widehat{\Upsilon}_2~|~0\leqslant \beta_{0}\leqslant k_1\}.
		\end{eqnarray}
		We have $\widehat{\Upsilon}_2=\Upsilon_2$ since $r_2\leqslant k_1$ by our assumption. Furthermore, it is derived from the definition that
		\begin{eqnarray}
			\widehat{\Upsilon}_s\setminus\Upsilon_s=(0,\cdots,0, \stackrel{s-2}{k_{s-1}+1},0,\cdots,0)+\Upsilon_{s-1},~s\in\overline{3,n+1}. \label{5.78}
		\end{eqnarray}
		Replacing $\sum_{i=1}^n\mu_i\varepsilon_i$ by $\sum\limits_{i=1}^n\mu_i\varepsilon_i+\sum\limits_{i=s-1}^n(k_{i+1}+1)\varepsilon_i$ in (\ref{5.66}) and (\ref{5.67}), we have
		\begin{eqnarray}
			&\dim(\mathscr{A}_{r_s}\otimes V_{2n+1}(\sum\limits_{i=1}^n\mu_i\varepsilon_i+\sum\limits_{i=s-1}^n(k_{i+1}+1)\varepsilon_i)) \nonumber\\
			&=\sum\limits_{\beta\in\widehat{\Upsilon}_s}d_{2n+1}(\sum\limits_{i=1}^n\mu_i\varepsilon_i+\sum\limits_{i=s-1}^n(k_{i+1}+1)\varepsilon_i+\sum\limits_{i=1}^n(\beta_i-\beta_{-i})\varepsilon_i). \label{5.79}
		\end{eqnarray}
		Hence, by (\ref{5.78}) and (\ref{5.79}),
		\begin{eqnarray}
				& &\dim(\mathscr{A}_{r_s}\otimes V_{2n+1}(\sum\limits_{i=1}^n\mu_i\varepsilon_i+\sum\limits_{i=s-1}^n(k_{i+1}+1)\varepsilon_i))\nonumber\\
				&=&(\sum\limits_{\beta\in\Upsilon_s}+\sum\limits_{\beta\in\widehat{\Upsilon}_s\setminus\Upsilon_s})d_{2n+1}(\sum\limits_{i=1}^n\mu_i\varepsilon_i+\sum\limits_{i=s-1}^n(k_{i+1}+1)\varepsilon_i+\sum\limits_{i=1}^n(\beta_i-\beta_{-i})\varepsilon_i)\nonumber\\
				&=&\!\!\sum_{\beta\in\widehat{\Upsilon}_{s+1}\setminus \Upsilon_{s+1}}\!\!d_{2n+1}(\sum\limits_{i=1}^n\mu_i\varepsilon_i+\sum\limits_{i=s}^n(k_{i+1}+1)\varepsilon_i+\sum\limits_{i=1}^n(\beta_i-\beta_{-i})\varepsilon_i)\nonumber\\
				& &+\!\!\sum\limits_{\beta\in\widehat{\Upsilon}_s\setminus\Upsilon_s}\!\!d_{2n+1}(\sum\limits_{i=1}^n\mu_i\varepsilon_i+\sum\limits_{i=s-1}^n(k_{i+1}+1)\varepsilon_i+\sum\limits_{i=1}^n(\beta_i-\beta_{-i})\varepsilon_i).\label{5.80}
		\end{eqnarray}
		Since $\widehat{\Upsilon}_2\setminus\Upsilon_2=\emptyset$, multiplying (\ref{5.80}) by $(-1)^{n-s}$ and summing over $s\in\overline{2,n}$, we get
		\begin{eqnarray}
				& &\sum_{s=2}^n(-1)^{n-s}\dim(\mathscr{A}_{r_s}\otimes V_{2n+1}(\sum\limits_{i=1}^n\mu_i\varepsilon_i+\sum\limits_{i=s-1}^n(k_{i+1}+1)\varepsilon_i))\nonumber\\
				&=&\!\!\sum_{\beta\in\widehat{\Upsilon}_{n+1}\setminus \Upsilon_{n+1}}\!\!d_{2n+1}(\sum\limits_{i=1}^n\mu_i\varepsilon_i+(k_{n+1}+1)\varepsilon_n+\sum\limits_{i=1}^n(\beta_i-\beta_{-i})\varepsilon_i). \label{5.81}
		\end{eqnarray}
		Notice $\widehat{\Omega}_r\setminus\Omega_r=\Upsilon_{n+1}$, we conclude by (\ref{5.79}) and (\ref{5.81}) that
		\begin{eqnarray}
				& &\dim(\widehat{M}_r/(V_{2n+3})_r)\nonumber\\
				&=&\sum\limits_{\beta\in\Upsilon_{n+1}}d_{2n+1}(\sum_{i=1}^n\mu_i\varepsilon_i+(k_{n+1}+1)\varepsilon_n+\sum\limits_{i=1}^n(\beta_i-\beta_{-i})\varepsilon_i)\nonumber\\
				&=&(\sum\limits_{\beta\in\widehat{\Upsilon}_{n+1}}\!-\!\!\sum\limits_{\beta\in\widehat{\Upsilon}_{n+1}\setminus\Upsilon_{n+1}}\!\!)d_{2n+1}(\sum_{i=1}^n\mu_i\varepsilon_i+(k_{n+1}+1)\varepsilon_n+\sum\limits_{i=1}^n(\beta_i-\beta_{-i})\varepsilon_i)\nonumber\\
				&=&\sum_{s=2}^{n+1}(-1)^{n-s+1}\dim(\mathscr{A}_{r_s}\otimes V_{2n+1}(\sum\limits_{i=1}^n\mu_i\varepsilon_i+\sum\limits_{i=s-1}^n(k_{i+1}+1)\varepsilon_i))\nonumber\\
				&=&\sum_{s=2}^{n+1}(-1)^{n-s+1}\binom{2n+r_s}{2n}d_{2n+1}(\sum\limits_{i=1}^n\mu_i\varepsilon_i+\sum\limits_{i=s-1}^n(k_{i+1}+1)\varepsilon_i).
		\end{eqnarray}
		Here we make the convention that for $p\in\mathbb{Z}$ and $q\in\mathbb{N}$, \begin{eqnarray}
			\binom{p}{q}=\left\lbrace\begin{array}{ll}
				\frac{p!}{q!(p-q)!},&\text{if }p\geqslant q,\\
				0,&\text{if }p<q.
			\end{array}\right.
		\end{eqnarray}
		Hence we gain the formula of the dimension for homogeneous subspace $(V_{2n+3}(\lambda))_r$ when $0\leqslant r\leqslant \mu_{n+1}$.
		\begin{eqnarray}
				& &\dim(V_{2n+3}(\lambda))_r=\dim\widehat{M}_r-\dim(\widehat{M}_r/(V_{2n+3}(\lmd))_r)\nonumber\\
				&=&\binom{2n+r}{2n}d_{2n+1}(\sum_{i=1}^n\mu_i\varepsilon_i)+\sum_{s=2}^{n+1}(-1)^{n-s}\binom{2n+r_s}{2n}d_{2n+1}(\sum\limits_{i=1}^n\mu_i\varepsilon_i+\sum\limits_{i=s-1}^n(k_{i+1}+1)\varepsilon_i)\nonumber\\
				&=&\sum_{s=2}^{n+2}(-1)^{n-s}\binom{2n+r-\sum\limits_{j=s}^{n+1}(k_j+1)}{2n}d_{2n+1}(\sum\limits_{i=1}^n\mu_i\varepsilon_i+\sum\limits_{i=s-1}^n(k_{i+1}+1)\varepsilon_i).\label{5.84}
		\end{eqnarray}
		
		Replacing $k_1$ by $2\mu_1$ and $k_i$ by $\mu_i-\mu_{i-1}$ for $i\in\overline{2,n+1}$ in (\ref{5.84}) (cf. (\ref{5.2})),  the dimension for $(V_{2n+3}(\lambda))_r$ is equal to
		\begin{eqnarray}
		\sum\limits_{s=0}^n(-1)^{n-s}\binom{n+s+r+\mu_{s+1}-\mu_{n+1}}{2n}d_{2n+1}(\sum\limits_{i=1}^s\mu_i\varepsilon_i+\sum\limits_{i=s+1}^n(\mu_{i+1}+1)\varepsilon_i)
		\end{eqnarray}

		Lemma \ref{Lem1} shows that for $\mu_{n+1}<r\leqslant 2\mu_{n+1}$,
		\begin{eqnarray}
				& &\dim(V_{2n+3}(\lambda))_r=\dim(V_{2n+3}(\lambda))_{2\mu_{n+1}-r}\nonumber\\
				&=&\!\sum\limits_{s=0}^n(-1)^{n-s}\binom{n+s\!-\!r\!+\!\mu_{s+1}\!+\!\mu_{n+1}}{2n}d_{2n+1}(\sum\limits_{i=1}^s\mu_i
\varepsilon_i+\!\!\sum\limits_{i=s+1}^n\!(\mu_{i+1}\!+\!1)\varepsilon_i). \label{5.85}
		\end{eqnarray}
	    Therefore, if $2\mu_{n+1}$ is odd, (\ref{5.84}) and (\ref{5.85}) yield
	    \begin{eqnarray}
	    		& &\dim V_{2n+3}(\lambda)=2\sum\limits_{r=0}^{\mu_{n+1}-1/2}\dim((V_{2n+3}))_r\nonumber\\
	    		&=&2\sum\limits_{s=0}^n(-1)^{n-s}\sum\limits_{r=0}^{\mu_{n+1}-1/2}\binom{n+s+r+\mu_{s+1}-\mu_{n+1}}{2n}d_{2n+1}(\sum\limits_{i=1}^s\mu_i\varepsilon_i+\!\!\sum\limits_{i=s+1}^n(\mu_{i+1}+1)\varepsilon_i)\nonumber\\
	    		&=&2\sum_{s=2}^{n+2}(-1)^{n-s}\binom{n+s+\frac{1}{2}+\mu_{s+1}}{2n+1}d_{2n+1}(\sum\limits_{i=1}^s\mu_i\varepsilon_i+\!\!\sum\limits_{i=s+1}^n(\mu_{i+1}+1)\varepsilon_i), \label{5.86}
	    \end{eqnarray}
	   and if $2\mu_{n+1}$ is even, we have
	     \begin{eqnarray}
	     	& &\dim V_{2n+3}(\lambda)=\sum\limits_{r=0}^{\mu_{n+1}}\dim((V_{2n+3}))_r+\!\sum\limits_{r=0}^{\mu_{n+1}-1}\dim((V_{2n+3}))_r\nonumber\\
	     	&=&\!\!\sum\limits_{s=0}^n(-1)^{n-s}\sum\limits_{r=0}^{\mu_{n+1}}\binom{n\!+\!s\!+\!r\!+\!\mu_{s+1}\!-\!\mu_{n+1}}{2n}d_{2n+1}(\sum\limits_{i=1}^s\mu_i\varepsilon_i+\!\!\sum\limits_{i=s+1}^n(\mu_{i+1}+1)\varepsilon_i)\nonumber\\
	     	& &+\sum\limits_{s=0}^n(-1)^{n-s}\!\sum\limits_{r=0}^{\mu_{n+1}-1}\!\!\binom{n\!+\!s\!+\!r\!+\!\mu_{s+1}\!-\!\mu_{n+1}}{2n}d_{2n+1}(\sum\limits_{i=1}^s\mu_i\varepsilon_i+\!\!\sum\limits_{i=s+1}^n\!(\mu_{i+1}+1)\varepsilon_i)\nonumber
	     	 \\&=&\!\!\sum\limits_{s=0}^n(-1)^{n-s}\binom{n\!+\!s\!+\!1\!+\!u_{s+1}}{2n\!+\!1}d_{2n+1}
(\sum\limits_{i=1}^s\mu_i\varepsilon_i+\!\!\sum\limits_{i=s+1}^n(\mu_{i+1}+1)\varepsilon_i)\nonumber
	      \\	& &\!\!+\sum\limits_{s=0}^n(-1)^{n-s}\binom{n\!+\!s\!+\!\mu_{s+1}}{2n+1}d_{2n+1}(\sum\limits_{i=1}^s\mu_i\varepsilon_i+\!\!\sum\limits_{i=s+1}^n(\mu_{i+1}+1)\varepsilon_i)\nonumber\\
          	  &=&\!\!\!\! \!\! \sum\limits_{s=0}^n(-1)^{n-s}\binom{n\!+\!s\!+\!\mu_{s+1}}{2n}\frac{2s\!+\!2\mu_{s+1}\!+\!1}{2n\!+\!1}d_{2n+1}(\sum\limits_{i=1}^s\mu_i\varepsilon_i+\!\!\sum\limits_{i=s+1}^n(\mu_{i+1}+1)\varepsilon_i).\label{5.87}
         \end{eqnarray}
Thus we have the identities
\begin{eqnarray}d_{2n+3}(\sum_{i=1}^{n+1}\mu_i\ves_i) &=&2\sum_{s=0}^{n}(-1)^{n-s}\binom{n+s+\frac{1}{2}+\mu_{s+1}}{2n+1}\nonumber\\ & & \times d_{2n+1}(\sum\limits_{i=1}^s\mu_i\varepsilon_i+\!\!\sum\limits_{i=s+1}^n(\mu_{i+1}+1)\varepsilon_i)  \end{eqnarray}
if $2\mu_{n+1}$ is odd, and
\begin{eqnarray}d_{2n+3}(\sum_{i=1}^{n+1}\mu_i\ves_i) &=&\sum\limits_{s=0}^n(-1)^{n-s}\binom{n\!+\!s\!+\!\mu_{s+1}}{2n}\frac{2s\!+\!2\mu_{s+1}\!+\!1}{2n\!+\!1}\nonumber\\ & & \times d_{2n+1}(\sum\limits_{i=1}^s\mu_i\varepsilon_i+\!\!\sum\limits_{i=s+1}^n(\mu_{i+1}+1)\varepsilon_i)
         \end{eqnarray}
when $2\mu_{n+1}$ is even.\psp

	      As an application, we consider the case of $Steinberg$ $module$, whose highest weight is $\lambda=k(\lambda_1+\lambda_2+\cdots+\lambda_{n+1})$, where $\lambda_i$'s are the fundamental
	      weights. In this case $\mu_i=(i-1/2)k,~i\in\overline{1,n+1}$. Recall that we take the set of positive roots of $\mathfrak{o}(2n+1)$
	      \begin{eqnarray}
	      	\Phi_{B_n}^{+}=\{\varepsilon_j\pm\varepsilon_i,~\varepsilon_r~|~1\leqslant i< j\leqslant n,~1\leqslant r\leqslant n\}.
	      \end{eqnarray}
          Let
          \begin{eqnarray}
          	\rho=\frac{1}{2}\sum_{\alpha\in\Phi_{B_n}^{+}}\alpha=\frac{1}{2}\sum_{i=1}^n(2i-1)\varepsilon_i.
          \end{eqnarray}
          By the dimension formula in \cite{H}, its dimension
	      \begin{eqnarray}
	      	d_{2n+3}(\lambda)=(k+1)^{(n+1)^2}.
	      \end{eqnarray}

          Denote the weight in the last factor in (4.85) and (4.86) as
          \begin{eqnarray}
          	\lambda_s=\sum\limits_{i=1}^s\mu_i\varepsilon_i+\sum\limits_{i=s+1}^n(\mu_{i+1}+1)\varepsilon_i.
          \end{eqnarray}
          We have
          \begin{eqnarray}
          	\prod_{1\leqslant i<j\leqslant s}\frac{(\lambda_s+\rho,\varepsilon_j\pm\varepsilon_i)}{(\rho,\varepsilon_j\pm\varepsilon_i)}=\prod_{1\leqslant i<j\leqslant s} (k+1)^2=(k+1)^{s^2-s},
          \end{eqnarray}
          \begin{eqnarray}
          	\prod_{s+1\leqslant i<j\leqslant n}\frac{(\lambda_s+\rho,\varepsilon_j\pm\varepsilon_i)}{(\rho,\varepsilon_j\pm\varepsilon_i)}&=&\prod_{s+1\leqslant i<j\leqslant n} \frac{(k+1)^2}{j+i-1}\\&=&\frac{(2n)!(2s+1)!(k+1)^{(n-s)(n-s-1)}}{(n+s+1)!(n+s)!},
          \end{eqnarray}
          \begin{eqnarray}
          	\prod_{1\leqslant i\leqslant s\atop s+1\leqslant j\leqslant n}\frac{(\lambda_s+\rho,\varepsilon_j\pm\varepsilon_i)}{(\rho,\varepsilon_j\pm\varepsilon_i)}&=&\prod_{1\leqslant i\leqslant s\atop s+1\leqslant j\leqslant n}\frac{(j-i+1)(j+i)(k+1)^2}{(j-i)(j+i-1)}\\
          	&=&\frac{(n+s)!(k+1)^{2s(n-s)}}{(n-s)!(2s)!},
          \end{eqnarray}
          \begin{eqnarray}
          	\prod_{r=1}^s\frac{(\lambda_s+\rho,\varepsilon_r)}{(\rho,\varepsilon_r)}=\prod_{r=1}^s(k+1)=(k+1)^s,
          \end{eqnarray}
          \begin{eqnarray}
          \prod_{r=s+1}^n\frac{(\lambda_s+\rho,\varepsilon_r)}{(\rho,\varepsilon_r)}=\prod_{r=s+1}^n\frac{(2r+1)(k+1)}{2r-1}=\frac{(2n+1)(k+1)^{n-s}}{2s+1}.
          \end{eqnarray}
          Therefore,
	      \begin{eqnarray}
	      	d_{2n+1}(\sum\limits_{i=1}^s\mu_i\varepsilon_i+\!\!\sum\limits_{i=s+1}^n(\mu_{i+1}+1)\varepsilon_i)=\prod_{\alpha\in\Phi_{B_n}^+}\frac{(\lambda_s+\rho,\alpha)}{(\rho,\alpha)}=\binom{2n+1}{n-s}(k+1)^{n^2}. \label{5.88}
	      \end{eqnarray}
	      Thus, (4.85) becomes
	      \begin{equation}
	      	2\sum_{s=0}^{n}(-1)^{n-s}\binom{n+(s+\frac{1}{2})(k+1)}{2n+1}\binom{2n+1}{n-s}=(k+1)^{2n+1} ,~\text{if $k$ is odd.}\label{5.89}
	      \end{equation}
	      and (4.86) becomes
	      \begin{eqnarray}
	      	\sum\limits_{s=0}^{n}(-1)^{n-s}\binom{n-\frac{1}{2}+(s+\frac{1}{2})(k+1)}{2n}\binom{2n+1}{n-s}\frac{2s+1}{2n+1}=(k+1)^{2n},~\text{if $k$ is even.} \label{5.90}
	      \end{eqnarray}

       \subsection{Even Case}

       Now we turn to the even case by parallel methods. Let $\lambda=\mu_1\varepsilon_1+\mu_2\varepsilon_2+\cdots+\mu_{n+1}\varepsilon_{n+1}$ be a dominant weight of $\mathfrak{o}(2n+2)$, where the coefficients $\mu_i$'s satisfy the conditions
       \begin{equation}
       	\mu_2+\mu_1,~\mu_2-\mu_1,~\mu_3-\mu_2,~\cdots,\mu_{n+1}-\mu_n\in\mathbb{N}.\label{5.91}
       \end{equation}
       We assume that
       \begin{equation}
       	k_1=\mu_2+\mu_1,~k_i=\mu_i-\mu_{i-1},~i\in\overline{2,n+1}. \label{5.92}
       \end{equation}
       Since the representation of $\mathfrak{o}(2n+2)$ is derived by $\mathfrak{o}(2n+3)$ by dropping the variable $x_{i,0},~i\in\overline{0,n}$ and adjusting the coefficients of $\lambda$. We obtain the set of $\mathfrak{o}(2n)$-singular vectors in $V_{2n+2}(\lambda)$ from (\ref{5.42})-(\ref{5.14.3}). In such variation,  $x_{0,-(n+1)}$ becomes $f_{n,1}(-1)$, and $f_{n,0}(0)$ becomes $0$, and the others keep the notations unchanged in the sense of (\ref{3.6})-(\ref{3.13}). Namely, the polynomials
       \begin{equation}
       	f_{n,1}(-1)^{\gamma_{-1}}\prod_{i=1}^nf_{n,i}(i)^{\gamma_i}\prod_{j=2}^{n}f_{n,j}(-(n+1))^{\gamma'_j}\prod_{k=1}^{n-1}x_{k,-(n+1)}^{\gamma_{-(k+1)}},\label{5.94}
       \end{equation}
       with conditions
       \begin{eqnarray}
       		& &\gamma_n+\gamma'_n\in\overline{0,k_{n+1}};\label{5.95.1}\\
       		& &\gamma_i+\gamma_{-(i+1)}+\gamma'_i\in\overline{0,k_{i+1}},~i\in\overline{2,n-1};\label{5.95.2}\\
       		& &\gamma_1+\gamma_{-2}\in\overline{0,k_2};\label{5.95.3}\\
       		& &\gamma_{-1}+\gamma_{-2}\in\overline{0,k_1};  \label{5.95.4}
       \end{eqnarray}
       constitute a set of $\mathfrak{o}(2n)$-singular vectors in $V_{2n+2}(\lambda)$, which we write as $\mathcal{H}_e(\lambda)$. We again define a total order in $\mathbb{C}_n^0[X]$. Given two monomials  $X^{\alpha},X^{\alpha'}\in\mathbb{C}_n^0[X]$, we write
        \begin{eqnarray}
        	\text{lexdeg}(X^{\alpha})=(\alpha_{0,0},\alpha_{1,1},\alpha_{1,-1},\alpha_{1,0},\cdots,\alpha_{n,0}).\label{5.96}
        \end{eqnarray}
        and say that
        \begin{eqnarray}
        	\text{lexdeg} X^{\alpha}>\text{lexdeg} X^{\alpha'} \label{5.97}
        \end{eqnarray}
        if $|\alpha|>|\alpha'|$, or $|\alpha|=|\alpha'|$ and $\alpha>\alpha'$ according to the order
       \begin{eqnarray}
       	\alpha_{1,1},\alpha_{1,-1},\cdots,\alpha_{n,1},\alpha_{n,-1},\cdots,\alpha_{n,n},\alpha_{n,-n}.\label{5.98}
       \end{eqnarray}

       For $f\in\mathbb{C}_n^0[X]$, let $\mathscr{L}(f)$ be the monomial with minimal lexicographic degree  in $f$ in the above sense. Thus
       \begin{eqnarray}& &       	\mathscr{L}\big(f_{n,1}(-1)^{\gamma_{-1}}\prod_{i=1}^nf_{n,i}(i)^{\gamma_i}\prod_{j=2}^{n}f_{n,j}
       (-(n+1))^{\gamma'_j}\prod_{k=1}^{n-1}x_{k,-(n+1)}^{\gamma_{-(k+1)}}\big)\nonumber\\&=&\prod_{s=1}^nx_{n,s}^{\gamma_s+\gamma'_s}\prod_{t=1}^nx_{n,-t}^{\gamma_{-t}+\gamma'_t}. \label{5.99}
       \end{eqnarray}
       Define a relation in $\mathbb{C}^0_n[X]$ as follows:
       \begin{equation}
       	\forall f_1,~f_2\in \mathbb{C}^0_n[X],~~f_1\sim f_2~~~~\text{if }\mathscr{L}(f_1)=\mathscr{L}(f_2).
       \end{equation}
       This is an equivalence relation. Let
       \begin{equation}
       	\mathcal{B}_e(\lambda)\text{ be a set of representatives of the elements in $\mathcal{H}_e(\lambda)/\sim$}
       \end{equation}
       and let
       \begin{equation}
       	\mathcal{M}_e(\lambda)=\mathscr{L}(\mathcal{B}_e(\lambda))=\big\{\prod\limits_{|i|=1}^nx_{n,i}^{\beta_i}\mid\exists f\in \mathcal{H}_e(\lambda)~\text{such that}~\mathscr{L}(f)=\prod\limits_{|i|=1}^nx_{n,i}^{\beta_i} \big\}
       \end{equation}
       Note that $\mathcal{B}_e(\lambda)$ is a linearly independent subset of $\mathcal{H}_e(\lambda)$ and $|\mathcal{M}_e(\lambda)|=|\mathcal{B}_e(\lambda)|$.

       Take $f\in \mathcal{H}_e(\lambda)$ and write $\mathscr{L}(f)=\prod\limits_{|i|=1}^nx_{n,i}^{\beta_i}$. We calculate by (\ref{3.15}) that the weight of $f$ is
       \begin{equation}
       	\lambda(f)=\sum_{i=1}^{n}(\mu_{i}+\beta_i-\beta_{-i})\varepsilon_i.\label{5.100}
       \end{equation}
       Denote
       \begin{eqnarray}
       	&\kappa_i=\mu_i+\beta_i,~\nu_i=\kappa_i-\beta_{-i},~i=2,\cdots,n,\label{5.100.1}\\
       	&\nu_1=\mu_1+\beta_1-\beta_{-1},\kappa_1=\max\{|\mu_1|,|\nu_1|\}+\min\{\beta_{-1},\beta_1\}.\label{5.100.2}
       \end{eqnarray}
       Thus we obtain two series $\vec{\kappa}=(\kappa_1,\cdots,\kappa_n)$ and $\vec{\nu}=(\nu_1,\nu_2,\cdots,\nu_n)$. Condition (\ref{5.95.1})-(\ref{5.95.4}) leads to the inequality
       \begin{equation}
       	\begin{aligned}
       		&|\mu_1|\leqslant \kappa_1\leqslant \mu_2\leqslant\kappa_2 \cdots\leqslant \mu_n\leqslant \kappa_n\leqslant \mu_{n+1},\\
       		&|\nu_1|\leqslant \kappa_1\leqslant \nu_2\leqslant\kappa_2 \cdots\leqslant \nu_n\leqslant \kappa_n.
       	\end{aligned} \label{5.102}
       \end{equation}
       Moreover, (\ref{5.100.1}) and (\ref{5.100.2}) imply
       \begin{equation}
       	\begin{aligned}
       		&\text{all the } \kappa_i \text{ and } \nu_i \text{ being nonnegative integers }\\  &\text{or nonnegative half integers as } {\mu_i}'s. \label{5.103}
       	\end{aligned}
       \end{equation}
It is just the Zhelobenko branching rule for $\mathfrak{o}(2n+2)\downarrow \mathfrak{o}(2n)$. Conversely, we are given two series $\vec{\kappa}$ and $\vec{\nu}$ satisfying (\ref{5.102}) and (\ref{5.103}).
       For $2\leqslant i\leqslant n$, let
       \begin{equation}
       	(\gamma_i,\gamma_{-i},\gamma'_i)=\left\lbrace \begin{array}{ll}
       		(0,\mu_i-\nu_i,\kappa_i-\mu_i),&\text{if~}\mu_i>\nu_i,\\
       		(\nu_i-\mu_i,0,\kappa_i-\nu_i),&\text{if~}\mu_i\leqslant\nu_i
       	\end{array}\label{5.104}
       	\right.
       \end{equation}
       and
        \begin{equation}
       	(\gamma_1,\gamma_{-1})=\kappa_1-\max\{|\mu_1|,|\nu_1|\}+\left\lbrace \begin{array}{ll}
       		(0,\mu_1-\nu_1),&\text{if~}\mu_1>\nu_1,\\
       		(\nu_1-\mu_1,0),&\text{if~}\mu_1\leqslant\nu_1.
       	\end{array}\label{5.105}
       	\right.
       \end{equation}
       Then we have
       \begin{eqnarray}
       	\gamma_n+\gamma'_n=\kappa_n-\mu_n\leqslant k_{n+1},	\label{5.106}
       \end{eqnarray}
       \begin{eqnarray}
       		& &\gamma_i+\gamma_{-(i+1)}+\gamma'_i=\kappa_i-\mu_i+\max\{0,\mu_{i+1}-\nu_{i+1}\}\nonumber\\
       		&=&\max\{\kappa_i-\mu_i,\mu_{i+1}-\mu_i-(\nu_{i+1}-\kappa_i)\}\leqslant k_{i+1},~~2\leqslant i\leqslant n-1,\label{5.107}
       \end{eqnarray}
        When $\mu_1>\nu_1$ and $\mu_2>\nu_2$,
        \begin{eqnarray}
        		\gamma_1+\gamma_{-2}&=&\kappa_1-\max\{|\mu_1|,|\nu_1|\}+\mu_2-\nu_2\nonumber\\
        		&\leqslant & -\max\{|\mu_1|,|\nu_1|\}+\mu_2\leqslant -|\mu_1|+\mu_2\leqslant k_2.\label{5.108}\\
         		\gamma_{-1}+\gamma_{-2}&=&\mu_1-\nu_1+\kappa_1-\max\{|\mu_1|,|\nu_1|\}+\mu_2-\nu_2\nonumber\\
         		&\leqslant &\mu_1+\mu_2-\nu_1-|\nu_1|\leqslant \mu_1+\mu_2=k_1.\label{5.109}
         \end{eqnarray}

         When $\mu_1>\nu_1$ and $\mu_2\leqslant\nu_2$,
         \begin{eqnarray}
         		&\gamma_1+\gamma_{-2}=\kappa_1-\max\{|\mu_1|,|\nu_1|\}
         		\leqslant \mu_2-\max\{|\mu_1|,|\nu_1|\}\leqslant k_2,\label{5.110}\\
         		&\gamma_{-1}+\gamma_{-2}=\mu_1-\nu_1+\kappa_1-\max\{|\mu_1|,|\nu_1|\}
         		\leqslant \mu_1-\nu_1+\mu_2-|\nu_1|\leqslant k_1.\label{5.111}
         \end{eqnarray}

          When $\mu_1\leqslant \nu_1$ and $\mu_2>\nu_2$,
          \begin{eqnarray}
          		\gamma_1+\gamma_{-2}&=&\nu_1-\mu_1+\kappa_1-\max\{|\mu_1|,|\nu_1|\}+\mu_2-\nu_2\nonumber\\
          		&\leqslant & k_2+\nu_1-\max\{|\mu_1|,|\nu_1|\}\leqslant k_2,\label{5.112}
          \end{eqnarray}
          \begin{eqnarray}
          		\gamma_{-1}+\gamma_{-2}&=&\kappa_1-\max\{|\mu_1|,|\nu_1|\}+\mu_2-\nu_2\nonumber\\
          		&\leqslant &\mu_2-|\mu_1|\leqslant k_1.\label{5.113}
          \end{eqnarray}

          When $\mu_1\leqslant \nu_1$ and $\mu_2\leqslant \nu_2$,
          \begin{eqnarray}
          		&\gamma_1+\gamma_{-2}=\nu_1-\mu_1+\kappa_1-\max\{|\mu_1|,|\nu_1|\}
          		\leqslant k_2+\nu_1-\max\{|\mu_1|,|\nu_1|\}\leqslant k_2,\label{5.114}\\
          		&\gamma_{-1}+\gamma_{-2}=\kappa_1-\max\{|\mu_1|,|\nu_1|\}
          		\leqslant \mu_2-|\mu_1|\leqslant k_1.\label{5.115}
          \end{eqnarray}

          Note that the inequalities (\ref{5.106})-(\ref{5.115})  about $\gamma_i,\gamma_{-1},\gamma_i'$  coincide with conditions (\ref{5.95.1})-(\ref{5.95.4}), Thus we construct an one-to-one correspondence between $\mathcal{M}_e(\lambda)$ and the set of pairs of series $\{\vec{\kappa},\vec{\nu}\}$ satisfying (\ref{5.102}) and (\ref{5.103}). Therefore, we have
          \begin{theorem}\label{Th36}
          	$\mathcal{B}_e(\lambda)$ is a basis of the subspace of $\mathfrak{o}(2n)$-singular vectors in $V_{2n+2}(\lambda)$.
          \end{theorem}

          Now let $\mathcal{H}_e(\lambda)_r$ (respectively, $\mathcal{B}_e(\lambda)_r,~\mathcal{M}_e(\lambda)_r$) be the homogeneous subset of $\mathcal{H}_e(\lambda)$ (respectively, $\mathcal{B}_e(\lambda),~\mathcal{M}_e(\lambda)$) with degree $r$ in $\{x_{n,\pm i}~|~i\in\overline{1,n}\}$. By the definition, $\mathscr{L}|_{\mathcal{B}_e(\lambda)_r}:\mathcal{B}_e(\lambda)_r\longrightarrow\mathcal{M}_e(\lambda)_r$ is bijective and $\mathcal{B}_e(\lambda)_r\neq \emptyset$ only if $0\leqslant r\leqslant 2\mu_{n+1}$.

          For $\beta=(\beta_{-n},\cdots,\beta_{-1},\beta_1,\cdots,\beta_n)\in\mathbb{N}^{2n}$, we set
          \begin{eqnarray}
          		\lambda(\beta)=\sum_{i=1}^{n}(\mu_{i}+\beta_i-\beta_{-i})\varepsilon_i. \label{5.116}
        \end{eqnarray}

       Let $|\beta|=\sum\limits_{|i|=1}^n\beta_i$ be the length of $\beta$ and let $X_n^{\beta}=\prod\limits_{|i|=1}^nx_{n,i}^{\beta_i}$ be the monomial in $\mathbb{C}[x_{n,\pm 1},\cdots,x_{n,\pm n}]$ with degree $\beta$. Let $\widehat{\Omega}_r$ be the set of $2n$-tuples $\beta\in\mathbb{N}^{2n}$ satisfying following conditions
       \begin{eqnarray}
       	&0\leqslant \beta_i\leqslant k_{i+1},~i\in\overline{0,n-1},~0\leqslant \beta_{-1}\leqslant k_1, \label{5.117}\\
       	&0\leqslant \beta_{-i}\leqslant \beta_i-\beta_{i-1}+k_i,~i\in\overline{3,n},~|\beta|=r,\\
       	&\beta_{-2}\leqslant \mu_2+\beta_2-\max\{|\frac{k_1-k_2}{2}|,|\frac{k_1-k_2}{2}+\beta_1-\beta_{-1}|\}\label{5.118}
       \end{eqnarray}
       and
       \begin{eqnarray}
       	\Omega_r=\{\beta\in\widehat{\Omega}_r~|~0\leqslant \beta_n\leqslant k_{n+1}\}.\label{5.119}
       \end{eqnarray}
       Denote by $\msr A_r'$ the subspace of homogeneous polynomials in $\mathbb{C}[x_{n,\pm 1},\cdots,x_{n,\pm n}]$ with degree $r$. We have:

       \begin{theorem}\label{The7}
       	For $r\in\mathbb{N}$, the $\mathfrak{o}(2n)$-singular vectors in $\widehat{M}_r$ are
       	\begin{eqnarray}
       		\{f=f_{n,1}(-1)^{\gamma_{-1}}\prod_{i=1}^nf_{n,i}(i)^{\gamma_i}\prod_{j=2}^{n}f_{n,j}(-(n+1))^{\gamma'_j}\prod_{k=1}^{n-1}x_{k,-(n+1)}^{\gamma_{-(k+1)}}~|~\nonumber\\
       		\mathscr{L}(f)=X_n^{\beta}~\text{with }\beta\in\widehat{\Omega}_r\}, \label{5.120}
       	\end{eqnarray}
       	which generate all the irreducible $\mathfrak{o}(2n)$-submodules in
       	\begin{eqnarray}
       		\widehat{M}_r=\mathscr{A}'_r\otimes V_{2n}(\sum_{i=1}^n\mu_i\varepsilon_i). \label{5.121}
       	\end{eqnarray}
       	It implies the identity
       	\begin{eqnarray}
       		\sum_{\beta\in\widehat{\Omega}_r}d_{2n}(\lambda(\beta))=\binom{2n+r-1}{2n-1}d_{2n}(\sum_{i=1}^n\mu_i\varepsilon_i). \label{5.122}
       	\end{eqnarray}
       	
       	For $r \in\overline{0,2\mu_{n+1}}$, the $\mathfrak{o}(2n)$-singular vectors in $(V_{2n+2}(\lambda))_r=\overline{M}_r$ are
       	\begin{eqnarray}
       		\mathcal{B}_e(\lambda)_r=\{f\in\mathcal{M}_e(\lambda)~|~\mathscr{L}(f)=X_n^{\beta}~\text{with }\beta\in\Omega_r\}. \label{5.123}
       	\end{eqnarray}
       \end{theorem}\psp

       For $r\in\overline{0,2\mu_{n+1}}$, we take any $f\in\mathcal{B}_e(\lambda)_r$. Suppose
        \begin{eqnarray}
       	f=f_{n,1}(-1)^{\gamma_{-1}}\prod_{i=1}^nf_{n,i}(i)^{\gamma_i}\prod_{j=2}^{n}f_{n,j}(-(n+1))^{\gamma'_j}\prod_{k=1}^{n-1}x_{k,-(n+1)}^{\gamma_{-(k+1)}}.
       \end{eqnarray}
     Denote
       \begin{eqnarray}
       		& &\gamma_n''=k_{n+1}-\gamma_n-\gamma_n';\\
       		& &\gamma_i''=k_{i+1}-\gamma_i-\gamma_{-(i+1)}-\gamma_i',~i\in\overline{2,n-1};\\
       		& &\gamma_1''=k_2-\gamma_1-\gamma_{-2};\\
       		& &\gamma_{-1}''=k_1-\gamma_{-1}-\gamma_{-2}.
       \end{eqnarray}
       Let
       \begin{eqnarray}
       	\bar{f}=f_{n,1}(-1)^{\gamma_{-1}''}f_{n,1}(1)^{\gamma_1''}\prod_{i=2}^nf_{n,i}(i)^{\gamma_i}\prod_{j=2}^{n}f_{n,j}(-(n+1))^{\gamma''_j}\prod_{k=1}^{n-1}x_{k,-(n+1)}^{\gamma_{-(k+1)}}\in\mathcal{B}_e(\lambda)_r.
       \end{eqnarray}
       Similar to the proof of Lemma \ref{Lem5}, we have an one-to-one map between $\mathcal{M}_e(\lambda)_{r}$ and $\mathcal{M}_e(\lambda)_{2\mu_{n+1}-r}$ as $\mathscr{L}(f)\mapsto\mathscr{L}(\bar{f})$. Note that the weights of $f$ and $\bar{f}$ are
       \begin{eqnarray}
       	\lambda(f)=\lambda(\bar{f})=\sum_{i=1}^n(\mu_i+\gamma_i-\gamma_{-i})\varepsilon_i.
       \end{eqnarray}
       Thus we have
       \begin{lemma}\label{Lem8}
       	$\dim(V_{2n+2}(\lambda))_{r}=\dim(V_{2n+2}(\lambda))_{2\mu_{n+1}-r}$ for integer $r\in\overline{0,2\mu_{n+1}}$.
       \end{lemma}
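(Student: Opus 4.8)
The plan is to mimic exactly the proof of Lemma 4.3 in the odd case, transposing every ingredient through the dictionary given earlier in this subsection. Recall that the odd argument produced, for each $f\in\mathcal{B}_o(\lambda)_{r_1}$, a companion $\bar f\in\mathcal{H}_o(\lambda)_{r_2}$ with $r_1+r_2=2\mu_{n+1}$ by "complementing" each of the exponent-sum constraints $\gamma_i+\gamma_{-(i+1)}+\gamma'_i\in\overline{0,k_{i+1}}$ against its upper bound, then checked (i) that the new exponents satisfy the defining inequalities (\ref{5.95.1})--(\ref{5.95.4}) for membership in $\mathcal{H}_e(\lambda)$, (ii) that the total degree in the top variables becomes $r_2$, and (iii) that $f$ and $\bar f$ have the same weight, hence generate isomorphic (in particular equidimensional) irreducible $\mathscr{G}_n$-submodules.

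First I would write the explicit complemented exponents, which are already essentially displayed just before the statement: set $\gamma_n''=k_{n+1}-\gamma_n-\gamma_n'$, $\gamma_i''=k_{i+1}-\gamma_i-\gamma_{-(i+1)}-\gamma_i'$ for $i\in\overline{2,n-1}$, $\gamma_1''=k_2-\gamma_1-\gamma_{-2}$, and $\gamma_{-1}''=k_1-\gamma_{-1}-\gamma_{-2}$, and take $\bar f$ as displayed (keeping $\gamma_i$ for $i\in\overline{2,n}$, $\gamma_j'$ unchanged, and $\gamma_{-(k+1)}$ unchanged for $k\in\overline{1,n-1}$, while swapping the roles of $\gamma_{-1}$ and $\gamma_1$-type exponents through the $f_{n,1}(\pm1)$ factors). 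Then I would verify the defining conditions for $\bar f\in\mathcal{H}_e(\lambda)_{r}$: e.g. $\gamma_n+\gamma_n''=k_{n+1}-\gamma_n'\in\overline{0,k_{n+1}}$; $\gamma_i+\gamma_{-(i+1)}+\gamma_i''=k_{i+1}-\gamma_i'\in\overline{0,k_{i+1}}$ for $i\in\overline{2,n-1}$; $\gamma_1''+\gamma_{-2}=k_2-\gamma_1\in\overline{0,k_2}$; and $\gamma_{-1}''+\gamma_{-2}=k_1-\gamma_{-1}\in\overline{0,k_1}$ --- each being a one-line nonnegativity check exactly as in the odd case. The degree bookkeeping is identical to the odd computation: summing all exponents weighted by the degrees of the corresponding polynomials in $\{x_{n,\pm j}\}$ and using $\sum$ of the complemented constraints turns $r$ into $2\mu_{n+1}-r$, because $2\mu_{n+1}=k_1+2k_2+\cdots+2k_{n+1}$ in the even normalization (\ref{5.92}). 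Finally, from (\ref{5.100}) the weight of $f$ depends only on the $\gamma_i-\gamma_{-i}$ differences, which are unchanged in $\bar f$, so $\lambda(f)=\lambda(\bar f)$.

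I would then assemble the bijection $\mathcal{B}_e(\lambda)_{r}\to\mathcal{M}_e(\lambda)_{r}\to\mathcal{M}_e(\lambda)_{2\mu_{n+1}-r}\to\mathcal{B}_e(\lambda)_{2\mu_{n+1}-r}$ via $f\mapsto\mathscr{L}(f)\mapsto\mathscr{L}(\bar f)\mapsto(\mathscr{L}|_{\mathcal{B}_e(\lambda)})^{-1}(\mathscr{L}(\bar f))$, using Theorem 4.4 (bijectivity of $\mathscr{L}$ on $\mathcal{B}_e(\lambda)_r$) and formula (\ref{5.99}) to see that complementation is compatible with the leading-monomial map. Since each $f\in\mathcal{B}_e(\lambda)_r$ generates an irreducible $o(2n)$-submodule $U(o(2n))(f)$ of dimension $d_{2n}(\lambda(f))$ by Theorem 4.5, and $V_{2n+2}(\lambda)_r$ decomposes as the direct sum of these over $f\in\mathcal{B}_e(\lambda)_r$, summing $\dim U(o(2n))(f)=\dim U(o(2n))(\bar f)$ over the bijection yields $\dim(V_{2n+2}(\lambda))_r=\dim(V_{2n+2}(\lambda))_{2\mu_{n+1}-r}$.

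The only genuinely new point compared to the odd case is the handling of the first "block": in the odd setting the constraint was $\gamma_0+2\gamma_{-1}\in\overline{0,k_1}$ (with the factor $2$ coming from $f_{n,0}(0)$ appearing squared in $x_{n,-(n+1)}$), whereas here $f_{n,0}(0)$ has been specialized to $0$ and instead one has the two linear constraints $\gamma_1+\gamma_{-2}\in\overline{0,k_2}$ and $\gamma_{-1}+\gamma_{-2}\in\overline{0,k_1}$ sharing the variable $\gamma_{-2}$; the complementation must replace $\gamma_1,\gamma_{-1}$ simultaneously while leaving $\gamma_{-2}$ fixed, and one has to check that $\mathscr{L}(\bar f)$ still lies in $\Omega_r$ with the correct degree --- so the main obstacle is just making sure the two coupled constraints complement consistently and that the leading monomial tracks correctly through the $f_{n,1}(1)^{\gamma_1''}f_{n,1}(-1)^{\gamma_{-1}''}$ swap. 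This is a short verification, entirely parallel to, and no harder than, the odd computation $\gamma_0''+2\gamma_{-1}=k_1-\gamma_0$ in Lemma 4.3.
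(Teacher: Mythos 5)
Your proposal is correct and is essentially the paper's own proof: the paper defines exactly the complemented exponents $\gamma_n''=k_{n+1}-\gamma_n-\gamma_n'$, $\gamma_i''=k_{i+1}-\gamma_i-\gamma_{-(i+1)}-\gamma_i'$ for $i\in\overline{2,n-1}$, $\gamma_1''=k_2-\gamma_1-\gamma_{-2}$, $\gamma_{-1}''=k_1-\gamma_{-1}-\gamma_{-2}$, forms the same $\bar f$ with the factor $f_{n,1}(-1)^{\gamma_{-1}''}f_{n,1}(1)^{\gamma_1''}$, and concludes via the bijection $\mathscr{L}(f)\mapsto\mathscr{L}(\bar f)$ and the weight comparison, ``similar to the proof of Lemma 4.3.''

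Two details in your bookkeeping need repair, though neither derails the argument. First, the identity you invoke for the degree count is the odd-case normalization; with (\ref{5.92}) one has $k_1+k_2=2\mu_2$ and hence $k_1+k_2+2k_3+\cdots+2k_{n+1}=2\mu_{n+1}$, not $k_1+2k_2+\cdots+2k_{n+1}=2\mu_{n+1}$. It is the former identity that closes the computation, since the first block contributes $\gamma_1''+\gamma_{-1}''=k_1+k_2-\gamma_1-\gamma_{-1}-2\gamma_{-2}$ to $\deg\bar f$ while each remaining block contributes $2k_{i+1}$ minus its old contribution. Second, the differences $\gamma_i-\gamma_{-i}$ are \emph{not} all unchanged: for $i=1$ one gets $\gamma_1''-\gamma_{-1}''=(k_2-k_1)-(\gamma_1-\gamma_{-1})=-2\mu_1-(\gamma_1-\gamma_{-1})$, so the first coordinate of the weight flips sign, $\mu_1+\gamma_1''-\gamma_{-1}''=-(\mu_1+\gamma_1-\gamma_{-1})$. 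The dimension count survives because $d_{2n}$ is invariant under $\varepsilon_1\mapsto-\varepsilon_1$ (the Weyl dimension formula for $D_n$ involves only the pairs $\varepsilon_j\pm\varepsilon_1$); the paper itself glosses over this point when it asserts $\lambda(f)=\lambda(\bar f)$.
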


        Applying Theorem \ref{The7} and Lemma \ref{Lem8} and using the same inclusion-exclusion process as in the odd case, we can deduce the formula of the dimension of homogeneous subspace with $0\leqslant r\leqslant \mu_{n+1}$,
       	\begin{eqnarray}
       		\!\!\!\!\!\!& &\!\!\dim(V_{2n+2}(\lambda))_r=\dim (V_{2n+2}(\lambda))_{2\mu_{n+1}-r}\nonumber\\
       	\!\!\!\!\!\!&=&\!\!\sum_{s=0}^{n}(-1)^{n-s}\binom{n\!+\!s\!+\!r\!-\!1\!+\!\mu_{s+1}\!-\!\mu_{n+1}}{2n\!-\!1}d_{2n}(\sum\limits_{i=1}^s\mu_i\varepsilon_i+\!\!\sum\limits_{i=s+1}^n(\mu_{i+1}\!+\!1)\varepsilon_i).\label{5.124}
       \end{eqnarray}
       Therefore, if $2\mu_{n+1}$ is odd,
       \begin{eqnarray}
       	\!\!\!\!& &\!\!\dim V_{2n+2}(\lambda)=2\sum\limits_{r=0}^{\mu_{n+1}-1/2}\dim(V_{2n+2}(\lambda))_r\nonumber\\
       	\!\!\!\!&=&\!\!\sum_{s=0}^{n}(-1)^{n-s}\sum\limits_{r=0}^{\mu_{n+1}-1/2}\binom{n\!+\!s\!+\!r\!-\!1\!+\!\mu_{s+1}\!-\!\mu_{n+1}}{2n\!-\!1}d_{2n}(\sum\limits_{i=1}^s\mu_i\varepsilon_i+\!\!\sum\limits_{i=s+1}^n(\mu_{i+1}\!+\!1)\varepsilon_i)\nonumber\\
       	\!\!\!\!&=&\!\!\sum_{s=0}^{n}(-1)^{n-s}\binom{n\!+\!s\!-\!\frac{1}{2}\!+\!\mu_{s+1}\!}{2n}
       d_{2n}(\sum\limits_{i=1}^s\mu_i\varepsilon_i+\!\!\sum\limits_{i=s+1}^n(\mu_{i+1}\!+\!1)\varepsilon_i),
       \end{eqnarray}
       and if $2\mu_{n+1}$ is even,
       \begin{eqnarray}
       	\!\!\!\!& &\!\!\dim V_{2n+2}(\lambda)=\sum\limits_{r=0}^{\mu_{n+1}}\dim(V_{2n+2}(\lambda))_r+\sum\limits_{r=0}^{\mu_{n+1}-1}\dim(V_{2n+2}(\lambda))_r\nonumber\\
       	\!\!\!\!&=&\!\!\sum_{s=0}^{n}(-1)^{n-s}\binom{n\!+\!s\!-\!1\!+\!\mu_{s+1}\!}{2n-1}\frac{s+\mu_{s+1}}{n}d_{2n}
       (\sum\limits_{i=1}^s\mu_i\varepsilon_i+\!\!\sum\limits_{i=s+1}^n(\mu_{i+1}\!+\!1)\varepsilon_i).
       \end{eqnarray}
Thus we have the following identities
        \begin{equation}d_{2n+2}(\sum_{i=1}^{n+1}\mu_i\ves_i)
       =\!\!\sum_{s=0}^{n}(-1)^{n-s}\binom{n\!+\!s\!-\!\frac{1}{2}\!+\!\mu_{s+1}\!}{2n}
       d_{2n}(\sum\limits_{i=1}^s\mu_i\varepsilon_i+\!\!\sum\limits_{i=s+1}^n(\mu_{i+1}\!+\!1)\varepsilon_i) \label{4.151}
       \end{equation}
       if $2\mu_{n+1}$ is odd, and
            \begin{eqnarray}d_{2n+2}(\sum_{i=1}^{n+1}\mu_i\ves_i)
              &=&\!\!\sum_{s=0}^{n}(-1)^{n-s}\binom{n\!+\!s\!-\!1\!+\!\mu_{s+1}\!}{2n-1}\frac{s+\mu_{s+1}}{n}
       \nonumber\\ && \times d_{2n}(\sum\limits_{i=1}^s\mu_i\varepsilon_i+\!\!\sum\limits_{i=s+1}^n(\mu_{i+1}\!+\!1)\varepsilon_i) \label{4.152}
       \end{eqnarray}
when $2\mu_{n+1}$ is even.

       As an example, we also consider the $Steinberg$ $module$ with highest weight $\lambda=k(\lambda_1+\lambda_2+\cdots+\lambda_{n+1})$, where $\lambda_i$'s are the fundamental
       weights. In this case, $\mu_i=(i-1)k$ for $i\in\overline{1,n+1}$. Recall that the set of positive roots of $\mathfrak{o}(2n)$,
       \begin{eqnarray}
       	\Phi_{D_n}^+=\{\varepsilon_j\pm\varepsilon_i~|~1\leqslant i<j\leqslant n\},
       \end{eqnarray}
       and
       \begin{eqnarray}
       	\rho=\frac{1}{2}\sum_{\alpha\in \Phi_{D_n}^+}\alpha=\sum_{i=2}^n(i-1)\varepsilon_i.
       \end{eqnarray}
       By Weyl's dimension formula (e.g., cf. \cite{H}), we have
       \begin{eqnarray}
       	\dim V_{2n+2}(\lambda)=(k+1)^{n^2+n}.    \label{5.125}
       \end{eqnarray}
       Denote by
       \begin{eqnarray}
       	\lambda_s=\sum\limits_{i=1}^s\mu_i\varepsilon_i+\sum\limits_{i=s+1}^n(\mu_{i+1}+1)\varepsilon_i.
       \end{eqnarray}
       We have
       \begin{eqnarray}
       	\prod_{1\leqslant i<\leqslant j\leqslant s}\frac{(\lambda_s+\rho,\varepsilon_j\pm\varepsilon_i)}{(\rho,\varepsilon_j\pm\varepsilon_i)}=\prod_{1\leqslant i<j\leqslant s}(k+1)^2=(k+1)^{s(s-1)},
       \end{eqnarray}
        \begin{eqnarray}
        	\prod_{s+1\leqslant i<j\leqslant n}\frac{(\lambda_s+\rho,\varepsilon_j\pm\varepsilon_i)}{(\rho,\varepsilon_j\pm\varepsilon_i)}&=&\prod_{s+1\leqslant i<j\leqslant n}\frac{(j+i)(k+1)^2}{j+i-2}\\
        	&=&\frac{(2s)!(2n-1)!(k+1)^{(n-s)(n-s-1)}}{(n+s-1)!(n+s)!},
        \end{eqnarray}
        \begin{eqnarray}
        	\prod_{1\leqslant i\leqslant s\atop s+1\leqslant j\leqslant n}\frac{(\lambda_s+\rho,\varepsilon_j\pm\varepsilon_i)}{(\rho,\varepsilon_j\pm\varepsilon_i)}&=&\prod_{1\leqslant i\leqslant s\atop s+1\leqslant j\leqslant n}\frac{(j-i+1)(j+i-1)(k+1)^2}{(j-i)(j+i-2)}\\
        	&=&\frac{n(n+s-1)!(k+1)^{2s(n-s)}}{s(n-s)!(2s-1)}.
        \end{eqnarray}
       Therefore
       \begin{eqnarray}
       	d_{2n}(\sum\limits_{i=1}^s\mu_i\varepsilon_i+\sum\limits_{i=s+1}^n(\mu_{i+1}+1)\varepsilon_i)=\binom{2n}{n-s}(k+1)^{n^2-n}.\label{5.126}
       \end{eqnarray}
       Note that $2\mu_{n+1}=2nk$ is even and (1.142)
       implies a classical combinatorial identity
       \begin{equation}
       	\sum_{s=1}^n(-1)^{n-s}s\binom{n+s(k+1)-1}{2n-1}\binom{2n}{n-s}=n(k+1)^{2n-1}. v\label{4.163}
       \end{equation}

\appendix

\section{Proof of Lemma \ref{L4}}

We will prove Lemma \ref{L4} in Section 2. Recall the convention we made in $(\ref{4.24})-(\ref{con3})$ about the notation $x_{i,j}$ and the first-order differential operators $\mathcal{D}_{i,j}$ defined in $(\ref{d1})-(\ref{d6})$,

\begin{lemma}
	For $0\leqslant |i|< j\leqslant n+1,\;r\in\overline{0,n}$ and $s\in\overline{-(n+1),n+1}$, we have
	\begin{enumerate}[(i)]
		\item $\mathcal{D}_{j,i}(x_{r,s})=\delta_{i,s}x_{r,j}-\delta_{-j,s}x_{r,-i}$.
		\item $\mathcal{D}_{j,j}(x_{r,s})=(\delta_{j,s}-\delta_{-j,s}-\delta_{j-1,r})x_{r,s}$.
		\item Moreover, if $j\leqslant r$, $$\mathcal{D}_{i,j}(x_{r,s})=\delta_{j,s}x_{r,i}-\delta_{-i,s}x_{r,-j},$$
		if $j>r,\;i\geqslant -r$, $$\mathcal{D}_{i,j}(x_{r,s})=\delta_{j,s}x_{r,i}-\delta_{-i,s}x_{r,-j}-x_{r,i}f_{j-1,r}(s),$$
		if $j>r,\;i<-r$,
		$$\mathcal{D}_{i,j}(x_{r,s})=\delta_{j,s}x_{r,i}-\delta_{-i,s}x_{r,-j}-x_{r,i}f_{j-1,r}(s)+x_{r,-j}f_{-i-1,r}(s).$$
	\end{enumerate}
\end{lemma}
	 \begin{proof}
	We prove this lemma case by case.\psp
	
	{\bf Proof of $(i)$.}\pse
	
	Suppose that $s>r$. Note that $x_{r,s}$ is a constant by (\ref{4.24}), thus $\mathcal{D}_{j,i}(x_{r,s})=0$. Since $j>0$, $\delta_{-j,s}=0$. If $i\neq s$, then $\delta_{i,s}=0$ and $(i)$ holds. If $i=s$, then $j>s$ and $x_{r,j}$=0, $(i)$ hols as well.
	
	Suppose that $s\in\overline{-r,r}$.In this situation, $x_{r,s}$ is a variable according to (\ref{4.24}) thus $\mathcal{D}_{j,i}(x_{r,s})$ are just the coefficient polynomial of $\partial_{x_{r,s}}$ in the expression of $\mathcal{D}_{j,i}$. Applying (\ref{d2}) and (\ref{d4}) directly, we obtain $(i)$.
	
	Suppose that $s\in\overline{-(n+1),-(r+1)}$. We prove this case by backward induction on $s$. If $s=-(r+1)$, according to (\ref{con2}),
	\begin{eqnarray}
		\mathcal{D}_{j,i}(x_{r,-(r+1)})&=&-\mathcal{D}_{j,i}\big(\frac{1}{2}\sum_{t=-r}^rx_{r,t}x_{r,-t}\big)=-\sum_{t=-r}^r\mathcal{D}_{j,i}(x_{r,t})x_{r,-t}\nonumber\\
		&=&\left\lbrace \begin{array}{ll}
			x_{r,-i},&\text{if}\;j=r+1;\\
			0,&\text{otherwise}.
		\end{array}\right.
	\end{eqnarray} 
	This coincides with $(i)$ when $s=-(r+1)$. Consider $s<-(r+1)$. By (\ref{con3}) and induction assumption,
	\begin{eqnarray}
		& &\mathcal{D}_{j,i}(x_{r,s})=-\mathcal{D}_{j,i}\big(\sum_{t=s+1}^{-s-1}x_{r,t}x_{-s-1,-t}\big)\nonumber\\
		&=&-\sum_{t=s+1}^{-s-1}\mathcal{D}_{j,i}(x_{r,t})x_{-s-1,-t}-\sum_{t=s+1}^{-s-1}x_{r,t}\mathcal{D}_{j,i}(x_{-s-1,-t})\nonumber\\
		&=&-\sum_{t=s+1}^{-s-1}\!x_{-s-1,-t}(\delta_{i,t}x_{r,j}\!-\!\delta_{-j,t}x_{r,-i})-\!\sum_{t=s+1}^{-s-1}\!x_{r,t}(\delta_{i,-t}x_{-s-1,j}\!-\!\delta_{j,t}x_{-s-1,-i}).\label{i1}
	\end{eqnarray}
	If $j<-s$,
	\begin{eqnarray}
		(\ref{i1})=-x_{-s-1,-i}x_{r,j}+x_{-s-1,j}x_{r,-i}-x_{r,-i}x_{-s-1,j}+x_{r,j}x_{-s-1,-i}=0.\label{i2}
	\end{eqnarray}
	If $j=-s$, $x_{r,j}=0$ and $x_{-s-1,j}=1$, then
	\begin{eqnarray}
		(\ref{i1})=-x_{-s-1,-i}x_{r,j}-x_{r,-i}x_{-s-1,j}=-x_{r,-i}.\label{i3}
	\end{eqnarray}	 	
	If $j>-s$, $x_{r,j}=x_{-s-1,j}=0$, then
	\begin{eqnarray}
		(\ref{i1})=-x_{-s-1,-i}x_{r,j}-x_{r,-i}x_{-s-1,j}=0.\label{i4}
	\end{eqnarray}
	(\ref{i2})-(\ref{i4}) coincide with $(i)$ when $s<-(r+1)$, thus $(i)$ is proved. \psp
	
	{\bf Proof of $(ii)$.}\pse

	Now we turn to prove $(ii)$. By the similar argument with the proof of $(i)$, we can prove $(ii)$ holds for $s>-r$. 
	
	Suppose that $s\in\overline{-(n+1),-(r+1)}$. By (\ref{con2}),
	\begin{eqnarray}
		\mathcal{D}_{j,j}(x_{r,-(r+1)})=-\sum_{t=-r}^r\mathcal{D}_{j,j}(x_{r,t})x_{r,-t}=\left\lbrace \begin{array}{ll}
			-2x_{r,-(r+1)},&\text{if}\;j=r+1,\\
			0,&\text{otherwise},
		\end{array}\right.
	\end{eqnarray}
	which coincides with $(ii)$ when $s=-(r+1)$. Assume $s<-(r+1)$, by induction,
	\begin{eqnarray}
		& &\mathcal{D}_{j,j}(x_{r,s})=-\mathcal{D}_{j,j}\big(\sum_{t=s+1}^{-s-1}x_{r,t}x_{-s-1,-t}\big)\nonumber\\
		&=&-\sum_{t=s+1}^{-s-1}\mathcal{D}_{j,j}(x_{r,t})x_{-s-1,-t}-\sum_{t=s+1}^{-s-1}x_{r,t}\mathcal{D}_{j,j}(x_{-s-1,-t})\nonumber\\
		&=&-\sum_{t=s+1}^{-s-1}(\delta_{j,t}-\delta_{-j,t}-\delta_{j-1,r}-\delta_{j,-t}+\delta_{j,t}+\delta_{j-1,-s-1})x_{r,t}x_{-s-1,-t}\nonumber\\
		&=&\sum_{t=s+1}^{-s-1}(\delta_{j-1,r}+\delta_{j-1,-s-1})x_{r,t}x_{-s-1,-t}\nonumber\\
		&=&-(\delta_{j-1,r}+\delta_{j,-s})x_{r,s}.\label{ii1}
	\end{eqnarray}
	Since $\delta_{j,s}=0$, (\ref{ii1}) coincides with $(ii)$ when $s<-(r+1)$. \psp
	
	{\bf Proof of $(iii)$.}\pse
	
	We can still prove $(iii)$ for $s>-r$ or $j\leqslant r$ similarly to the formal cases. Suppose that $s\in\overline{-(n+1),-(r+1)}$ and $j>r$. 
	
	If $i\geqslant -r$,
	\begin{eqnarray}
		\mathcal{D}_{i,j}(x_{r,-(r+1)})&=&-\sum_{t=-r}^r\mathcal{D}_{i,j}(x_{r,t})x_{r,-t}\nonumber\\
		&=&-\sum_{t=-r}^r\big(\delta_{j,t}x_{r,i}-\delta_{-i,t}x_{r,-j}-x_{r,i}f_{j-1,r}(t)\big)x_{r,-t}. \label{iii1}
	\end{eqnarray}
	Note that
	\begin{eqnarray}
		\sum_{t=-r}^r\delta_{j,t}x_{r,-t}=0\;\;\;\;\;\text{and}\;\;\;\;\sum_{t=-r}^r\delta_{-i,t}x_{r,-t}=\left\lbrace \begin{array}{ll}
			x_{r,i},&\text{if}\;i\in\overline{-r,r},\\
			0,&\text{otherwise}.
		\end{array}\right.\label{iii2}
	\end{eqnarray}
	By Lemma \ref{L2},
	\begin{eqnarray}
		\sum_{t=-r}^rf_{j-1,r}(t)x_{r,-t}=-x_{r,-j}-f_{j-1,r}(-(r+1)),
	\end{eqnarray}
	Thus
	\begin{eqnarray}
		(\ref{iii1})&=&-x_{r,i}(x_{r,-j}+f_{j-1,r}(-(r+1)))+x_{r,-j}\sum_{t=-r}^r\delta_{-i,t}x_{r,-t}\nonumber\\
		&=&\left\lbrace \begin{array}{ll}
			-x_{r,i}f_{j-1,r}(-(r+1)),&\text{if}\;i\in\overline{-r,r},\\
			-f_{j-1,r}(-(r+1))-x_{r,-j},&\text{if}\;i=r+1,\\
			0,&\text{if}\;i>r+1.
		\end{array}\right.\label{iii4}
	\end{eqnarray}
	
	If $i<-r$,
	\begin{eqnarray}
		\mathcal{D}_{i,j}(x_{r,-(r+1)})&=&-\sum_{t=-r}^r\mathcal{D}_{i,j}(x_{r,t})x_{r,-t}\nonumber\\
		&=&\sum_{t=-r}^r\big(x_{r,i}f_{j-1,r}(t)+
		-x_{r,-j}f_{-i-1,r}(t)\big)x_{r,-t}\nonumber\\
		&=&-x_{r,i}f_{j-1,r}(-(r+1))+x_{r,-j}f_{-i-1,r}(-(r+1)), \label{iii5}
	\end{eqnarray}
	the last equality is obtained by Lemma \ref{L2}. Thus $(iii)$ holds for $s=-(r+1)$ by (\ref{iii4}) and (\ref{iii5}). 
	
	Assume $s<-(r+1)$. By (\ref{con3}),
	\begin{eqnarray}
		\mathcal{D}_{i,j}(x_{r,s})&=&-\sum_{t=s+1}^{-s-1}\mathcal{D}_{i,j}(x_{r,t}x_{-s-1,-t})-\sum_{t=s+1}^{-s-1}x_{r,t}\mathcal{D}_{i,j}(x_{-s-1,-t}).\label{iii6}
	\end{eqnarray}
	
	By Lemma \ref{L2} and induction assumption, if $j\in\overline{r+1,-s-1},\;i\in\overline{-r,j-1}$,
	\begin{eqnarray}
		(\ref{iii6})&=&-\sum_{t=s+1}^{-s-1}\big[\delta_{j,t}x_{r,i}-\delta_{-i,t}x_{r,-j}-x_{r,i}f_{j-1,r}(t)\big]x_{-s-1,-t}\nonumber\\
		&&-\sum_{t=s+1}^{-s-1}\big[\delta_{j,-t}x_{-s-1,i}-\delta_{i,t}x_{-s-1,-j}\big]x_{r,t}\nonumber\\
		&=&-x_{r,i}x_{-s-1,-j}\!+\!x_{r,-j}x_{-s-1,i}\!-\!x_{r,i}f_{j-1,r}(s)\!-\!x_{-s-1,i}x_{r,-j}\!+\!x_{-s-1,-j}x_{r,i}\nonumber\\
		&=&-x_{r,i}f_{j-1,r}(s);   \label{iii7}		
	\end{eqnarray}
	if $j\in\overline{-s,n+1},\;i\in\overline{-r,-s-1}$,
	\begin{eqnarray}
		(\ref{iii6})&=&-\sum_{t=s+1}^{-s-1}\big[\delta_{j,t}x_{r,i}-\delta_{-i,t}x_{r,-j}-x_{r,i}f_{j-1r}(t)\big]x_{-s-1,-t}\nonumber\\
		&&-\sum_{t=s+1}^{-s-1}\big[\delta_{j,-t}x_{-s-1,i}-\delta_{i,t}x_{-s-1,-j}-x_{-s-1,i}f_{j-1,-s-1}(-t)\big]x_{r,t}\nonumber\\
		&=&x_{r,-j}x_{-s-1,i}\!-\!x_{r,i}(f_{j-1,r}(s)\!+\!x_{-s-1,-t})\!+\!x_{-s-1,-j}x_{r,i}\!-\!x_{-s-1,i}x_{r,-j}\nonumber\\
		&=&-x_{r,i}f_{j-1,r}(s);   \label{iii8}	
	\end{eqnarray}
	if $j\in\overline{-s,n+1},\;i\in\overline{-s,j-1}$,
	\begin{eqnarray}
		(\ref{iii6})&=&-\sum_{t=s+1}^{-s-1}\big[\delta_{j,t}x_{r,i}-\delta_{-i,t}x_{r,-j}-x_{r,i}f_{j-1r}(t)\big]x_{-s-1,-t}\nonumber\\
		&&-\sum_{t=s+1}^{-s-1}\big[\delta_{j,-t}x_{-s-1,i}-\delta_{i,t}x_{-s-1,-j}-x_{-s-1,i}f_{j-1,-s-1}(-t)\big]x_{r,t}\nonumber\\
		&=&-x_{-s-1,i}x_{r,-j}=\left\lbrace \begin{array}{ll}
			x_{r,-j},&\text{if}\;i=-s,\\
			0,&\text{if}\;i>-s.
		\end{array}\right.   \label{iii9}	
	\end{eqnarray}
    Therefore, the second equation in $(iii)$ hold for $s<-(r+1)$.
    
    If $j\in\overline{r+1,-s-1},\;i\in\overline{-j+1,-r-1}$,
    \begin{eqnarray}
    	(\ref{iii6})&=&-\sum_{t=s+1}^{-s-1}\big[\delta_{j,t}x_{r,i}\!-\!\delta_{-i,t}x_{r,-j}\!-\!x_{r,i}f_{j-1,r}(t)\!+\!x_{r,-j}f_{-i-1,r}(t)\big]x_{-s-1,-t}\nonumber\\
    	&&-\sum_{t=s+1}^{-s-1}\big[\delta_{j,-t}x_{-s-1,i}-\delta_{i,t}x_{-s-1,-j}\big]x_{r,t}\nonumber\\
    	&=&-x_{r,i}x_{-s-1,-j}+x_{r,-j}x_{-s-1,i}-x_{r,i}f_{j-1,r}(s)+x_{r,-j}f_{-i-1,r}(s)\nonumber\\
    	& &-x_{-s-1,i}x_{r,-j}+x_{-s-1,-j}x_{r,i}\nonumber\\
    	&=&-x_{r,i}f_{j-1,r}(s)+x_{r,-j}f_{-i-1,r}(s)
    	\label{iii10}		
    \end{eqnarray} 
    if $j\in\overline{-s,n+1},\;i\in\overline{s+1,-r-1}$,
    \begin{eqnarray}
    	(\ref{iii6})&=&-\sum_{t=s+1}^{-s-1}\big[\delta_{j,t}x_{r,i}\!-\!\delta_{-i,t}x_{r,-j}\!-\!x_{r,i}f_{j-1,r}(t)\!+\!x_{r,-j}f_{-i-1,r}(t)\big]x_{-s-1,-t}\nonumber\\
    	&&-\sum_{t=s+1}^{-s-1}\big[\delta_{j,-t}x_{-s-1,i}-\delta_{i,t}x_{-s-1,-j}-x_{-s-1,i}f_{j-1,-s-1}(-t)\big]x_{r,t}\nonumber\\
    	&=&x_{r,-j}x_{-s-1,i}-x_{r,i}(f_{j-1,r}(s)+x_{-s-1,-j})+x_{r,-j}f_{-i-1,r}(s)\nonumber\\
    	& &+x_{-s-1,-j}x_{r,i}-x_{-s-1,i}x_{r,-j}\nonumber\\
    	&=&-x_{r,i}f_{j-1,r}(s)+x_{r,-j}f_{-i-1,r}(s)
    	\label{iii11}		
    \end{eqnarray}
    if $j\in\overline{-s,n+1},\;i\in\overline{-j+1,s}$,
    \begin{eqnarray}
    	\!\!\!(\ref{iii6})\!\!&\!=\!\!&\!\!-\!\!\sum_{t=s+1}^{-s-1}\!\!\big[\delta_{j,t}x_{r,i}\!-\!\delta_{-i,t}x_{r,-j}\!-\!x_{r,i}f_{j-1,r}(t)\!+\!x_{r,-j}f_{-i-1,r}(t)\big]x_{-s-1,-t}\nonumber\\
    	&\!\!\!&\!\!-\!\!\sum_{t=s+1}^{-s-1}\!\!\big[\delta_{j,-t}x_{-s-\!1,i}\!-\!\delta_{i,t}x_{-s-\!1,-j}\!-\!x_{-s-\!1,i}f_{j-1,-s-\!1}(\!-t)\!+\!x_{-s-\!1,-j}f_{-i-\!1,-s-\!1}(\!-t)\big]x_{r,t}\nonumber\\
    	&\!=\!&\!\!-x_{r,i}(f_{j-1,r}(s)+x_{-s-1,-j})+x_{r,-j}(f_{-i-1,r}(s)+x_{-s-1,i})\nonumber\\
    	&\!\!\! &\!\!-x_{-s-1,i}x_{r,-j}+x_{-s-1,-j}x_{r,i}\nonumber\\
    	&\!=\!\!&\!\!\!-x_{r,i}f_{j-1,r}(s)+x_{r,-j}f_{-i-1,r}(s)
    	  \label{iii12}		
    \end{eqnarray}
    Therefore, the last equation in $(iii)$ hold for $s<-(r+1)$.
\end{proof}\pse


\begin{thebibliography}{99}
       	
      	
\bibitem{A} A. Alex, M. Kalus, A. Huckleberry, T. Alan and J. von Delft, A numerical algorithm for the explicit calculation of $SU(N)$ and $SL(N,\mbb C)$ Clebsch-Gordan coefficients, {\it J. Math. Phys.} {\bf 52} (2011), no. 2, 023507, 21pp.

\bibitem{B} C. Bohning and H. Graf von Bothmer, A Clebsch-Gordan formula for $SL_3(\mbb C)$ and applications to rationality,
{\it Adv. Math.} {\bf 224} (2010), no. 1, 246--259.

\bibitem{C} I. V. Cherednik, A new interpretation of Gelfand-Tsetlin bases, {\it Duke Math. J. } {\bf 54}(1987), 563--577.

\bibitem{dS} J. J. de Swart, The octet model and its Clebsch-Gordan coefficients, {\it Rev. Mod. Phys.} {\bf 35} (1963), no. 4, 916--939.


\bibitem{E} A. R. Edmonds, {\it Angular Momentum in Quantum Mechanics,} Princeton University Press, Princeton, New Jersey, 1957.


\bibitem{FGR1} V. M. Futorny, D. Grantcharov and L. E. Ramirez, Singular Gelfand-Tsetlin modules of $gl(n)$, {\it Adv. Math.} {\bf 290} (2016), 453--482.

\bibitem{FGR2} V. M. Futorny, D. Grantcharov and L. E. Ramirez, New Singular Gelfand-Tsetlin $gl(n)$-modules of index 2, {\it
Commun. Math. Phys.} {\bf 355} (2017), 1209--1241.

\bibitem{GT1} I. M. Gelfand and M. L. Tsetlin, Finite-dimensional representations of the group of unimodular matrices, {\it Dokl. Akad. Nauk SSSR} {\bf 71} (1950), 825--828.

\bibitem{GT2} I. M. Gelfand and M. L. Tsetlin, Finite-dimensional representations of the group of orthogonal matrices, {\it Dokl. Akad. Nauk SSSR} {\bf 71} (1950), 1017--1020.

\bibitem{G1} M. D. Gould, On the matrix elements of the $U(n)$ generators, {\it J. Math. Phys.} {\bf 22} (1981), 15--22.

\bibitem{G2} M. D. Gould, Wigner coefficients for a semisimple Lie group and the matrix elements of the $O(n)$ generators,
{\it J. Math. Phys.} {\bf 22} (1981), 2376--2388.

\bibitem{G3} M. D. Gould, Representation theory of the symplectic groups. I,
{\it J. Math. Phys.} {\bf 30} (1989), 1205--1218.


\bibitem{GK} M. D. Gould and E. G. Kalnins, A projection-based solutions to the $Sp(2n)$ state labeling problem,,
{\it J. Math. Phys.} {\bf 30} (1989), 1205--1218.

 \bibitem{H}J. E. Humphreys. {\it Introduction to lie algebras and representation theory}, Graduate Texts in Mathematics
9, Springer, 1972.

\bibitem{K} V. Kac, {\it Infinite dimensional Lie algebras,}
Birkhauser, Boston, 1983.

\bibitem{KZ} V. G. Knizhnik and A. B. Zamolodchikov, Current algebra and Wess-Zumino models in two dimensions, {\it Nucl. Phy. B} {\bf 247} (1984), 83--103.

\bibitem{L} X. Li and J. Paldus, Unitary group tensor operator algebras for many-electron systems. I: Clebsch-Gordan and Racah coefficients, {\it J. Math. Chem.} {\bf 4} (1990), no. 1-4, 295--253.

\bibitem{Lp}{}P. Littelmann. An algorithm to compute bases and representation matrices for $sl_{n+1}$ representations,
{\it J. Pure  Appl. Algebra} {\bf 117/118} (1997),  447--468.

\bibitem{M1} A. I. Molev, Weight bases of Gelfand-Tsetlin type for representations of classical Lie algebras,
       	{\it J. Phys. A: Math. Gen.} {\bf 33} (2000), 4143--4158.
       	
\bibitem{M2} A. I. Molev, Yangian and transvector algebras, {\it Discr. Math}. {\bf 246} (2002), 231-253.
       	
\bibitem{M3} A. I. Molev, Gelfand-Tsetlin basis for representations of Yangians, {\it Lett. Math. Phys.} {\bf 30} (1994), 53-60.
       	
\bibitem{M4} A. I. Molev, A basis for representations of even orthogonal Lie algebras, {\it Adv. Studies in Pure Math.} {\bf 28} (2002), 223-242.

\bibitem{Mi} Macdonald, I. G., Affine root systems and Dedekind's
$\eta$-function, {\it Invent. Math.} {\bf 15} (1972), 91-143.

\bibitem{N1} J. G. Nagel and M. Moshinsky, Operators that lows or raise the irreducible vector spaces of $U_{n-1}$ contained in an irreducible vector space of $U_n$. {\it J. Math. Phys.} {\bf 6} (1995), 682-694.

\bibitem{P1} S. C. Pang and K. T. Hecht, Lowering and raising operators for the orthogonal group in the chain $O(n)\supset  O(n-1)\supset \cdots$, and their graphs. {\it J. Math. Phys.} {\bf 8} (1967), 1233-1251.
    	

\bibitem{R} J. Ramos, Differential operators in terms of Clebsch-Gordan coefficients and the wave equation of massless tensor fields, {\it Gen. Ralativity Gravitations} {\bf 38} (2006), no. 5. 773--783.

\bibitem{Ri}J. Riordan, {\it Combinatorial identities}, Robert E. Krieger Publishing Company, 1979.

\bibitem{S} G. Shen, Graded modules of graded Lie algebras of Cartan
type (I)---mixed product of modules, {\it Science in China A} {\bf
29} (1986), 570--581.

\bibitem{TK} A. Tsuchiya and Y. Kanie,  Vertex operators in conformal field theory on ${\bf P}^1$ and monodromy representations of braid group, {\it Adv. St. Pure Math.} {\bf 16} (1988), 297--372.

\bibitem{W1} M. K. F. Wong, Representations of the orthogonal group. I. Lowering and raising operators operators of the orthogonal group and matrix elements of the generators. {\it J. Math. Phys.} {\bf b} (1967), 1899-1911.
     	
       	
\bibitem{X1} X. Xu, {\it Introduction to Vertex Operator Superalgebras and Their Modules}, Kluwer Academic Publishers, Dordrecht/Boston/London, 1998.
       	
\bibitem{X4} X. Xu, Differential invariants of classical groups, {\it Duke J. Math.} {\bf 94} (1998), no. 3, 543--572.
       	
\bibitem{X3} X. Xu, Differential operator representation of $S_n$ and singular vectors in Verma modules, {\it Algebr. Represent. Theory} {\bf 15} (2012), 211--231.
       	
\bibitem{X2} X. Xu, {\it Representations of Lie Algebras and Partial
       		Differential Equations}, Springer, Singapore, 2017.
       	
\bibitem{XZ}  X. Xu and Y. Zhao,  Extensions of the conformal representations for orthogonal Lie algebras,
 {\it J. Algebra} {\bf 377} (2013), 97-124.

\bibitem{DPZ1} D. P. Zhelobenko, The classical groups. Spectral analysis of their finite-dimensional representations, {\it Russ. Math. Surv} {\bf 17} (1962), 1-94.
       	
\bibitem{DPZ2} D. P. Zhelobenko, On Gelfand-Zetlin bases for classical Lie algebra, in "Representations of Lie groups and Lie algebra" (A. A. Kirilloc, Ed.) pp. 79-106. Budapest: Akademiai Kiado 1988.
       	
       \end{thebibliography}
\end{document}